\documentclass[a4paper,12pt]{amsart}
\usepackage{graphicx, amsfonts} 

\usepackage{amsmath}
\usepackage{amsthm}
\usepackage{amssymb}
\usepackage{amscd}
\usepackage{color}
\usepackage[linktocpage]{hyperref}

\usepackage{comment}

\usepackage[T1]{fontenc}
\usepackage{libertine}

\usepackage{tikz}
\usetikzlibrary{intersections,calc,arrows.meta,cd}

\setlength{\textwidth}{15.0truecm}
\setlength{\textheight}{22.5truecm}
\setlength{\topmargin}{0mm}
\setlength{\oddsidemargin}{0.3cm}
\setlength{\evensidemargin}{0.3cm}

\usepackage{here}

\newtheorem{thm}{Theorem}[section]
\newtheorem{cor}[thm]{Corollary}
\newtheorem{lem}[thm]{Lemma}
\newtheorem{prop}[thm]{Proposition}

\theoremstyle{definition}
\newtheorem{defn}[thm]{Definition}
\newtheorem{exam}[thm]{Example}
\newtheorem{conven}[thm]{Convention}
\newtheorem{rem}[thm]{Remark}
\newtheorem{conj}[thm]{Conjecture}

\numberwithin{equation}{section}

\def\NN{\mathbb N}
\def\ZZ{\mathbb Z}
\def\QQ{\mathbb Q}

\def\cS{\mathcal{S}}
\def\cT{\mathcal{T}}
\def\cR{\mathcal{R}}
\def\flS{\mathcal{S}^\flat}
\def\flR{\mathcal{R}^\flat}
\def\shS{\mathcal{S}^\sharp}
\def\shR{\mathcal{R}^\sharp}
\def\cU{\mathcal{U}}
\def\cV{\mathcal{V}}
\def\cW{\mathcal{W}}

\def\bb{\mathbf b}
\def\0{\mathbf 0}


\def\PSL{\operatorname{PSL}}

\def\Tr{\operatorname{tr}}

\def\cl{\mathsf{cl}}

\def\<{\langle}
\def\>{\rangle}

\def\Oguz{Kantarcı O\u{g}uz }

\title[$q$-Transposes of matrices and $q$-deformed rationals]{Transposes in the $q$-deformed modular group and their applications to $q$-deformed rational numbers}

\author[X. Ren]{Xin Ren}
\address{Osaka Central Advanced Mathematical Institute (OCAMI), Osaka Metropolitan University, 3-3-138 Sugimoto, Sumiyoshi-ku Osaka 558-8585, Japan}
\email{xin-ren@omu.ac.jp, xinren1213@gmail.com}

\author[K. Yanagawa]{Kohji Yanagawa} 
\address{Department of Mathematics, 
Kansai University, Osaka, 564-8680, Japan.}
\email{yanagawa@kansai-u.ac.jp}

\subjclass[2020]{05A30, 11A55, 11F06, and 57K14}
\keywords{$q$-deformed rational numbers, continued fractions, modular group, Jones polynomials, rational knots, and circular fence posets.}

\begin{document}

\begin{abstract}
The (right) $q$-deformed rational numbers was introduced by Morier-Genoud and Ovsienko, and its left variant, whose numerators and denominators are essentially the normalized Jones polynomials of rational links, by Bapat, Becker and Licata. These notions are based on continued fractions and the $q$-deformed modular group $\PSL_q(2,\ZZ)$-actions. In this paper, we introduce the \textit{$q$-transpose} 
for matrices in $\PSL_q(2,\ZZ)$ to refine the basic perspective of the theory. For example, we present a new proof and a refinement of a theorem of Leclere and Morier-Genoud stating that the trace of $A \in \PSL(2,\ZZ)$ is always palindromic and sign coherent. 
We also show arithmetic/combinatorial results on left $q$-deformed rationals (e.g., the criterion for their  palindromicity). Finally, we discuss the connection to the conjecture of Kantarcı O\u{g}uz on circular fence posets. 
\end{abstract}

\maketitle

\section{Introduction}
For an irreducible fraction $\frac{r}{s}$ and a formal parameter $q,$ the \textit{right $q$-deformed rational number} $\left[\displaystyle\frac{r}{s}\right]^\sharp_q$ is defined as a rational function in $\ZZ(q)$. This notion was originally introduced by Morier-Genoud and Ovsienko \cite{MO1, MOV} via continued fractions and the $q$-deformed modular group $\mathrm{PSL}_q(2,\ZZ)$-action. 
Here $\mathrm{PSL}_q(2,\ZZ)$ is a quotient group of a subgroup of $\mathrm{GL}(2,\ZZ[q^{\pm 1}])$. 
They further extended this notion to arbitrary real numbers \cite{MO2}, and Gaussian integers \cite{O}.  
Inspired by their works, Bapat, Becker and Licata \cite{BBL} gave another $q$-deformation $\left[\displaystyle\frac{r}{s}\right]^\flat_q$ of $\frac{r}s$, which is called the \textit{left $q$-deformed rational number}. These notions are related to many directions including 2-Calabi-Yau categories~\cite{BBL}, the Markov-Hurwitz approximation theory \cite{ SM, LMOV, XR}, the modular group \cite{LM,MOV}, Jones polynomial of rational knots \cite{KW, KR, MO1, BBL, XR2} and combinatoris of posets \cite{TBEC1,  OR, KO25}. 

For a matrix $A \in \mathrm{PSL}_q(2,\ZZ)$, we define two operations the {\it $q$-transpose} $A^{T_q}$ and the {\it orthogonal $q$-transpose} $A^{O_q}$ as follows. Let 
$$
\begin{pmatrix}
\cR(q) & \mathcal{V}(q) \\
\cS(q) &  \mathcal{U}(q) \\
\end{pmatrix}$$
be a matrix whose entries are elements in $\ZZ[q^{\pm 1}]$, we define
$$
A^{T_q}:=\begin{pmatrix}
\cR(q) & q^{-1}\cS(q) \\
q \mathcal{V}(q) &  \mathcal{U}(q) \\
\end{pmatrix}
\quad 
\text{and} \quad 
A^{O_q}:=\begin{pmatrix}
\mathcal{U}(q^{-1}) & q^{-1}\mathcal{V}(q^{-1}) \\
q \cS(q^{-1}) &  \cR(q^{-1}) \\
\end{pmatrix}.
$$
It can be checked that  $A \in \mathrm{PSL}_q(2,\ZZ)$, if and only if $A^{T_q} \in \mathrm{PSL}_q(2,\ZZ)$, if and only if $A^{O_q} \in \mathrm{PSL}_q(2,\ZZ)$ (see Lemmas~\ref{q-transpose}~and~\ref{orth & double q-transpose}). 

As for applications of these operations, we give some new proofs of existing results. For a polynomial $f(q)\in\mathbb{Z}[q]$, we write its \textit{reciprocal polynomial} by $f(q)^{\vee}$, that is, 
\[ f(q)^{\vee}:=q^{\mathsf{deg}(f)}f(q^{-1}).\] Moreover, we say $f(q)$ is  {\it palindrmoic} if $f(q)=f(q)^{\vee}.$ 
A theorem of  Leclere and Morier-Genoud (\cite[Theorem~3]{LM}) states that, for $A \in \PSL_q(2,\ZZ)$, the trace $\Tr A$ is a palindromic polynomial with non-negative coefficients after $\pm q^{N}$ times for some integer $N$. We give a new proof of this fact in Theorem~\ref{Tr A}, and a refinement in Proposition~\ref{3 types for trace}.

Some arithmetic properties on the right $q$-deformed rational numbers were recently studied in \cite{KMRWY}. 
In this paper, we also give their left versions as follows. For any irreducible fraction $\frac{r}{s},$ we write its left and right $q$-deformed rational numbers as 
$$
\left[\displaystyle\frac{r}{s}\right]^\flat_q=\frac{\flR_{\frac{r}{s}}(q)}{\flS_{\frac{r}{s}}(q)}, \qquad  \left[\displaystyle\frac{r}{s}\right]^\sharp_q=\frac{\shR_{\frac{r}{s}}(q)}{\shS_{\frac{r}{s}}(q)},$$
respectively, where
$\flR_{\frac{r}{s}}(q), \  \shR_{\frac{r}{s}}(q) \in \mathbb{Z}[q^{\pm 1}]$, $\flS_{\frac{r}{s}}(q), \ \shS_{\frac{r}{s}}(q)  \in \mathbb{Z}[q],$
$\flR_{\frac{r}{s}}(1)=\shR_{\frac{r}{s}}(1)=r$, and  $\flS_{\frac{r}{s}}(1)=\shS_{\frac{r}{s}}(1)=s.$
While we only treat the denominator $\flS_{\frac{r}{s}}(q)$ below, the similar hold for the numerator $\flR_{\frac{r}{s}}(q)$. 

\begin{thm}[see Theorem~\ref{arithmetic of flat}]
For irreducible fractions $\frac{r}s, \frac{r'}s$ with $rr' \equiv 1 \pmod{s}$ {\rm (}resp. $rr' \equiv -1 \pmod{s}${\rm )},  we have $\flS_{\frac{r}s}(q)=\flS_{\frac{r'}{s}}(q)$ {\rm (}resp. $\flS_{\frac{r}s}(q)=\flS_{\frac{r'}{s}}(q)^\vee${\rm )}.
\end{thm}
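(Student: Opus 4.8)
The plan is to translate the purely arithmetic statement into a single matrix identity in $\PSL_q(2,\ZZ)$ and then read off the denominators. The starting point is the observation that, after setting $q=1$, the passage $\frac rs\mapsto\frac{r'}s$ with $rr'\equiv 1\pmod s$ is realized by the anti-transpose $\left(\begin{smallmatrix} a&b\\ c&d\end{smallmatrix}\right)\mapsto\left(\begin{smallmatrix} d&b\\ c&a\end{smallmatrix}\right)$: if $M_{\frac rs}=\left(\begin{smallmatrix} r&b\\ s&d\end{smallmatrix}\right)$ has determinant $1$, then $rd\equiv 1\pmod s$, so $d\equiv r^{-1}\pmod s$, and the anti-transpose has first column $\left(\begin{smallmatrix} d\\ s\end{smallmatrix}\right)$, i.e.\ it is exactly $M_{\frac{r'}s}$. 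Since the orthogonal $q$-transpose $A^{O_q}$ specializes at $q=1$ to this anti-transpose, I expect the theorem to follow from a $q$-deformed identity of the form $M^\flat_{\frac{r'}s}=(M^\flat_{\frac rs})^{O_q}$, read off on the entry carrying $\flS$.

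Concretely, I would first fix the matrix model of the left $q$-deformed rationals: the matrix $M^\flat_{\frac rs}\in\PSL_q(2,\ZZ)$ attached to the continued fraction of $\frac rs$, with its entries normalized so that one distinguished entry is $\flS_{\frac rs}(q)\in\ZZ[q]$ and another is $\flR_{\frac rs}(q)$, with $\flS_{\frac rs}(1)=s$. Next I would record that the $q$-transpose is an anti-automorphism: writing $A^{T_q}=D\,A^{T}D^{-1}$ with $D=\mathrm{diag}(q^{-1/2},q^{1/2})$ shows $(AB)^{T_q}=B^{T_q}A^{T_q}$ (and similarly for $O_q$), so these operations reverse products of generators. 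The heart of the argument is then an induction on the length of the continued fraction: having checked on the generators of $\PSL_q(2,\ZZ)$ that $O_q$ sends the generator for a partial quotient to the generator appearing in the reversed expansion, the anti-automorphism property upgrades this to the identity $M^\flat_{\frac{r'}s}=(M^\flat_{\frac rs})^{O_q}$ (up to the scalar ambiguity of $\PSL_q$).

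With that identity in hand, comparing the $\flS$-entries of the two sides yields the case $rr'\equiv 1$. For the case $rr'\equiv -1$ I would use that $r'':=s-r'$ satisfies $rr''\equiv -1\pmod s$ and that $\frac{r''}s=1-\frac{r'}s$; the reflection $x\mapsto 1-x$ is implemented by a single explicit generator whose effect on the denominator is precisely $\flS\mapsto\flS^\vee$, so composing it with the previous case produces the reciprocal polynomial. The appearance of $f(q)^\vee=q^{\deg f}f(q^{-1})$ is traced directly to the substitution $q\mapsto q^{-1}$ built into $O_q$.

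The step I expect to be the main obstacle is the normalization bookkeeping, not the structural identity. In $\PSL_q(2,\ZZ)$ the entries of $(M^\flat_{\frac rs})^{O_q}$ are only defined up to an overall sign and a power of $q$, whereas $\flS_{\frac rs}(q)$ is the specific representative lying in $\ZZ[q]$ with $\flS_{\frac rs}(1)=s$. Pinning down this representative---so that the $q$-power twists $q^{\pm 1}$ and the substitution $q\mapsto q^{-1}$ in $O_q$ combine to give clean equality in the $+1$ case and exactly $\flS^\vee$ in the $-1$ case---requires keeping careful track of the degrees of the entries along the continued fraction, and of the left-versus-right ($q\mapsto q^{-1}$) normalization relating $\flS$ to $\shS$. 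This degree/sign accounting, rather than the combinatorial reversal, is where the real work lies.
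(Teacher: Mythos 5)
Your argument collapses at its foundational step: there is no matrix in $\PSL_q(2,\ZZ)$ having $\flS_{\frac rs}(q)$ as an entry. By Proposition~\ref{column of a general element}, the columns (and, via the $q$-transpose, the rows) of any element of $\PSL_q(2,\ZZ)$ are, up to units $\pm q^n$, vectors of \emph{right} $q$-deformed numerators and denominators $\shR,\shS$. The left quantities arise only after acting on the deformed base point $\frac{1}{1-q}$ instead of $\frac10$, so they mix the two columns: in the notation of the proof of Theorem~\ref{Tr A}, $\flS_{\frac rs}(q)=q\shS_{\frac rs}(q)+(1-q)\shS_{\frac tu}(q)$, which is in general none of the four entries of $M_q(a_1,\dots,a_{2m})$ even up to units (for $\frac{11}{8}$ the paper computes $\flS_{\frac{11}{8}}(q)=q^5+2q^4+q^3+2q^2+q+1$ of degree $5$, whereas $\shS_{\frac{11}{8}}(q)$ has degree $4$). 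So the ``matrix model $M^\flat_{\frac rs}$ with a distinguished $\flS$-entry'' that you want to fix cannot live in $\PSL_q(2,\ZZ)$, and any matrix that does contain $\flS$ as an entry lies outside the group, where the reversal identity for $O_q$ and its closure property are unavailable. This is not normalization bookkeeping; it is the crux.

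Worse, for the honest $\PSL_q$ matrices your $O_q$-computation yields the \emph{opposite} sign pattern to the one claimed. Reading the first column of $A^{O_q}=\left(\begin{smallmatrix}\cU(q^{-1}) & q^{-1}\cV(q^{-1})\\ q\cS(q^{-1}) & \cR(q^{-1})\end{smallmatrix}\right)$ against Proposition~\ref{column of a general element} gives $\shS_{\frac us}(q)\equiv \shS_{\frac rs}(q^{-1})$ with $ru\equiv 1\pmod s$: the congruence $+1$ produces the reciprocal and $-1$ produces equality. That is precisely the known \emph{right}-deformed statement from \cite{KMRWY}, and the introduction stresses that the left and right deformations behave oppositely in exactly this respect. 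The entire content of the theorem is how to cross from $\sharp$ to $\flat$, and the paper does this by a different mechanism: the identity $\flR_\alpha(q)=J_\alpha(q)^\vee$ (Theorem~\ref{J-L}) combined with Schubert's classification of rational links (Theorem~\ref{Schubert}) gives Lemma~\ref{arithmetic of flat lem} for the numerators, and the inversion formula of Proposition~\ref{basic for flat}(3) transfers it to the denominators $\flS$. Your reduction of the $rr'\equiv-1$ case to the $rr'\equiv 1$ case via $x\mapsto 1-x$ is sound (it is Proposition~\ref{s+t=r}), but the $+1$ case itself is where your route fails, and no amount of degree or sign accounting will repair it.
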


\begin{thm}[see Theorem~\ref{palin}]
For an irreducible fraction $\frac{r}s$, $\flS_{\frac{r}s}(q)$ is palindromic if and only if $r^2 \equiv -1 \pmod{s}$. 
\end{thm}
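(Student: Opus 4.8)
The plan is to deduce both implications from the arithmetic theorem (Theorem~\ref{arithmetic of flat}), so that palindromicity of $\flS_{\frac{r}{s}}(q)$ becomes a purely number-theoretic condition on $r$ modulo $s$. The only genuinely new ingredient beyond Theorem~\ref{arithmetic of flat} will be a converse statement, namely that the polynomial $\flS_{\frac{r}{s}}(q)$ determines $r$ modulo $s$ up to inversion; establishing this is where I expect the real work to lie.

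For the ``if'' direction, suppose $r^2 \equiv -1 \pmod{s}$. Then $\gcd(r,s)=1$, and I apply the second (``$-1$'') case of Theorem~\ref{arithmetic of flat} with $r'=r$, which is legitimate since $r\cdot r = r^2 \equiv -1 \pmod s$. This yields $\flS_{\frac{r}{s}}(q)=\flS_{\frac{r}{s}}(q)^{\vee}$, i.e. $\flS_{\frac{r}{s}}(q)$ is palindromic, as claimed.

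For the ``only if'' direction, assume $\flS_{\frac{r}{s}}(q)$ is palindromic. Since $\gcd(r,s)=1$ there is an irreducible fraction $\frac{r''}{s}$ with $r r'' \equiv -1 \pmod s$. Applying the second case of Theorem~\ref{arithmetic of flat} with the roles $(r'',r)$ (using $r'' r \equiv -1$) gives $\flS_{\frac{r''}{s}}(q)=\flS_{\frac{r}{s}}(q)^{\vee}$, and palindromicity $\flS_{\frac{r}{s}}(q)^{\vee}=\flS_{\frac{r}{s}}(q)$ then produces the identity $\flS_{\frac{r''}{s}}(q)=\flS_{\frac{r}{s}}(q)$. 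It remains to conclude from this equality that $r \equiv r'' \pmod{s}$, for then $r^2 \equiv r r'' \equiv -1 \pmod s$ and we are done.

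The last step is the crux: I must show that $\flS_{\frac{r}{s}}(q)=\flS_{\frac{a}{s}}(q)$ forces $a \equiv r$ or $a r \equiv 1 \pmod s$, which is the converse of the first (``$+1$'') case of Theorem~\ref{arithmetic of flat}. I would isolate this as a separate lemma. My intended approach is to exploit that $\flS_{\frac{r}{s}}(q)$ is the $q$-deformed continuant attached to a continued-fraction expansion of $\frac{r}{s}$: one recovers the expansion from the polynomial up to reversal (by reading off the degree and peeling off partial quotients recursively), and reversal of a continued fraction corresponds exactly to inversion modulo $s$. Granting this, applying it with $a=r''$ gives either $r'' \equiv r$, whence $r^2 \equiv -1 \pmod s$, or $r'' \equiv r^{-1}$, i.e. $r r'' \equiv 1 \pmod s$; combined with $r r'' \equiv -1 \pmod s$ the latter forces $s \mid 2$, and for $s \in \{1,2\}$ the congruence $r^2 \equiv -1 \pmod s$ holds trivially. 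Thus the main obstacle is precisely the injectivity-up-to-inversion of the map $r \mapsto \flS_{\frac{r}{s}}(q)$; the forward implications and the bookkeeping with $(\,\cdot\,)^{\vee}$ are routine once Theorem~\ref{arithmetic of flat} is in hand.
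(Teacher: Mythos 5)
Your ``if'' direction is fine and essentially immediate: taking $r'=r$ in the second case of Theorem~\ref{arithmetic of flat}\,(1) gives $\flS_{\frac{r}{s}}(q)=\flS_{\frac{r}{s}}(q)^\vee$ whenever $r^2\equiv-1\pmod s$. The gap is in the ``only if'' direction. There you reduce everything to the claim that $\flS_{\frac{r}{s}}(q)=\flS_{\frac{a}{s}}(q)$ forces $a\equiv r$ or $ar\equiv 1\pmod s$, i.e.\ that $r\mapsto\flS_{\frac{r}{s}}(q)$ is injective up to inversion modulo $s$, and for this key lemma you offer only a sketch (``read off the degree and peel off partial quotients recursively''). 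That is a genuine reconstruction statement about the closure polynomials $\cl\bigl(Q^{\flat,\cS}_{\frac{r}{s}};q\bigr)$: a closure polynomial is a global count of order ideals, and nothing in your outline explains how to recover even the first partial quotient from its coefficient list, nor why two non-reversed quivers of this shape cannot accidentally share a closure polynomial. Nothing of this sort is available in the paper, so as written the hard half of the theorem rests entirely on an unproved (and possibly delicate) assertion.

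The paper's proof needs no injectivity. It first reduces, via \eqref{flat r equiv r'} and Proposition~\ref{basic for flat}\,(3), to the palindromicity of $\flR_{\frac{r}{s}}(q)$ for $\frac{r}{s}>1$ (Lemma~\ref{Palin for flR}). There, identity \eqref{s-t} gives
\[
\flR_{\frac{r}{s}}(q)-\flR_{\frac{r}{s}}(q)^\vee=(q-1)\bigl(\shS_{\frac{r}{s}}(q)-\shS_{\frac{r}{t}}(q)\bigr),
\]
where $t$ is the $(1,2)$-entry of $M(a_1,\ldots,a_{2m})$, so that $st\equiv-1\pmod r$ and $0<s,t<r$. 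Palindromicity is thus equivalent to the vanishing of $\shS_{\frac{r}{s}}(q)-\shS_{\frac{r}{t}}(q)$, and this vanishing is detected by a single evaluation at $q=1$, which returns $s-t$; hence it holds iff $s=t$, which is exactly $s^2\equiv-1\pmod r$. In other words, the only comparison needed is with one specific partner fraction, and equality there is tested by an integer identity rather than by any uniqueness property of the polynomials. If you want to keep your framework, replace your injectivity lemma with this one-point evaluation argument.
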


In \cite{KMRWY}, we showed that $\shS_{\frac{r}s}(q)=\shS_{\frac{r'}{s}}(q)$ {\rm (}resp. $\shS_{\frac{r}s}(q)=\shS_{\frac{r'}{s}}(q)^\vee${\rm )} if $rr' \equiv -1 \pmod{s}$ {\rm (}resp. $rr' \equiv 1 \pmod{s}${\rm )}, and $\shS_{\frac{r}s}(q)$ is palindromic if and only if $r^2 \equiv 1 \pmod{s}.$ So, the left and right $q$-deformed ration numbers behave oppositely in this sense. 

An irreducible fraction $\alpha =\frac{r}s >1$ determines a rational link $L(\alpha)$ via the continued fraction. We consider the {\it normalized Jones polynomial} $J_{\alpha}(q)\in \ZZ[q]$ of $L(\alpha),$ which can be computed by the left and right $q$-deformed rational numbers \cite{KR,MO1, BBL, XR2}. The simplest expression is $J_{\alpha}(q)=\flR_\alpha(q)^\vee$, and hence $J_{\alpha}(q)$ is palindromic if and only if $r^2 \equiv -1 \pmod{s}$.  
When $J_{\alpha}(q)$ is {\it not} palindromic, set
$$
I_\alpha(q) :=\pm q^{N} \frac{J_\alpha(q)^\vee-J_\alpha(q)}{1-q}
$$  
for some $N \in \ZZ$, where we take $\pm q^{N}$ so that $I_\alpha(q)\in \ZZ[q]$ with $I_\alpha(0)>0.$ Then we have that $I_\alpha(q)$ equals $\Tr(A)$ for some $A \in \mathrm{PSL}_q(2,\ZZ).$ In particular, $I_\alpha(0)=1$ and $I_\alpha(q)\in \ZZ_{\ge 0}[q].$ Moreover, we show that \Oguz\!'s conjecture \cite[Conjecture~1.4]{KO25} on circular fence posets implies a conjecture on $I_\alpha(q)$ (Conjecture~\ref{Oguz}). In particular, $I_\alpha(q)$ is conjectured to be at most bimodal 
(unimodal in most cases).


\section{The left and right \texorpdfstring{$q$}{q}-deformed rationals and  \texorpdfstring{${\rm PSL}_q(2,\ZZ)$}{PSL\_q(2,Z)}}\label{Pre}

\noindent
At the beginning, we briefly recall the definitions of the left and right $q$-deformed rational numbers. For details, see \cite{MO1, MOV, BBL, XR2}. We also give a few supplemental results. 

\subsection{Continued fractions, \texorpdfstring{${\rm PSL}(2,\ZZ)$}{PSL(2,Z)}-actions and its \textit{q}-deformation}

\noindent
In this paper, we regard $0= \frac{0}1$ and $\frac{1}{0}$ as irreducible fractions.  For an irreducible fraction $\frac{r}{s} \in \QQ $, we always assume that $s$ is positive.    
An irreducible fraction $\frac{r}{s} \in \QQ \cup \left\{ \frac{1}{0} \right\}$ has unique regular and negative continued fraction expansions as follows:\\
\[ \displaystyle
\frac{r}{s}=a_1+\frac{1}{a_2+\cfrac{1}{\ddots+\cfrac{1}{a_{2m}}}}
=c_1-\frac{1}{c_2-\cfrac{1}{\ddots-\cfrac{1}{c_k}}}
\]
\\
with the following setting. (The latter is often called the {\it Hirzebruch-Jung continued fraction}.)
If $\frac{r}{s}$ is positive, then we set
$a_1 \in \mathbb{Z}_{\geq 0}$, $a_i \in \mathbb{Z}_{\geq 1} $ ($i\geq2$), and $c_1 \in \mathbb{Z}_{\geq 1}$ and $c_j \in \mathbb{Z}_{\geq 2} $ ($j\geq2$). If $\frac{r}{s}$ is negative, then we set $a_1 \in \mathbb{Z}_{\leq 0}$, $a_i \in \mathbb{Z}_{\leq -1} $ ($i\geq2$), and $c_1 \in \mathbb{Z}_{\leq -1}$ and $c_j \in \mathbb{Z}_{\leq -2} $ ($j\geq2$). We write $[a_1,\ldots, a_{2m}]$ and $[[c_1,\ldots, c_k]]$ for these expansions, respectively. As exceptional cases, we set $\frac{0}{1}=[-1,1]=[[1,1]]$ and 
$\frac{1}{0}=[\; ]=[[\; ]]$ (i.e., empty expansions). For a regular continued fraction of odd length, we use the symbol $[a_1, \ldots, a_{2m+1}]$ also. Note that $[a_1,\ldots, a_{n}+1]=[a_1,\ldots, a_{n},1]$, and this is the reason we can assume that the length is even. 

Consider the group $\mathrm{SL}(2,\mathbb{Z})=\left\langle R,S \right\rangle=\left\langle R,L \right\rangle,$ where
$$
R:=\begin{pmatrix}
1 & 1 \\
0 & 1 \\
\end{pmatrix}, \qquad
S:=\begin{pmatrix}
0 & -1 \\
1 & 0 \\
\end{pmatrix}, \qquad
L:=\begin{pmatrix}
1 & 0 \\
1 & 1 \\
\end{pmatrix}.
$$
This group (and its quotient $\mathrm{PSL}(2,\mathbb{Z})$) acts on $\QQ \cup \left\{\frac{1}{0}\right\}$ by the fractional linear transformation:
\[
\displaystyle
\begin{pmatrix}
a & b \\
c & d \\
\end{pmatrix}\left(\frac{r}{s}\right)=\frac{ar+bs}{cr+ds},\]
where
$\displaystyle
\begin{pmatrix}
a & b \\
c & d \\
\end{pmatrix} \in \mathrm{SL}(2,\mathbb{Z})$ and $\frac{r}{s}\in \mathbb{Q}\cup \left\{\frac{1}{0} \right\}$. Hence, a rational number $\frac{r}{s}=[a_1,\ldots,a_{2m}]=[[c_1,\ldots, c_k]]$ can be expressed by the following formulas:
\begin{align}
\frac{r}{s}&=M(a_1,\ldots, a_{2m})\left(\frac{1}{0}\right):=R^{a_1}L^{a_2}R^{a_3}L^{a_4}\cdots R^{a_{2m-1}} L^{a_{2m}}\left(\frac{1}{0}\right);\label{R_r_over_s-1}\\
\frac{r}{s}&=M^{-}(c_1,\ldots, c_k)\left(\frac{1}{0}\right):=R^{c_1}SR^{c_2}S\cdots R^{c_{k}}S\left(\frac{1}{0}\right)\label{R_r_over_s-2}.
\end{align}

\begin{lem}\label{reduced form}
Let $\frac{r}s=[a_1, \ldots, a_{2m}]>0$ be an irreducible fraction with $s \ge 2$. For $\begin{pmatrix}
r & v \\
s & u \\
\end{pmatrix} \in \mathrm{SL}(2,\mathbb{Z})$, the following are equivalent.  
\begin{itemize}
\item[(1)]  $M(a_1,\ldots, a_{2m})=\begin{pmatrix}
r & v \\
s & u \\
\end{pmatrix}$.  
\item[(2)] $0 < u < s$.
\end{itemize}
\end{lem}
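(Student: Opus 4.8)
\medskip

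The plan is to treat the two implications separately, putting the real work into (1)$\Rightarrow$(2) and deducing (2)$\Rightarrow$(1) from a uniqueness argument. The starting point I would use is the algebraic identity
\[
M(a_1,\dots,a_{2m})=\prod_{i=1}^{2m}\begin{pmatrix} a_i & 1\\ 1 & 0\end{pmatrix},
\]
which I would establish by writing $\begin{pmatrix} a & 1\\ 1 & 0\end{pmatrix}=R^aP$ with $P=\begin{pmatrix}0&1\\1&0\end{pmatrix}$, observing $P^2=I$ and $PR^aP=L^a$, and then collapsing the alternating product $R^{a_1}PR^{a_2}P\cdots R^{a_{2m}}P$ into $R^{a_1}L^{a_2}\cdots R^{a_{2m-1}}L^{a_{2m}}$ (the $2m$ copies of $P$ pair up because the length is even). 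By the standard matrix expression of Euler's continuants $K(\cdot)$, the right-hand side has bottom row $s=K(a_2,\dots,a_{2m})$, $u=K(a_2,\dots,a_{2m-1})$ and top-left entry $r=K(a_1,\dots,a_{2m})$, consistent with \eqref{R_r_over_s-1}.

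For (1)$\Rightarrow$(2) I would then prove $0<u<s$. Positivity is immediate: a continuant of a (possibly empty) string of positive integers is positive, so $u\ge 1$; alternatively $\det M=1$ forces $ru\equiv 1\pmod s$, whence $u\not\equiv 0$. The upper bound $u<s$ I would extract from the continuant recursion $K(a_2,\dots,a_{2m})=a_{2m}\,K(a_2,\dots,a_{2m-1})+K(a_2,\dots,a_{2m-2})$, i.e. $s=a_{2m}u+K(a_2,\dots,a_{2m-2})$ with $a_{2m}\ge 1$ and the trailing continuant $\ge 0$; since $u>0$ this already gives $s\ge u$, and strictness follows from either $a_{2m}\ge 2$ or a strictly positive trailing continuant.

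The delicate point — and the step I expect to be the main obstacle — is the single degenerate case $m=1$, $a_{2m}=1$, where $u=1$, $s=K(a_2)=a_2=1$, and the bound $u<s$ genuinely fails. This is exactly the case excluded by the hypothesis $s\ge 2$, so I would isolate it and invoke $s\ge2$ there. Some care is also needed with the empty/short continuant conventions (e.g. $u=K(\,)=1$ when $m=1$, and the trailing $K_{-1}=0$) so that the boundary bookkeeping is correct.

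Finally, for (2)$\Rightarrow$(1) I would argue by uniqueness. Any two matrices in $\SL(2,\ZZ)$ sharing the first column $\binom{r}{s}$ differ by right multiplication by some $R^n$ (since $A^{-1}A'$ fixes $\binom{1}{0}$ and hence equals $R^n$), so their $(2,2)$-entries run over a single residue class modulo $s$, namely $r^{-1}\bmod s$, which is nonzero because $\gcd(r,s)=1$ and $s\ge 2$. Thus exactly one matrix with first column $\binom{r}{s}$ has its $(2,2)$-entry in $(0,s)$. By the already-proved direction this matrix is $M(a_1,\dots,a_{2m})$, so any $\begin{pmatrix} r & v\\ s & u\end{pmatrix}\in\SL(2,\ZZ)$ with $0<u<s$ must coincide with it, giving (1).
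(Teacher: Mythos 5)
Your proof is correct and takes essentially the same route as the paper's: the paper establishes (1)$\Rightarrow$(2) by an (unspecified) induction on $m$, which your continuant recursion simply makes explicit, including the degenerate case excluded by $s\ge 2$. Your (2)$\Rightarrow$(1) is the identical uniqueness argument used in the paper, namely that the $(2,2)$-entries of matrices in $\SL(2,\ZZ)$ with first column $\begin{pmatrix} r\\ s\end{pmatrix}$ form a single residue class modulo $s$ with exactly one representative in $(0,s)$.
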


\begin{proof}
First, note that since $r, s>0$ and $ru-sv=1$, if the condition (2) holds then $r > v >0$.   
(1) $\Rightarrow$ (2): We can prove this by induction on $m$.  (2) $\Rightarrow$ (1): Any element in $\mathrm{SL}(2,\mathbb{Z})$ of the form $\begin{pmatrix}
r & * \\
s & * \\
\end{pmatrix}$ satisfies  $\begin{pmatrix}
r & v+nr \\
s & u+ns \\
\end{pmatrix}$ for some $n \in \ZZ$. There is only one $n$ satisfying $0 < u +ns <s$. 
\end{proof}

We first consider the $q$-deformation of \eqref{R_r_over_s-1}.

\begin{prop}[{\cite[Proposition 1.1]{LM}}]\label{def of q-rat}
Let $q$ be a formal parameter, and consider the matrices 
\[\displaystyle
R_q:=\begin{pmatrix}
q & 1 \\
0 & 1 \\
\end{pmatrix}, \ \ \ \ 
S_q=\begin{pmatrix}
0 & -q^{-1} \\
1 & 0 \\
\end{pmatrix}. \ \ \ \
\]
The subgroup $\<R_q, S_q \>$ of $\operatorname{GL}(2,\ZZ[q^{\pm 1}])$ generated by $R_q$ and $S_q$ contains $\pm q^{\pm 1} \mathrm{Id}$. We set  $$\PSL_q\left(2,\ZZ\right) :=\< R_q, S_q \>/\left\{\pm q^n \mathrm{Id}\mid n\in \mathbb{Z}\right\}.$$
Then, the assignment 
$$
R \longmapsto R_q, \qquad S \longmapsto S_q
$$
induces an isomorphism from $\mathrm{PSL}(2,\mathbb{Z})$ to $\mathrm{PSL}_{q}(2,\mathbb{Z})$.
\end{prop}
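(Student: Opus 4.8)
The plan is to realize the stated assignment as an isomorphism by combining the classical presentation of $\PSL(2,\ZZ)$ with a retraction coming from the specialization $q=1$. The whole argument rests on two scalar identities that I would compute first.

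First I would record, by direct multiplication, that $S_q^2 = -q^{-1}\,\mathrm{Id}$ and $(S_qR_q)^3 = -\mathrm{Id}$ (equivalently $(R_qS_q)^3 = -\mathrm{Id}$). The first identity already shows $-q^{-1}\mathrm{Id} \in \<R_q,S_q\>$, and multiplying it by $-\mathrm{Id} = (S_qR_q)^3$ yields $q^{-1}\mathrm{Id}$, hence $q^{\pm1}\mathrm{Id} \in \<R_q,S_q\>$ as well. Since $-\mathrm{Id}$ and $q\,\mathrm{Id}$ generate the group $\{\pm q^n\mathrm{Id}\mid n\in\ZZ\}$ of scalars, this proves the first assertion; as these scalars are central, the quotient defining $\PSL_q(2,\ZZ)$ makes sense.

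Next, to build the homomorphism, I would invoke the classical presentation $\PSL(2,\ZZ) = \<\,\overline S, \overline R \mid \overline S^{\,2} = (\overline S\,\overline R)^3 = 1\,\>$, where $\overline S, \overline R$ are the images of $S, R$. The two computations above say precisely that $\overline{S_q}$ and $\overline{S_q R_q}$ satisfy these defining relations in $\PSL_q(2,\ZZ)$, because $S_q^2$ and $(S_qR_q)^3$ are scalar matrices and hence trivial in the quotient. By the universal property of a presentation, there is therefore a well-defined homomorphism $\phi\colon \PSL(2,\ZZ)\to\PSL_q(2,\ZZ)$ with $\overline R\mapsto\overline{R_q}$ and $\overline S\mapsto\overline{S_q}$, and it is surjective because its image contains the generators $\overline{R_q},\overline{S_q}$.

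The crux is injectivity, which I expect to be the only delicate point and which I would obtain via the substitution $q=1$. Entrywise evaluation at $q=1$ is a ring homomorphism $\ZZ[q^{\pm1}]\to\ZZ$, inducing $\GL(2,\ZZ[q^{\pm1}])\to\GL(2,\ZZ)$; since $\det R_q = q$ and $\det S_q = q^{-1}$ both specialize to $1$, it restricts to a homomorphism $\<R_q,S_q\>\to\SL(2,\ZZ)$ sending $R_q\mapsto R$ and $S_q\mapsto S$. The step requiring care is that this descends to the quotient: the scalar subgroup $\{\pm q^n\mathrm{Id}\}$ is carried into $\{\pm\mathrm{Id}\}$, which is exactly the kernel of $\SL(2,\ZZ)\to\PSL(2,\ZZ)$, so composing with the projection produces a homomorphism $\psi\colon\PSL_q(2,\ZZ)\to\PSL(2,\ZZ)$. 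Then $\psi\circ\phi$ fixes $\overline R$ and $\overline S$, hence is the identity on $\PSL(2,\ZZ)$, forcing $\phi$ to be injective; together with surjectivity this gives the desired isomorphism.
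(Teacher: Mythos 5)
Your proof is correct and complete: the identities $S_q^2=-q^{-1}\mathrm{Id}$ and $(S_qR_q)^3=-\mathrm{Id}$ do hold, they give both the scalar subgroup and the relations needed to invoke the presentation $\PSL(2,\ZZ)\cong\ZZ/2*\ZZ/3$, and the specialization $q=1$ provides the retraction that forces injectivity. The paper itself offers no proof here --- it quotes the result from Leclere and Morier-Genoud --- and your argument is essentially the standard one used in that reference, so there is nothing to flag.
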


In the following, by abuse of notation, we sometimes identify $A \in \PSL_q\left(2,\ZZ\right)$  with an explicit matrix in $\mathrm{GL}(2,\ZZ[q^{\pm 1}])$ giving $A$.

We consider the matrix $$L_q:=q^{-1}R_qS_qR_q=\begin{pmatrix}
1 & 0 \\
1 & q^{-1} \\
\end{pmatrix},$$ then $R_q$ and $L_q$ also generate $\mathrm{PSL}_{q}(2,\mathbb{Z})$, and we set $$M_q(a_1,\ldots, a_{2m}):=R_q^{a_1}L_q^{a_2}R_q^{a_3}L_q^{a_4}\cdots R_q^{a_{2m-1}} L_q^{a_{2m}} \in \mathrm{GL}\left(2,\ZZ[q^{\pm 1}]\right).$$

\subsection{The left and right \textit{q}-deformed rational numbers}

\noindent
We recall the definition of the left and right \textit{q}-deformed rational numbers. We follow the notation in \cite{BBL}.
\begin{defn}[c.f. \cite{BBL}]

We set
\[
\displaystyle
 \ \ \ 
\left[\frac{0}{1}\right]^{\flat}_q=\frac{1-q^{-1}}{1}, \ \ \ 
\left[\frac{0}{1}\right]^{\sharp}_q=\frac{0}{1};\ \ \
\left[\frac{1}{0}\right]^{\flat}_q=\frac{1}{1-q}, \ \ \ 
\left[\frac{1}{0}\right]^{\sharp}_q=\frac{1}{0}. \ \ \ 
\]

For an irreducible fraction $\frac{r}{s}=[a_1,\ldots,a_{2m}] \in \mathbb{Q}$, 
then the {\it left $q$-deformed rational number} $\left[\displaystyle\frac{r}{s}\right]^\flat_q$ is defined by
\[\displaystyle
\left[\displaystyle\frac{r}{s}\right]^\flat_q:=M_q(a_1,\ldots, a_{2m})\left(\left[\frac{1}{0}\right]^{\flat}_q\right)=M_q(a_1,\ldots, a_{2m})\left(\frac{1}{1-q}\right),
\]
and the {\it right $q$-deformed rational number} 
$\left[\displaystyle\frac{r}{s}\right]^\sharp_q$ is defined by
\[\displaystyle
\left[\displaystyle\frac{r}{s}\right]^\sharp_q:=M_q(a_1,\ldots, a_{2m})\left(\left[\frac{1}{0}\right]^{\sharp}_q\right)=M_q(a_1,\ldots, a_{2m})\left(\frac{1}{0}\right).
\]
Multiplying both the numerator and denominator by 
$ \pm q^n$ for suitable $n \in \ZZ$, we have 
$$
\left[\displaystyle\frac{r}{s}\right]^\flat_q=\frac{\flR_{\frac{r}{s}}(q)}{\flS_{\frac{r}{s}}(q)}, \qquad 
\left[\displaystyle\frac{r}{s}\right]^\sharp_q=\frac{\shR_{\frac{r}{s}}(q)}{\shS_{\frac{r}{s}}(q)}, 
$$
where
$$  \flR_{\frac{r}{s}}(q), \ \shR_{\frac{r}{s}}(q)  \in \mathbb{Z}[q^{\pm 1}], \  \flS_{\frac{r}{s}}(q),  \shS_{\frac{r}{s}}(q)  \in \mathbb{Z}[q]$$
with $\flS_{\frac{r}{s}}(0)=\shS_{\frac{r}{s}}(0)=1$. 
\end{defn}

We always have 
\begin{itemize}
\item[(1)]$ \flR_{\frac{r}{s}}(1)=\shR_{\frac{r}{s}}(1) =r$ and $ \flS_{\frac{r}{s}}(1)=\shS_{\frac{r}{s}}(1) =s$.   
\item[(2)] $\flR_{\frac{r}{s}}(q)$ and $\flS_{\frac{r}{s}}(q)$ are coprime, and the same is true for 
$\shR_{\frac{r}{s}}(q)$ and $\shS_{\frac{r}{s}}(q)$. 
\end{itemize}
It is known that if $\frac{r}{s}>1$, then  
$ \flR_{\frac{r}{s}}(q), \ \shR_{\frac{r}{s}}(q) \in \ZZ[q]$ with  $ \flR_{\frac{r}{s}}(0)=\shR_{\frac{r}{s}}(0) =1.$\\ 

Easy calculation shows that $\flS_{\frac{n}1}(q)=\shS_{\frac{n}1}(q)=1$ for all $n \in \ZZ$, and 
$$
\flR_{\frac{n}1}(q) = \begin{cases} 
q^{n}+q^{n-2}+q^{n-3}+\cdots+q+1 & (n >0) \\
1-q^{-1} & (n=0)\\
-q^{-n-1}-q^{-n+1}-\cdots-q^{-1} & (n<0), 
\end{cases}
$$
while $\shR_{\frac{n}1}(q)$
coincides with the Euler $q$-integer $[n]_q=\dfrac{1-q^n}{1-q}$. 

We can use negative continued fractions to compute the $q$-deformed rational numbers.

\begin{prop}[c.f.  \cite{MO1,LM}]\label{negative version}
For the irreducible fraction $\frac{r}{s}=[[c_1,\ldots,c_k]]>0$, we consider the matrix 
$$M^{-}_q(c_1,\ldots, c_k):=R_q^{c_1}S_qR_q^{c_2}S_q\cdots R_q^{c_{k}}S_q \in \mathrm{GL}\left(2,\mathbb{Z}[q^{\pm 1}]\right),$$ then we have
$$
M^{-}_q(c_1,\ldots, c_k)=\begin{pmatrix}
\shR_{\frac{r}{s}}(q) & \mathcal{W}(q) \\
\shS_{\frac{r}{s}} (q) & \mathcal{X}(q) \\
\end{pmatrix}.
$$

Hence, the right $q$-deformed rational number of $\frac{r}{s}$ is 
\[\displaystyle
\left[\displaystyle\frac{r}{s}\right]^\sharp_q=M_q^{-}(c_1,\ldots, c_k)\left(\frac{1}{0}\right)=\frac{\shR_{\frac{r}{s}}(q)}{\shS_{\frac{r}{s}}(q)}.
\]
\end{prop}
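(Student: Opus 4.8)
The plan is to prove the displayed matrix identity by showing that the first column of $M^-_q(c_1,\ldots,c_k)$ is \emph{exactly} $\begin{pmatrix}\shR_{\frac{r}{s}}(q)\\ \shS_{\frac{r}{s}}(q)\end{pmatrix}$; the final assertion about $\left[\frac{r}{s}\right]^\sharp_q$ is then immediate, since applying the fractional linear transformation to $\frac{1}{0}$ returns precisely the ratio of the first column. I would separate the argument into two independent parts: (A) the ratio of the two first-column entries equals $\left[\frac{r}{s}\right]^\sharp_q$, and (B) this first column is already in the normalized coprime form that defines $\shR_{\frac{r}{s}}$ and $\shS_{\frac{r}{s}}$.

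For (A), recall from \eqref{R_r_over_s-1} and \eqref{R_r_over_s-2} that both $M(a_1,\ldots,a_{2m})$ and $M^-(c_1,\ldots,c_k)$ send $\frac{1}{0}$ to $\frac{r}{s}$ under the $\PSL(2,\ZZ)$-action, so they differ by right multiplication by an element of the stabilizer of $\frac{1}{0}$, which is $\langle R\rangle$; that is, $M^-(c_1,\ldots,c_k)=M(a_1,\ldots,a_{2m})R^{n}$ in $\PSL(2,\ZZ)$ for some $n\in\ZZ$. Transporting this identity through the isomorphism of Proposition~\ref{def of q-rat} (which sends $R\mapsto R_q$, $S\mapsto S_q$, hence $L\mapsto L_q$) yields $M^-_q(c_1,\ldots,c_k)=\pm q^{j}\,M_q(a_1,\ldots,a_{2m})R_q^{n}$ in $\GL(2,\ZZ[q^{\pm1}])$ for some $j\in\ZZ$. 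The central factor $\pm q^{j}\mathrm{Id}$ acts trivially, and $R_q^{n}$ fixes $\frac{1}{0}$ because its lower-left entry is $0$; applying both sides to $\frac{1}{0}$ gives $M^-_q(c_1,\ldots,c_k)\left(\frac{1}{0}\right)=M_q(a_1,\ldots,a_{2m})\left(\frac{1}{0}\right)=\left[\frac{r}{s}\right]^\sharp_q$, which is the ratio of the first column of $M^-_q(c_1,\ldots,c_k)$.

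For (B), I would compute $R_q^{c}S_q=\begin{pmatrix}[c]_q & -q^{c-1}\\ 1 & 0\end{pmatrix}$ for $c\ge 1$, where $[c]_q=1+q+\cdots+q^{c-1}$. Since $\frac{r}{s}>0$ forces every $c_i\ge 1$, each factor lies in $M_2(\ZZ[q])$, hence so does the product $M^-_q(c_1,\ldots,c_k)$; writing its first column as $(P(q),Q(q))$ we get $P,Q\in\ZZ[q]$. Specializing each factor at $q=0$ (legitimate because the $q^{-1}$ of $S_q$ has already cancelled in $R_q^{c}S_q$) gives $\left.R_q^{c}S_q\right|_{q=0}=\begin{pmatrix}1&0\\1&0\end{pmatrix}$ for $c\ge 2$ and $\begin{pmatrix}1&-1\\1&0\end{pmatrix}$ for $c=1$ (the latter only possible at $c_1$); multiplying these out shows $Q(0)=1$ in every case. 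Finally $\det(R_q^{c}S_q)=q^{c-1}$, so $\det M^-_q(c_1,\ldots,c_k)=q^{\sum c_i-k}$ is a monomial, whence any common divisor of $P$ and $Q$ divides $q^{\sum c_i-k}$; as $Q(0)=1$ excludes the factor $q$, the pair $P,Q$ is coprime.

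Combining (A) and (B): the first column $(P,Q)$ is a coprime pair in $\ZZ[q]$ with $Q(0)=1$ and $P/Q=\shR_{\frac{r}{s}}(q)/\shS_{\frac{r}{s}}(q)$, and this defining pair is unique up to a unit $\pm q^{m}$ of $\ZZ[q^{\pm1}]$; the conditions $Q,\shS_{\frac{r}{s}}\in\ZZ[q]$ together with $Q(0)=\shS_{\frac{r}{s}}(0)=1$ force the unit to be $1$, so $P=\shR_{\frac{r}{s}}$ and $Q=\shS_{\frac{r}{s}}$, as claimed. I expect the main obstacle to be making the transport step in (A) fully rigorous, namely justifying that the stabilizer ambiguity of $\frac{1}{0}$ in $\PSL(2,\ZZ)$ survives the $q$-deformation only as a harmless power of $R_q$ together with a central factor; the $q=0$ specialization of (B) is routine once it is carried out on the combined blocks $R_q^{c}S_q$ rather than on $S_q$ alone.
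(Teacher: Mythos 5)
Your argument is correct. Note, however, that the paper does not actually prove Proposition~\ref{negative version}: it is stated as a known result imported from \cite{MO1,LM}, so there is no in-paper proof to match. The closest internal analogue is the proof of Proposition~\ref{column of a general element}, which proceeds by showing that the set $\overline{Y}$ of columns of the form $\left[\shR_{\frac{r}{s}}/\shS_{\frac{r}{s}}\right]$ is stable under left multiplication by $R_q$ and $S_q$, quoting the recursion formulas \cite[Proposition~2.8]{LM}; that route gives the statement only up to the equivalence $\equiv$ (a factor $\pm q^n$), which is all the paper ever uses. Your route is genuinely different and more self-contained: part (A) replaces the recursion formulas by the observation that $M^-$ and $M$ differ by an element of the stabilizer of $\tfrac{1}{0}$, namely a power of $R$, which survives $q$-deformation as $\pm q^jR_q^n$ and acts trivially on $\tfrac{1}{0}$; part (B) then pins down the exact (not just projective) equality via the block computation $R_q^cS_q=\left(\begin{smallmatrix}[c]_q & -q^{c-1}\\ 1 & 0\end{smallmatrix}\right)$, the determinant $q^{\sum c_i-k}$ for coprimality, and the $q=0$ specialization for the normalization $Q(0)=1$. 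The one place to be slightly careful is the last step: you should invoke that $\shR_{\frac{r}{s}}$ and $\shS_{\frac{r}{s}}$ are coprime in $\ZZ[q^{\pm1}]$ (property (2) after the definition) so that the two coprime representatives of the same fraction differ by a unit $\pm q^m$ of $\ZZ[q^{\pm1}]$, after which $Q(0)=\shS_{\frac{r}{s}}(0)=1$ forces the unit to be $1$; as written this is present but compressed. With that reading, the proof is complete and arguably sharper than the $\equiv$-level statement the paper's own machinery would yield.
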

\noindent
The first author of the present paper gave the left version of the above proposition (c.f. \cite[Section~2.4]{XR2}).

\medskip

Proposition~\ref{negative version} can be extended as follows.  
The multiplicative group $G:=\{ \pm q^n \mid n \in \ZZ \}$ acts on the set 
$$
X:=
\left\{ \begin{pmatrix}
\mathcal{V} \\
 \mathcal{U} \\
\end{pmatrix} \mid \mathcal{V}, \mathcal{U} \in \ZZ[q^{\pm 1}]
\right\}
$$
in the natural way. 
Let $\overline{X}:= X/G$, and  
$\left[ \dfrac{\cV}{\cU} \right] \in \overline{X}$ denote the orbit of $\begin{pmatrix}
\mathcal{V} \\
 \mathcal{U} \\
\end{pmatrix}$.  
Clearly, $\mathrm{PSL}_q(2,\mathbb{Z})$ acts on $\overline{X}$.  
Consider  the subset 
$$
\overline{Y}:=\left\{ \left[ \frac{\shR_{\frac{r}{s}}(q)}{\shS_{\frac{r}{s}}(q)} \right] \ \middle | \ \frac{r}s \in \QQ \cup \left\{\frac{1}{0}\right\}\right\}
$$
of $\overline{X}$. 
The following seems to be well-known to experts.

\begin{prop}\label{column of a general element} 
With the above notation, the columns of any element of $\mathrm{PSL}_{q}(2,\mathbb{Z})$ belong to $\overline{Y}$. Hence, for 
$$\begin{pmatrix}
\cR(q) & \mathcal{V}(q) \\
\cS(q) &  \mathcal{U}(q) \\
\end{pmatrix} \in \mathrm{PSL}_{q}(2,\mathbb{Z})
$$   
with $r:=\cR(1)$, $s:=\cS(1)$, $v:=\mathcal{V}(1)$ and 
$u:=\mathcal{U}(1)$, we have 
$$
\left[\displaystyle\frac{r}{s}\right]^{\sharp}_q= \left[\frac{\cR(q)}{\cS(q)} \right] \quad \text{and} \quad  \left[\displaystyle\frac{v}{u}\right]^{\sharp}_q= \left[\frac{\mathcal{V}(q)}{\mathcal{U}(q)}\right]
$$
(strictly speaking, if $s <0$, $\frac{r}s$ should be $\frac{-r}{-s}$, and so on). 
\end{prop}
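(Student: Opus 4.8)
The plan is to realize $\overline Y$ as a single $\PSL_q(2,\ZZ)$-orbit, namely the orbit of $\left[\frac{1}{0}\right]$, and then to read off the two columns of an arbitrary $A$ as the images of the standard orbits $\left[\frac{1}{0}\right]$ and $\left[\frac{0}{1}\right]$. All that is really needed about continued fractions is the defining identity $\left[\frac{r}{s}\right]^\sharp_q=M_q(a_1,\dots,a_{2m})\cdot\left[\frac{1}{0}\right]$; the rest is group theory of $\PSL(2,\ZZ)$ transported through the isomorphism of Proposition~\ref{def of q-rat}.

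First I would record two elementary facts. Since $R_q=\begin{pmatrix} q & 1 \\ 0 & 1\end{pmatrix}$ is upper triangular, $R_q\begin{pmatrix}1\\0\end{pmatrix}=q\begin{pmatrix}1\\0\end{pmatrix}$, so $R_q^{\,n}\cdot\left[\frac{1}{0}\right]=\left[\frac{1}{0}\right]$ in $\overline X$ for all $n\in\ZZ$; and $S_q\begin{pmatrix}1\\0\end{pmatrix}=\begin{pmatrix}0\\1\end{pmatrix}$ gives $S_q\cdot\left[\frac{1}{0}\right]=\left[\frac{0}{1}\right]$. By the defining identity above, $\overline Y$ is exactly the image of $A\mapsto A\cdot\left[\frac{1}{0}\right]$ restricted to the matrices $M_q(\text{expansion of }r/s)$; in particular $\overline Y\subseteq\PSL_q(2,\ZZ)\cdot\left[\frac{1}{0}\right]$, and both $\left[\frac{1}{0}\right]$ and $\left[\frac{0}{1}\right]$ lie in $\overline Y$ (take $r/s=\frac10$ and $r/s=\frac01$).

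The crux is the reverse inclusion. Given $A\in\PSL_q(2,\ZZ)$, let $\overline A\in\PSL(2,\ZZ)$ be its image under Proposition~\ref{def of q-rat}, and set $r/s:=\overline A\!\left(\frac10\right)$, whose representing column $\begin{pmatrix}r\\s\end{pmatrix}$ is the first column of $\overline A$, so $r=\cR(1)$, $s=\cS(1)$. Because $\PSL(2,\ZZ)$ acts transitively on $\QQ\cup\{\frac10\}$ with $\operatorname{Stab}\!\left(\frac10\right)=\langle R\rangle$, the matrix $M(\text{expansion of }r/s)$ also sends $\frac10$ to $r/s$, hence $M(\text{expansion of }r/s)^{-1}\overline A$ fixes $\frac10$ and equals $R^{\,n}$ for some $n$; that is, $\overline A=M(\text{expansion of }r/s)\,R^{\,n}$. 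Transporting this relation back through the isomorphism yields $A=M_q(\text{expansion of }r/s)\,R_q^{\,n}$ in $\PSL_q(2,\ZZ)$, so
\[
A\cdot\left[\tfrac{1}{0}\right]=M_q(\cdots)\,R_q^{\,n}\cdot\left[\tfrac{1}{0}\right]=M_q(\cdots)\cdot\left[\tfrac{1}{0}\right]=\left[\tfrac{r}{s}\right]^\sharp_q\in\overline Y .
\]
This proves $\overline Y=\PSL_q(2,\ZZ)\cdot\left[\frac{1}{0}\right]$ and simultaneously identifies the first column $\left[\frac{\cR(q)}{\cS(q)}\right]$ of $A$ with $\left[\frac{r}{s}\right]^\sharp_q$.

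For the second column I would run the identical argument on the matrix $AS_q$: its first column is $AS_q\begin{pmatrix}1\\0\end{pmatrix}=A\begin{pmatrix}0\\1\end{pmatrix}$, the second column of $A$, and the associated fraction is $\overline{AS_q}\!\left(\frac10\right)=\overline A\!\left(\frac01\right)=v/u$ with $v=\mathcal V(1)$, $u=\mathcal U(1)$; hence this column equals $\left[\frac{v}{u}\right]^\sharp_q\in\overline Y$. The normalization caveat for $s<0$ (or $u<0$) is handled by replacing $\frac{r}{s}$ with $\frac{-r}{-s}$, as in our standing convention. The one point demanding care is the passage from $\overline A=M(\cdots)R^{\,n}$ to $A=M_q(\cdots)R_q^{\,n}$: this is legitimate precisely because the correspondence of Proposition~\ref{def of q-rat} is a group isomorphism, so the relation lifts verbatim, and the resulting matrix identity need only hold modulo $G=\{\pm q^n\}$, which is exactly the ambiguity already built into $\overline X$.
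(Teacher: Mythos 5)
Your proof is correct, but it takes a genuinely different route from the paper's. The paper argues by closure under generators: since $R_q$ and $S_q$ generate $\PSL_q(2,\ZZ)$, it suffices to check that left multiplication by each of them preserves $\overline{Y}$, and this is verified using the $q$-deformed recursions
$R_q\left[\shR_{\frac{r}s}/\shS_{\frac{r}s}\right]=\left[\shR_{\frac{r}s+1}/\shS_{\frac{r}s+1}\right]$ and
$S_q\left[\shR_{\frac{r}s}/\shS_{\frac{r}s}\right]=\left[\shR_{-\frac{s}r}/\shS_{-\frac{s}r}\right]$ imported from \cite[Proposition 2.8]{LM}. You instead realize $\overline{Y}$ as the single orbit $\PSL_q(2,\ZZ)\cdot\left[\frac{1}{0}\right]$ via an orbit--stabilizer argument carried out in the classical group ($\operatorname{Stab}\left(\frac{1}{0}\right)=\langle R\rangle$, transitivity from \eqref{R_r_over_s-1}, and $R_q$ fixing $\left[\frac{1}{0}\right]$ in $\overline{X}$ because its first column is $q\,{}^t(1,0)$), then transport the resulting factorization $\overline{A}=M(\cdots)R^{\,n}$ through the isomorphism of Proposition~\ref{def of q-rat}. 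What this buys is independence from the \cite{LM} recursion identities --- the only $q$-input is the defining formula $\left[\frac{r}{s}\right]^\sharp_q=M_q(\cdots)\left(\frac{1}{0}\right)$ together with the compatibility $L\mapsto L_q$ (i.e.\ $L=RSR$ and $L_q\equiv R_qS_qR_q$), and it identifies each column with the fraction $\cR(1)/\cS(1)$ directly rather than by tracking it through an induction on words in the generators; the paper's route is shorter granted that the recursions are already on the table. One point worth making explicit in your write-up: the first column of $M_q(\cdots)$ represents $\left[\shR_{\frac{r}s}/\shS_{\frac{r}s}\right]$ with no hidden cancellation because $\det M_q(\cdots)\equiv \pm q^{N}$ is a unit, so the two entries of each column are coprime in $\ZZ[q^{\pm1}]$; this is implicit in the paper's normalization of $\shR_{\frac{r}{s}}$ and $\shS_{\frac{r}{s}}$.
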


\begin{proof}
Since $R_q$ and $S_q$ generate $\mathrm{PSL}_q(2,\mathbb{Z})$, it suffices to show that $\overline{Y}$ is closed under multiplication by these matrices. 
An easy computation shows that  
$$R_q \left[\frac{\shR_{\frac{r}s}(q)}{\shS_{\frac{r}s}(q)} \right]=\begin{pmatrix}
q & 1 \\
0 & 1 \\
\end{pmatrix}\left[\frac{\shR_{\frac{r}s}(q)}{\shS_{\frac{r}s}(q)} \right]=
\left[\frac{q\shR_{\frac{r}s}(q)+\shS_{\frac{r}s}(q)}{\shS_{\frac{r}s}(q)} \right]
=\left[\frac{\shR_{\frac{r}s+1}(q)}{\shS_{\frac{r}s+1}(q)} \right] \in \overline{Y}, 
$$
where the last equality follows from \cite[Proposition 2.8 (2.6)]{LM}.  
Similarly, we have 
$$S_q \left[\frac{\shR_{\frac{r}s}(q)}{\shS_{\frac{r}s}(q)} \right]=\begin{pmatrix}
0 & -q^{-1} \\
1 & 0 \\
\end{pmatrix}\left[\frac{\shR_{\frac{r}s}(q)}{\shS_{\frac{r}s}(q)} \right]=
\left[\frac{-q^{-1}\shS_{\frac{r}s}(q)}{\shR_{\frac{r}s}(q)} \right]
=\left[\frac{\shR_{-\frac{s}r}(q)}{\shS_{-\frac{s}r}(q)} \right] \in \overline{Y}, 
$$
where the last equality follows from \cite[Proposition 2.8 (2.8)]{LM}.     
\end{proof}

\begin{conven}\label{equiv}
{\normalfont
For $f,g \in \ZZ[q^{\pm 1}]$, if $f=\pm q^n g$ for some $n \in \ZZ$, then we write $f \equiv g$. And for matrices $A,B$, if $A=\pm q^n B$ for some $n \in \ZZ$, then we write $A \equiv B$. }
\end{conven}

\subsection{Closures of a quiver for the  left and right \texorpdfstring{$q$}{q}-deformed rational numbers} 
We recall a combinatorial interpretation of the left and right $q$-deformed rational numbers. For details, see \cite{MO1,BBL,KMRWY, TBEC1}.

A \textit{quiver} $Q=(Q_0,Q_1,s,t)$ consists of two sets $Q_0$, $Q_1$ and two maps $s,t:Q_1\to Q_0$. Each element of $Q_0$ (resp. $Q_1$) is called a vertex (resp. an arrow). For an arrow $\alpha\in Q_1$, we call $s(\alpha)$ (resp. $t(\alpha)$) the source (resp. the target) of $\alpha$. 
We will commonly write 
$\alpha:a\to b$ to indicate that an arrow $\alpha$ has the source $a$ and the target $b$. 
In this paper, we always consider a  \textit{finite} quiver $Q$ (i.e., both two sets $Q_0$ and $Q_1$ are finite sets).
The \textit{opposite quiver} of $Q$, say $Q^\vee$, is defined by $Q^\vee=(Q_0,Q_1,t,s)$.

Let $Q$ be a finite quiver.
A subset $C\subset Q_0$ is a \textit{closure} if there is no arrow $\alpha\in Q_1$ such that $s(\alpha)\in C$ and $t(\alpha)\in Q_0\setminus C$.
A closure $C$ is an \textit{$\ell$-closure} if the number of elements of $C$ is $\ell$.
The number of $\ell$-closures is denoted by $\rho_\ell(Q)$.
Then the polynomial 
\[ \mathsf{cl}(Q;q):=\sum_{\ell=0}^{n}\rho_\ell(Q)q^{\ell}\in\mathbb{Z}[q],\]
where $n=|Q_0|$, is called the \textit{closure polynomial} of $Q$.

Clearly, $C\subset Q_0$ is a closure of $Q$ if and only if $Q_0\setminus C$ is that of $Q^\vee.$ Hence, one has
\begin{equation}\label{op-cl}
\mathsf{cl}\left(Q^\vee;q\right)=\mathsf{cl}\left(Q;q\right)^\vee.
\end{equation}

For a tuple of integers $\mathbf{b}:=(b_1,b_2,\ldots, b_d)$ with $b_1, b_d\geq 0$, $b_2,\ldots,b_{d-1} >0$, 
we set the quiver $Q(\mathbf{b})$ of type $A$ as follows (in this figure,  we assume that $d$ is even):

\begin{figure}[ht]
\begin{center}
\begin{tikzpicture}[x=0.58pt,y=0.6pt,yscale=-1,xscale=0.85]

\draw    (75.8,45.16) -- (36.11,45.41) ;
\draw [shift={(34.11,45.42)}, rotate = 359.64] [color={rgb, 255:red, 0; green, 0; blue, 0 }  ][line width=0.75]    (10.93,-3.29) .. controls (6.95,-1.4) and (3.31,-0.3) .. (0,0) .. controls (3.31,0.3) and (6.95,1.4) .. (10.93,3.29)   ;

\draw    (204.18,45.13) -- (243.8,44.77) ;
\draw [shift={(245.8,44.76)}, rotate = 179.49] [color={rgb, 255:red, 0; green, 0; blue, 0 }  ][line width=0.75]    (10.93,-3.29) .. controls (6.95,-1.4) and (3.31,-0.3) .. (0,0) .. controls (3.31,0.3) and (6.95,1.4) .. (10.93,3.29)   ;

\draw    (186.6,45.16) -- (146.91,45.41) ;
\draw [shift={(144.91,45.42)}, rotate = 359.64] [color={rgb, 255:red, 0; green, 0; blue, 0 }  ][line width=0.75]    (10.93,-3.29) .. controls (6.95,-1.4) and (3.31,-0.3) .. (0,0) .. controls (3.31,0.3) and (6.95,1.4) .. (10.93,3.29)   ;

\draw   (24.8,57) .. controls (24.79,61.67) and (27.12,64) .. (31.79,64.01) -- (99.09,64.06) .. controls (105.76,64.07) and (109.09,66.4) .. (109.09,71.07) .. controls (109.09,66.4) and (112.42,64.07) .. (119.09,64.08)(116.09,64.08) -- (186.39,64.14) .. controls (191.06,64.15) and (193.39,61.82) .. (193.4,57.15) ;

\draw   (197,57.56) .. controls (197.01,62.23) and (199.34,64.56) .. (204.01,64.55) -- (270.51,64.49) .. controls (277.18,64.48) and (280.51,66.81) .. (280.51,71.48) .. controls (280.51,66.81) and (283.84,64.48) .. (290.51,64.47)(287.51,64.48) -- (357.01,64.42) .. controls (361.68,64.41) and (364.01,62.08) .. (364,57.41) ;

\draw    (314.18,44.73) -- (353.8,44.37) ;
\draw [shift={(355.8,44.36)}, rotate = 179.49] [color={rgb, 255:red, 0; green, 0; blue, 0 }  ][line width=0.75]    (10.93,-3.29) .. controls (6.95,-1.4) and (3.31,-0.3) .. (0,0) .. controls (3.31,0.3) and (6.95,1.4) .. (10.93,3.29)   ;

\draw    (417.68,45.16) -- (377.99,45.41) ;
\draw [shift={(375.99,45.42)}, rotate = 359.64] [color={rgb, 255:red, 0; green, 0; blue, 0 }  ][line width=0.75]    (10.93,-3.29) .. controls (6.95,-1.4) and (3.31,-0.3) .. (0,0) .. controls (3.31,0.3) and (6.95,1.4) .. (10.93,3.29)   ;

\draw    (528.48,45.16) -- (488.79,45.41) ;
\draw [shift={(486.79,45.42)}, rotate = 359.64] [color={rgb, 255:red, 0; green, 0; blue, 0 }  ][line width=0.75]    (10.93,-3.29) .. controls (6.95,-1.4) and (3.31,-0.3) .. (0,0) .. controls (3.31,0.3) and (6.95,1.4) .. (10.93,3.29)   ;

\draw   (366.68,57) .. controls (366.67,61.67) and (369,64) .. (373.67,64.01) -- (440.97,64.06) .. controls (447.64,64.07) and (450.97,66.4) .. (450.97,71.07) .. controls (450.97,66.4) and (454.3,64.07) .. (460.97,64.08)(457.97,64.08) -- (528.27,64.14) .. controls (532.94,64.15) and (535.27,61.82) .. (535.28,57.15) ;

\draw    (594.18,45.63) -- (633.8,45.27) ;
\draw [shift={(635.8,45.26)}, rotate = 179.49] [color={rgb, 255:red, 0; green, 0; blue, 0 }  ][line width=0.75]    (10.93,-3.29) .. controls (6.95,-1.4) and (3.31,-0.3) .. (0,0) .. controls (3.31,0.3) and (6.95,1.4) .. (10.93,3.29)   ;

\draw   (587,58.06) .. controls (587.01,62.73) and (589.34,65.06) .. (594.01,65.05) -- (660.51,64.99) .. controls (667.18,64.98) and (670.51,67.31) .. (670.51,71.98) .. controls (670.51,67.31) and (673.84,64.98) .. (680.51,64.97)(677.51,64.98) -- (747.01,64.92) .. controls (751.68,64.91) and (754.01,62.58) .. (754,57.91) ;

\draw    (704.18,45.23) -- (743.8,44.87) ;
\draw [shift={(745.8,44.86)}, rotate = 179.49] [color={rgb, 255:red, 0; green, 0; blue, 0 }  ][line width=0.75]    (10.93,-3.29) .. controls (6.95,-1.4) and (3.31,-0.3) .. (0,0) .. controls (3.31,0.3) and (6.95,1.4) .. (10.93,3.29)   ;

\draw (15.12,40) node [anchor=north west][inner sep=0.75pt]  [font=\Large]  {$\circ $};

\draw (744.97,40) node [anchor=north west][inner sep=0.75pt]  [font=\Large]  {$\circ $};

\draw (75.6,40) node [anchor=north west][inner sep=0.75pt]  [font=\Large]  {$\circ $};

\draw (92.8,40) node [anchor=north west][inner sep=0.75pt]  [font=\large]  {$\cdots $};

\draw (245.48,40) node [anchor=north west][inner sep=0.75pt]  [font=\Large]  {$\circ $};

\draw (184.8,40) node [anchor=north west][inner sep=0.75pt]  [font=\Large]  {$\circ $};

\draw (124.8,40) node [anchor=north west][inner sep=0.75pt]  [font=\Large]  {$\circ $};

\draw (67.2,76.2) node [anchor=north west][inner sep=0.75pt]    {$b_{1} \ \text{left arrows}$};

\draw (236,76.6) node [anchor=north west][inner sep=0.75pt]    {$b_{2} \ \text{right arrows}$};

\draw (263.2,40) node [anchor=north west][inner sep=0.75pt]  [font=\large]  {$\cdots $};

\draw (295.2,40) node [anchor=north west][inner sep=0.75pt]  [font=\Large]  {$\circ $};

\draw (355.48,40) node [anchor=north west][inner sep=0.75pt]  [font=\Large]  {$\circ $};

\draw (417.48,40) node [anchor=north west][inner sep=0.75pt]  [font=\Large]  {$\circ $};

\draw (434.68,40) node [anchor=north west][inner sep=0.75pt]  [font=\large]  {$\cdots $};

\draw (466.68,40) node [anchor=north west][inner sep=0.75pt]  [font=\Large]  {$\circ $};

\draw (409.08,77.1) node [anchor=north west][inner sep=0.75pt]    {$b_{3} \ \text{left arrows}$};

\draw (525.98,40) node [anchor=north west][inner sep=0.75pt]  [font=\Large]  {$\circ $};

\draw (542.7,40) node [anchor=north west][inner sep=0.75pt]  [font=\large]  {$\cdots $};

\draw (635.48,40) node [anchor=north west][inner sep=0.75pt]  [font=\Large]  {$\circ $};

\draw (574.8,40) node [anchor=north west][inner sep=0.75pt]  [font=\Large]  {$\circ $};

\draw (626,77.1) node [anchor=north west][inner sep=0.75pt]    {$b_{d} \ \text{right arrows}$};

\draw (653.2,40) node [anchor=north west][inner sep=0.75pt]  [font=\large]  {$\cdots $};

\draw (685.2,40) node [anchor=north west][inner sep=0.75pt]  [font=\Large]  {$\circ $};

\end{tikzpicture}

\end{center}
\caption{The quiver $Q(\bb).$}
   \label{Q-sharp} 
\end{figure}

For a rational number $\frac{r}{s}=[a_1, a_2, \ldots, a_{2m}]>1$, we set 

\begin{align*}
& Q_{\frac{r}{s}}^{\sharp,\cR} :=Q(a_1-1, a_2, \ldots , a_{2m-1}, a_{2m}-1), \\
& Q_{\alpha}^{\sharp, \cS} :=\left\{\begin{array}{ll}
Q(0,a_2-1, a_3, \ldots , a_{2m-1}, a_{2m}-1)  & \text{if $m>1$,} \\ [5pt]
Q(0,a_2-2)  & \text{if $m=1$.}
\end{array}\right.
\end{align*}
Here, if $a_2=1$ and $m>1$ (resp. $a_2=2$ and $m=1$, $a_2=1$ and $m=1$), we understand that $Q_{\frac{r}{s}}^{\sharp,\mathcal{S}}=Q(a_3, \ldots , a_{2m-1}, a_{2m}-1)$ (resp. $Q_{\frac{r}{s}}^{\sharp,\mathcal{S}}={Q_{\frac{r}{2}}^{\sharp,\mathcal{S}}=}Q(0)$, $Q_{\frac{r}{s}}^{{\sharp}, \mathcal{S}}={Q_{\frac{r}{1}}^{\sharp,\mathcal{S}}=}\varnothing$).
The quiver $Q_{\frac{r}{s}}^{\sharp,\cS}$ is obtained by deleting the first $a_1$ arrows from $Q_{\frac{r}{s}}^{\sharp,\cR}$.

For the right $q$-deformed rational numbers, we have the following result.

\begin{thm}[{\cite[Theorem 4]{MO1}}]\label{theorem:MO closure} 
Let $\frac{r}{s} >1$ be an irreducible fraction.
Then, the following equations hold:
\begin{align}
\mathcal{R}^{\sharp}_{\frac{r}{s}}(q) &=\mathsf{cl}\left(Q_{\frac{r}{s}}^{\sharp,\cR};q\right), \\
\mathcal{S}^{\sharp}_{\frac{r}{s}}(q) &=\mathsf{cl}\left(Q_{\frac{r}{s}}^{\sharp,\cS};q\right).
\end{align}
\end{thm}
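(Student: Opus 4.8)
The plan is to compute the closure polynomial $\mathsf{cl}(Q(\bb);q)$ by a transfer matrix and then match it, block by block, against the product $M_q(a_1,\dots,a_{2m})$, whose first column is $\bigl(\shR_{\frac rs}(q),\,\shS_{\frac rs}(q)\bigr)^{\top}$ up to $\equiv$ (Convention~\ref{equiv}). First I would build $Q(\bb)$ one vertex at a time from the left and record, for the sub-quiver on the first $n$ vertices, the pair $(a_n,b_n)$ where $a_n$ (resp.\ $b_n$) is $\sum_C q^{|C|}$ over the closures $C$ having the $n$-th vertex outside (resp.\ inside) $C$. The defining condition of a closure (no arrow has its source in $C$ and its target outside $C$) shows that appending one more arrow sends $(a_n,b_n)^{\top}$ to $T\,(a_n,b_n)^{\top}$, where
\[
T_{\mathrm{left}}=\begin{pmatrix} 1 & 1 \\ 0 & q \end{pmatrix}, \qquad T_{\mathrm{right}}=\begin{pmatrix} 1 & 0 \\ q & q \end{pmatrix},
\]
according to the direction of the new arrow; the first vertex gives the initial state $(1,q)^{\top}$, and $\mathsf{cl}(Q(\bb);q)=(1\ \ 1)\bigl(\prod T\bigr)(1,q)^{\top}$.

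The decisive step is to identify these combinatorial matrices with the generators $R_q,L_q$ of Proposition~\ref{def of q-rat}. Writing $J=\left(\begin{smallmatrix}0&1\\1&0\end{smallmatrix}\right)$, I would check the one-line identities
\[
T_{\mathrm{left}}=J\,R_q^{\top}\,J, \qquad T_{\mathrm{right}}=q\,J\,L_q^{\top}\,J,
\]
so that $T_{\mathrm{left}}\equiv JR_q^{\top}J$ and $T_{\mathrm{right}}\equiv JL_q^{\top}J$. Since $Q_{\frac rs}^{\sharp,\cR}=Q(a_1-1,a_2,a_3,\dots,a_{2m}-1)$ is built from successive blocks of $a_1-1$ left, $a_2$ right, $a_3$ left, \dots, $a_{2m}-1$ right arrows, running the recursion block by block and cancelling the internal factors $JJ=\mathrm{Id}$ collapses the transfer product to $J\,(M')^{\top}J$ up to $\equiv$, where $M'=R_q^{a_1-1}L_q^{a_2}R_q^{a_3}\cdots L_q^{a_{2m}-1}$; the order reversal produced by $(\,\cdot\,)^{\top}$ is exactly what restores the natural left-to-right order of the generators. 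I note that $M'=R_q^{-1}M_q(a_1,\dots,a_{2m})L_q^{-1}$, so the two $-1$'s on the end blocks correspond precisely to removing one $R_q$ on the left and one $L_q$ on the right.

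It then remains to feed in the boundary vectors. Because $(1\ \ 1)J=(1\ \ 1)$ and $J(1,q)^{\top}=(q,1)^{\top}$, writing $M'=(m_{ij})$ one finds $\mathsf{cl}(Q_{\frac rs}^{\sharp,\cR};q)\equiv q(m_{11}+m_{12})+(m_{21}+m_{22})$; on the other hand, expanding $M_q=R_q\,M'\,L_q$ shows its $(1,1)$-entry is this same expression. Hence $\mathsf{cl}(Q_{\frac rs}^{\sharp,\cR};q)\equiv \shR_{\frac rs}(q)$, and since both are polynomials in $\ZZ[q]$ with constant term $1$ (the empty set is the unique $0$-closure, and $\shR_{\frac rs}(0)=1$ as $\frac rs>1$), the normalization upgrades $\equiv$ to genuine equality. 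I expect this boundary bookkeeping to be the main obstacle: one must track the conjugation by $J$, the $q$-powers accumulated from the $T_{\mathrm{right}}$ factors, and the two stripped end-generators carefully enough to turn $\equiv$ into $=$.

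For the denominator I would run the identical argument on $Q_{\frac rs}^{\sharp,\cS}=Q(0,a_2-1,a_3,\dots,a_{2m}-1)$ (for $m>1$): the empty first block now strips an $L_q$ on both sides instead, and the transfer product matches the closure polynomial with the $(2,1)$-entry of $M_q(a_1,\dots,a_{2m})$ up to $\equiv$; since $R_q^{a_1}$ is upper triangular it leaves the bottom row unchanged, so that entry is exactly $\shS_{\frac rs}(q)$, and normalization again gives equality. Finally, the degenerate cases $a_2=1$ and $m=1$ reduce to the explicit small quivers recorded in the definition of $Q_{\frac rs}^{\sharp,\cS}$, which also supply the base case for an induction on $m$.
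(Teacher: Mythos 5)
The paper does not prove this statement at all: it is quoted verbatim as \cite[Theorem 4]{MO1}, so there is no internal argument to compare against. Your transfer-matrix proof is, as far as I can check, correct and self-contained. The recursion is right: for the left arrow $n+1\to n$ the closure condition forces $n\in C$ whenever $n+1\in C$, giving $(a_{n+1},b_{n+1})=(a_n+b_n,\,qb_n)$, and dually for the right arrow, which yields exactly your $T_{\mathrm{left}}$ and $T_{\mathrm{right}}$; the identities $T_{\mathrm{left}}=JR_q^{\top}J$ and $T_{\mathrm{right}}=qJL_q^{\top}J$ check out, the telescoping of the $J$'s collapses the (order-reversed) transfer product to $q^{N}J(M')^{\top}J$ with $M'=R_q^{-1}M_q(a_1,\dots,a_{2m})L_q^{-1}$, and the boundary contraction $(1\ 1)J(M')^{\top}J(1,q)^{\top}=(R_qM'L_q)_{11}=(M_q)_{11}$ is exactly what is needed; the normalization $\flat$-free argument (both sides in $\ZZ[q]$ with constant term $1$) does upgrade $\equiv$ to $=$. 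The $\cS$-case likewise reduces to $(R_qL_q^{-1}R_q^{-a_1}M_q)_{11}=q(M_q)_{21}$ since $R_qL_q^{-1}=q\left(\begin{smallmatrix}0&1\\-1&1\end{smallmatrix}\right)$ and $R_q^{a_1}$ is unitriangular on the bottom row. Two small points worth writing out if this were to be included: state explicitly the order of the transfer product (last arrow leftmost), since the whole $J$-conjugation bookkeeping hinges on it; and handle the degenerate quivers ($Q(0,0)$ for $\frac{r}{s}\in\ZZ$, the empty quiver for $Q^{\sharp,\cS}_{\frac{r}{1}}$, and the $m=1$, $a_2\in\{1,2\}$ conventions) by direct inspection rather than by an appeal to ``induction on $m$,'' which your main argument does not actually use or need.
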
 

Let $P$ be a finite poset. 
A subset $I$ of $P$ is said to be a {\it lower order ideal}, if  $x  \in I$, $y \in P$ and $y \le x$ imply $y \in I$.  Let $J(P)$ be the set of lower order ideals of $P$. The polynomial 
$$\mathsf{rk}(P; q):= \sum_{I \in J(P)}q ^{|I|}$$
is called the {\it rank polynomial} of $P$.

We can regard (the vertex set $Q_0$ of) the quiver  $Q(\bb)$ as a poset as follows; for $a, b \in Q_0$, $a < b$ if and only if there is an arrow $b \to a$. Clearly, $I \subset Q(\bb)$ is a closure if and only if it is a lower order ideal, and hence 
$$\mathsf{rk}(Q(\bb); q)=\mathsf{cl}(Q(\bb);q).$$

For the left $q$-deformed rational numbers, we also have a corresponding quiver. 

For a tuple of integers $\bb:=(b_1,b_2,\ldots, b_d)$ with $b_1, b_d\geq 0$, $b_2,\ldots,b_{d-1} >0$, 
we define the quiver $Q^\flat(\bb)$ adding a cycle to the right side of the quiver $Q(\bb)$ as follows (here we assume that $d$ is even again).
\begin{figure}[ht]
\begin{tikzpicture}[x=0.525pt,y=0.65pt,yscale=-1,xscale=0.85]

\draw    (764.86,40.26) .. controls (785.7,-2.33) and (806.41,15.57) .. (818.31,38.2) ;
\draw [shift={(819.21,39.95)}, rotate = 243.39] [color={rgb, 255:red, 0; green, 0; blue, 0 }  ][line width=0.75]    (10.93,-3.29) .. controls (6.95,-1.4) and (3.31,-0.3) .. (0,0) .. controls (3.31,0.3) and (6.95,1.4) .. (10.93,3.29)   ;

\draw    (817.94,53.13) .. controls (795.9,79.96) and (778.25,75.91) .. (767.22,53.34) ;
\draw [shift={(766.38,51.57)}, rotate = 65.55] [color={rgb, 255:red, 0; green, 0; blue, 0 }  ][line width=0.75]    (10.93,-3.29) .. controls (6.95,-1.4) and (3.31,-0.3) .. (0,0) .. controls (3.31,0.3) and (6.95,1.4) .. (10.93,3.29)   ;

\draw    (75.8,45.16) -- (36.11,45.41) ;
\draw [shift={(34.11,45.42)}, rotate = 359.64] [color={rgb, 255:red, 0; green, 0; blue, 0 }  ][line width=0.75]    (10.93,-3.29) .. controls (6.95,-1.4) and (3.31,-0.3) .. (0,0) .. controls (3.31,0.3) and (6.95,1.4) .. (10.93,3.29)   ;

\draw    (204.18,45.13) -- (243.8,44.77) ;
\draw [shift={(245.8,44.76)}, rotate = 179.49] [color={rgb, 255:red, 0; green, 0; blue, 0 }  ][line width=0.75]    (10.93,-3.29) .. controls (6.95,-1.4) and (3.31,-0.3) .. (0,0) .. controls (3.31,0.3) and (6.95,1.4) .. (10.93,3.29)   ;

\draw    (186.6,45.16) -- (146.91,45.41) ;
\draw [shift={(144.91,45.42)}, rotate = 359.64] [color={rgb, 255:red, 0; green, 0; blue, 0 }  ][line width=0.75]    (10.93,-3.29) .. controls (6.95,-1.4) and (3.31,-0.3) .. (0,0) .. controls (3.31,0.3) and (6.95,1.4) .. (10.93,3.29)   ;

\draw   (24.8,57) .. controls (24.79,61.67) and (27.12,64) .. (31.79,64.01) -- (99.09,64.06) .. controls (105.76,64.07) and (109.09,66.4) .. (109.09,71.07) .. controls (109.09,66.4) and (112.42,64.07) .. (119.09,64.08)(116.09,64.08) -- (186.39,64.14) .. controls (191.06,64.15) and (193.39,61.82) .. (193.4,57.15) ;

\draw   (197,57.56) .. controls (197.01,62.23) and (199.34,64.56) .. (204.01,64.55) -- (270.51,64.49) .. controls (277.18,64.48) and (280.51,66.81) .. (280.51,71.48) .. controls (280.51,66.81) and (283.84,64.48) .. (290.51,64.47)(287.51,64.48) -- (357.01,64.42) .. controls (361.68,64.41) and (364.01,62.08) .. (364,57.41) ;

\draw    (314.18,44.73) -- (353.8,44.37) ;
\draw [shift={(355.8,44.36)}, rotate = 179.49] [color={rgb, 255:red, 0; green, 0; blue, 0 }  ][line width=0.75]    (10.93,-3.29) .. controls (6.95,-1.4) and (3.31,-0.3) .. (0,0) .. controls (3.31,0.3) and (6.95,1.4) .. (10.93,3.29)   ;

\draw    (417.68,45.16) -- (377.99,45.41) ;
\draw [shift={(375.99,45.42)}, rotate = 359.64] [color={rgb, 255:red, 0; green, 0; blue, 0 }  ][line width=0.75]    (10.93,-3.29) .. controls (6.95,-1.4) and (3.31,-0.3) .. (0,0) .. controls (3.31,0.3) and (6.95,1.4) .. (10.93,3.29)   ;

\draw    (528.48,45.16) -- (488.79,45.41) ;
\draw [shift={(486.79,45.42)}, rotate = 359.64] [color={rgb, 255:red, 0; green, 0; blue, 0 }  ][line width=0.75]    (10.93,-3.29) .. controls (6.95,-1.4) and (3.31,-0.3) .. (0,0) .. controls (3.31,0.3) and (6.95,1.4) .. (10.93,3.29)   ;

\draw   (366.68,57) .. controls (366.67,61.67) and (369,64) .. (373.67,64.01) -- (440.97,64.06) .. controls (447.64,64.07) and (450.97,66.4) .. (450.97,71.07) .. controls (450.97,66.4) and (454.3,64.07) .. (460.97,64.08)(457.97,64.08) -- (528.27,64.14) .. controls (532.94,64.15) and (535.27,61.82) .. (535.28,57.15) ;

\draw    (594.18,45.63) -- (633.8,45.27) ;
\draw [shift={(635.8,45.26)}, rotate = 179.49] [color={rgb, 255:red, 0; green, 0; blue, 0 }  ][line width=0.75]    (10.93,-3.29) .. controls (6.95,-1.4) and (3.31,-0.3) .. (0,0) .. controls (3.31,0.3) and (6.95,1.4) .. (10.93,3.29)   ;

\draw   (587,58.06) .. controls (587.01,62.73) and (589.34,65.06) .. (594.01,65.05) -- (660.51,64.99) .. controls (667.18,64.98) and (670.51,67.31) .. (670.51,71.98) .. controls (670.51,67.31) and (673.84,64.98) .. (680.51,64.97)(677.51,64.98) -- (747.01,64.92) .. controls (751.68,64.91) and (754.01,62.58) .. (754,57.91) ;

\draw    (704.18,45.23) -- (743.8,44.87) ;
\draw [shift={(745.8,44.86)}, rotate = 179.49] [color={rgb, 255:red, 0; green, 0; blue, 0 }  ][line width=0.75]    (10.93,-3.29) .. controls (6.95,-1.4) and (3.31,-0.3) .. (0,0) .. controls (3.31,0.3) and (6.95,1.4) .. (10.93,3.29)   ;

\draw (15.12,40) node [anchor=north west][inner sep=0.60pt]  [font=\large]  {$\circ $};
\draw (744.97,40) node [anchor=north west][inner sep=0.60pt]  [font=\large]  {$\circ $};
\draw (820.26,40) node [anchor=north west][inner sep=0.60pt]  [font=\large]  {$\circ $};
\draw (75.6,40) node [anchor=north west][inner sep=0.60pt]  [font=\large]  {$\circ $};
\draw (92.8,40) node [anchor=north west][inner sep=0.60pt]  [font=\large]  {$\cdots $};
\draw (245.48,40) node [anchor=north west][inner sep=0.60pt]  [font=\large]  {$\circ $};
\draw (184.8,40) node [anchor=north west][inner sep=0.60pt]  [font=\large]  {$\circ $};
\draw (124.8,40) node [anchor=north west][inner sep=0.60pt]  [font=\large]  {$\circ $};
\draw (67.2,76.2) node [anchor=north west][inner sep=0.60pt]    {$b_{1} \ \text{left arrows}$};
\draw (236,76.6) node [anchor=north west][inner sep=0.60pt]    {$b_{2} \ \text{right arrows}$};
\draw (263.2,40) node [anchor=north west][inner sep=0.60pt]  [font=\large]  {$\cdots $};
\draw (295.2,40) node [anchor=north west][inner sep=0.60pt]  [font=\large]  {$\circ $};
\draw (355.48,40) node [anchor=north west][inner sep=0.60pt]  [font=\large]  {$\circ $};
\draw (417.48,40) node [anchor=north west][inner sep=0.60pt]  [font=\large]  {$\circ $};
\draw (434.68,40) node [anchor=north west][inner sep=0.60pt]  [font=\large]  {$\cdots $};
\draw (466.68,40) node [anchor=north west][inner sep=0.60pt]  [font=\large]  {$\circ $};
\draw (409.08,77.1) node [anchor=north west][inner sep=0.60pt]    {$b_{3} \ \text{left arrows}$};
\draw (525.98,40) node [anchor=north west][inner sep=0.60pt]  [font=\large]  {$\circ $};
\draw (542.7,40) node [anchor=north west][inner sep=0.60pt]  [font=\large]  {$\cdots $};
\draw (635.48,40) node [anchor=north west][inner sep=0.60pt]  [font=\large]  {$\circ $};
\draw (574.8,40) node [anchor=north west][inner sep=0.60pt]  [font=\large]  {$\circ $};
\draw (626,77.1) node [anchor=north west][inner sep=0.60pt]    {$b_{d} \ \text{right arrows}$};
\draw (653.2,40) node [anchor=north west][inner sep=0.60pt]  [font=\large]  {$\cdots $};
\draw (685.2,40) node [anchor=north west][inner sep=0.60pt]  [font=\large]  {$\circ $};
\end{tikzpicture}
\caption{The quiver $Q^\flat(\bb).$}
   \label{Q-flat} 
\end{figure}
For a rational number $\frac{r}{s}=[a_1, a_2, \ldots, a_{2m}]>1$, set 
 $$Q_{\frac{r}{s}}^{\flat,\cR}:=Q^\flat(a_1-1, a_2, \ldots , a_{2m-1}, a_{2m}-1).$$
As in the right case, $Q_{\frac{r}{s}}^{\flat,\cS}$ is obtained by deleting the first $a_1$ arrows from $Q_{\frac{r}{s}}^{\flat,\cR}$.

Then for the left $q$-deformed rational numbers, we have the following result.

\begin{thm}[{\cite[Corollary A.2]{BBL}}]\label{theorem:BBL closure} 
Let $\frac{r}{s} >1$ be an irreducible fraction.
Then, the following equations hold:
\begin{align}
\mathcal{R}^{\flat}_{\frac{r}{s}}(q) &=\mathsf{cl}\left(Q_{\frac{r}{s}}^{\flat,\cR};q\right), \\
\mathcal{S}^{\flat}_{\frac{r}{s}}(q) &=\mathsf{cl}\left(Q_{\frac{r}{s}}^{\flat,\cS};q\right).
\end{align}
\end{thm}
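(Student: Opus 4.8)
The plan is to reduce the left case to the right case (Theorem~\ref{theorem:MO closure}), which is already available. The point is that the left and right $q$-deformed rationals arise by applying the \emph{same} matrix $M_q(a_1,\ldots,a_{2m})$ to the base (column) vectors $\binom{1}{1-q}$ and $\binom{1}{0}$, and that the flat quiver $Q^\flat(\bb)$ differs from $Q(\bb)$ only by the $2$-cycle glued at its right end. I treat the numerator $\flR_{\frac{r}{s}}$; the denominator is identical after deleting the first $a_1$ arrows, an operation that does not disturb the right end where the cycle sits.

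First, on the algebraic side, write $\frac{r}{s}=[a_1,\ldots,a_{2m}]$ and let $\frac{v}{u}:=[a_1,\ldots,a_{2m-1}]$ be the previous convergent. By Lemma~\ref{reduced form} and Proposition~\ref{column of a general element}, the second column of $M_q(a_1,\ldots,a_{2m})$ represents $\left[\frac{v}{u}\right]^\sharp_q$. Splitting the flat base vector as $\binom{1}{1-q}=\binom{1}{0}+(1-q)\binom{0}{1}$ and applying $M_q(a_1,\ldots,a_{2m})$, the first column produces $\left[\frac{r}{s}\right]^\sharp_q$ and the second produces $\left[\frac{v}{u}\right]^\sharp_q$, so one is led (with the normalization to be verified below) to the key identity
\[
\flR_{\frac{r}{s}}(q)=q\,\shR_{\frac{r}{s}}(q)+(1-q)\,\shR_{\frac{v}{u}}(q), \qquad\quad \flS_{\frac{r}{s}}(q)=q\,\shS_{\frac{r}{s}}(q)+(1-q)\,\shS_{\frac{v}{u}}(q).
\]

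Next, on the combinatorial side, let $v_0$ be the rightmost vertex of $Q(\bb)$ with $\bb=(a_1-1,a_2,\ldots,a_{2m}-1)$, and let $w$ be the extra vertex of the $2$-cycle. Since the $2$-cycle forces $v_0\in C\iff w\in C$ for every closure $C$, the closures of $Q^\flat(\bb)$ split into those avoiding $\{v_0,w\}$ and those containing both. Writing $A(q)$, $B(q)$ for the closure generating functions of $Q(\bb)$ avoiding, respectively containing, $v_0$, this yields $\mathsf{cl}(Q^\flat(\bb);q)=A(q)+q\,B(q)$ with $A+B=\mathsf{cl}(Q(\bb);q)$, hence $\mathsf{cl}(Q^\flat(\bb);q)=q\,\mathsf{cl}(Q(\bb);q)+(1-q)A(q)$. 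Because $v_0$ is a sink at the end of the last (right-arrow) block, the condition $v_0\notin C$ cascades leftward and forces the whole last block out of $C$, so $A(q)=\mathsf{cl}\big(Q(a_1-1,a_2,\ldots,a_{2m-2},a_{2m-1}-1);q\big)$, which is precisely $Q^{\sharp,\cR}_{\frac{v}{u}}$. Invoking Theorem~\ref{theorem:MO closure} (in its odd-length form) for $\frac{v}{u}$ and for $\frac{r}{s}$ turns this into $\mathsf{cl}(Q^{\flat,\cR}_{\frac{r}{s}};q)=q\,\shR_{\frac{r}{s}}(q)+(1-q)\,\shR_{\frac{v}{u}}(q)$, matching the algebraic identity.

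Comparing the two computations gives $\flR_{\frac{r}{s}}(q)=\mathsf{cl}(Q^{\flat,\cR}_{\frac{r}{s}};q)$, and the parallel argument with the $\cS$-quivers gives the denominator equality. Both sides lie in $\ZZ_{\ge0}[q]$ with constant term $1$, so an identity that a priori holds only up to the equivalence $\equiv$ of Convention~\ref{equiv} becomes an honest equality. The main obstacle is precisely this normalization: $M_q$ is defined only up to $\pm q^n\mathrm{Id}$, and one must verify that the two columns carry relative normalization exactly $q$ (equivalently that the coefficient of $\shR_{\frac{r}{s}}$ is $q$ and not another power of $q$); I would pin this down from $\det M_q(a_1,\ldots,a_{2m})=q^{\,a_1-a_2+\cdots-a_{2m}}$ together with the constant-term conditions $\shS(0)=\flS(0)=1$. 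A secondary nuisance is the degenerate cases ($a_{2m}=1$ giving $b_d=0$, or $a_2=1$, or $m=1$, and $\frac{v}{u}=1$) entering the definitions of $Q^{\flat,\cS}$ and $Q^{\sharp,\cR}_{\frac{v}{u}}$, which must be checked separately against the exceptional conventions in the statement.
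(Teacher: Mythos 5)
The paper itself offers no proof of this statement: it is imported wholesale from Bapat--Becker--Licata as \cite[Corollary A.2]{BBL}. So your argument is not competing with an in-paper proof but supplying one, and in outline it is correct. Your algebraic half is exactly the identity $\flR_{\frac{r}{s}}(q)=q\,\shR_{\frac{r}{s}}(q)+(1-q)\shR_{\frac{t}{u}}(q)$ (with $\frac{t}{u}=[a_1,\ldots,a_{2m-1}]$ the previous convergent) that the paper itself extracts from the normalized form of $M_q(a_1,\ldots,a_{2m})$ in the proofs of Theorems~\ref{Tr A} and \ref{J-L}, so it is consistent with everything else in the text. Your combinatorial half also checks out: the $2$-cycle forces $v_0\in C\Leftrightarrow w\in C$ for every closure $C$, the map $C\mapsto C\cap Q(\bb)_0$ is a bijection onto closures of $Q(\bb)$, giving $\cl\bigl(Q^\flat(\bb);q\bigr)=q\,\cl\bigl(Q(\bb);q\bigr)+(1-q)A(q)$, and the cascade along the final block of right arrows correctly identifies $A(q)$ with $\cl\bigl(Q(a_1-1,a_2,\ldots,a_{2m-1}-1);q\bigr)$, i.e.\ with the $\cR$-quiver of the previous convergent (Remark~\ref{odd length} covering the odd-length reading). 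What this route buys is a short, purely combinatorial derivation of the left closure formula from the right one (Theorem~\ref{theorem:MO closure}), with no recourse to the $2$-Calabi--Yau machinery of \cite{BBL}.

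Two points remain to be discharged, both of which you flag but neither of which you close. First, the determinant $\det M_q(a_1,\ldots,a_{2m})=q^{a_1-a_2+\cdots-a_{2m}}$ does \emph{not} by itself pin down the relative normalization of the two columns: if the columns are $\varepsilon_1q^{n_1}\bigl(\shR_{\frac{r}{s}},\shS_{\frac{r}{s}}\bigr)^{t}$ and $\varepsilon_2q^{n_2}\bigl(\shR_{\frac{t}{u}},\shS_{\frac{t}{u}}\bigr)^{t}$, the determinant controls $\varepsilon_1\varepsilon_2$ and $n_1+n_2$, not the difference $n_1-n_2$ that you actually need. To get $n_1=1$, $n_2=0$ you must additionally use that for $a_i>0$ the entries of $M_q(a_1,\ldots,a_{2m})$ lie in $\ZZ_{\ge 0}[q]$ with computable lowest-degree terms (or simply run an induction on the word in $R_q,L_q$), after which the constant-term conditions do the rest. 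Second, the degenerate cases ($a_{2m}=1$ making the last block empty, $a_2=1$, $m=1$, $\frac{t}{u}\in\ZZ$) are precisely where the paper's quiver conventions become delicate; they all work out, but they are genuinely separate verifications and your proof is not complete until they are written down.
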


\begin{exam}
    For $\frac{11}{8}=[1,2,1,2]$, we can draw the quiver $Q^{\flat,\cR}_{\frac{11}{8}}$ as in Figure~\ref{Q-flat-R-11/8},
\begin{figure}[ht]
\begin{tikzpicture}[x=0.5pt,y=0.5pt,yscale=-1,xscale=1]

\draw    (233.5,46.16) -- (193.81,46.41) ;
\draw [shift={(191.81,46.42)}, rotate = 359.64] [color={rgb, 255:red, 0; green, 0; blue, 0 }  ][line width=0.75]    (10.93,-3.29) .. controls (6.95,-1.4) and (3.31,-0.3) .. (0,0) .. controls (3.31,0.3) and (6.95,1.4) .. (10.93,3.29)   ;

\draw    (344.33,40.09) .. controls (365.18,-2.5) and (385.89,15.4) .. (397.79,38.03) ;
\draw [shift={(398.68,39.78)}, rotate = 243.39] [color={rgb, 255:red, 0; green, 0; blue, 0 }  ][line width=0.75]    (10.93,-3.29) .. controls (6.95,-1.4) and (3.31,-0.3) .. (0,0) .. controls (3.31,0.3) and (6.95,1.4) .. (10.93,3.29)   ;

\draw    (397.41,52.96) .. controls (375.38,79.79) and (357.72,75.74) .. (346.69,53.17) ;
\draw [shift={(345.86,51.4)}, rotate = 65.55] [color={rgb, 255:red, 0; green, 0; blue, 0 }  ][line width=0.75]    (10.93,-3.29) .. controls (6.95,-1.4) and (3.31,-0.3) .. (0,0) .. controls (3.31,0.3) and (6.95,1.4) .. (10.93,3.29)   ;

\draw    (33,45.66) -- (74,45.66) ;
\draw [shift={(76,45.66)}, rotate = 180] [color={rgb, 255:red, 0; green, 0; blue, 0 }  ][line width=0.75]    (10.93,-3.29) .. controls (6.95,-1.4) and (3.31,-0.3) .. (0,0) .. controls (3.31,0.3) and (6.95,1.4) .. (10.93,3.29)   ;

\draw    (109,46.66) -- (150,46.66) ;
\draw [shift={(152,46.66)}, rotate = 180] [color={rgb, 255:red, 0; green, 0; blue, 0 }  ][line width=0.75]    (10.93,-3.29) .. controls (6.95,-1.4) and (3.31,-0.3) .. (0,0) .. controls (3.31,0.3) and (6.95,1.4) .. (10.93,3.29)   ;

\draw    (272,46.66) -- (313,46.66) ;
\draw [shift={(315,46.66)}, rotate = 180] [color={rgb, 255:red, 0; green, 0; blue, 0 }  ][line width=0.75]    (10.93,-3.29) .. controls (6.95,-1.4) and (3.31,-0.3) .. (0,0) .. controls (3.31,0.3) and (6.95,1.4) .. (10.93,3.29)   ;

\draw (161.53,40) node [anchor=north west][inner sep=0.75pt]  [font=\Large]  {$\circ $};
\draw (11.4,40) node [anchor=north west][inner sep=0.75pt]  [font=\Large]  {$\circ $};
\draw (321.78,40) node [anchor=north west][inner sep=0.75pt]  [font=\Large]  {$\circ $};
\draw (399.73,40) node [anchor=north west][inner sep=0.75pt]  [font=\Large]  {$\circ $};
\draw (90,40) node [anchor=north west][inner sep=0.75pt]  [font=\Large]  {$\circ $};
\draw (242.23,40) node [anchor=north west][inner sep=0.75pt]  [font=\Large]  {$\circ $};
\end{tikzpicture}
\caption{The quiver $Q^{\flat,\cR}_{\frac{11}{8}}.$}
   \label{Q-flat-R-11/8} 
\end{figure}
and we have
$\mathsf{cl}\left(Q_{\frac{11}{8}}^{\flat,\cR};q\right) = q^{6}+2 q^{5}+2 q^{4}+2 q^{3}+2 q^{2}+q +1=\mathcal{R}^{\flat}_{\frac{11}{8}}(q).$

Similarly, 
\begin{figure}[ht]
\begin{center}
\begin{tikzpicture}[x=0.55pt,y=0.45pt,yscale=-1,xscale=1]

\draw    (277.93,40.09) .. controls (298.78,-2.5) and (319.49,15.4) .. (331.39,38.03) ;
\draw [shift={(332.28,39.78)}, rotate = 243.39] [color={rgb, 255:red, 0; green, 0; blue, 0 }  ][line width=0.75]    (10.93,-3.29) .. controls (6.95,-1.4) and (3.31,-0.3) .. (0,0) .. controls (3.31,0.3) and (6.95,1.4) .. (10.93,3.29)   ;

\draw    (331.01,52.96) .. controls (308.98,79.79) and (291.32,75.74) .. (280.29,53.17) ;
\draw [shift={(279.46,51.4)}, rotate = 65.55] [color={rgb, 255:red, 0; green, 0; blue, 0 }  ][line width=0.75]    (10.93,-3.29) .. controls (6.95,-1.4) and (3.31,-0.3) .. (0,0) .. controls (3.31,0.3) and (6.95,1.4) .. (10.93,3.29)   ;

\draw    (43.18,44.79) -- (82.8,44.44) ;
\draw [shift={(84.8,44.42)}, rotate = 179.49] [color={rgb, 255:red, 0; green, 0; blue, 0 }  ][line width=0.75]    (10.93,-3.29) .. controls (6.95,-1.4) and (3.31,-0.3) .. (0,0) .. controls (3.31,0.3) and (6.95,1.4) .. (10.93,3.29)   ;

\draw    (166.47,44.85) -- (126.78,45.1) ;
\draw [shift={(124.78,45.11)}, rotate = 359.64] [color={rgb, 255:red, 0; green, 0; blue, 0 }  ][line width=0.75]    (10.93,-3.29) .. controls (6.95,-1.4) and (3.31,-0.3) .. (0,0) .. controls (3.31,0.3) and (6.95,1.4) .. (10.93,3.29)   ;

\draw    (206.18,44.79) -- (245.8,44.44) ;
\draw [shift={(247.8,44.42)}, rotate = 179.49] [color={rgb, 255:red, 0; green, 0; blue, 0 }  ][line width=0.75]    (10.93,-3.29) .. controls (6.95,-1.4) and (3.31,-0.3) .. (0,0) .. controls (3.31,0.3) and (6.95,1.4) .. (10.93,3.29)   ;

\draw (15.13,40) node [anchor=north west][inner sep=0.75pt]  [font=\Large]  {$\circ $};
\draw (255.38,40) node [anchor=north west][inner sep=0.75pt]  [font=\Large]  {$\circ $};
\draw (333.33,40) node [anchor=north west][inner sep=0.75pt]  [font=\Large]  {$\circ $};
\draw (175,40) node [anchor=north west][inner sep=0.75pt]  [font=\Large]  {$\circ $};
\draw (95.83,40) node [anchor=north west][inner sep=0.75pt]  [font=\Large]  {$\circ $};
\end{tikzpicture}
\end{center}
\caption{The quiver $Q^{\flat,\cS}_{\frac{11}{8}}.$}
   \label{Q-flat-S-11/8} 
\end{figure}
we have
$\mathsf{cl}\left(Q_{\frac{11}{8}}^{\flat,\cS};q\right) = q^{5}+2 q^{4}+q^{3}+2 q^{2}+q +1=\mathcal{S}^{\flat}_{\frac{11}{8}}(q).$
\end{exam}

\begin{rem}\label{odd length}
 If $\frac{r}{s} \not \in \ZZ$ and $\frac{r}{s} >1$, the above constructions of $Q_{\frac{r}{s}}^{\flat,\cR}$, $Q_{\frac{r}{s}}^{\flat,\cS}$ and $Q_{\frac{r}{s}}^{\sharp,\cS}$, $Q_{\frac{r}{s}}^{\sharp,\cS}$  do not care the parity of the length of the expression as a regular continued fraction. In fact, the expression of odd length gives the same quivers. 
\end{rem}

\subsection{The basic properties of the left \textit{q}-deformed rational numbers}

The left $q$-deformed rational numbers satisfy the following basic properties, which correspond to \cite[Proposition 2.8]{LM} for the right variant, and the proof is also similar.

\begin{prop}\label{basic for flat}
    For $\frac{r}{s}\in \QQ, n\in \ZZ,$ one has
\begin{enumerate}
    \item $\displaystyle\left[\frac{r}{s}+n\right]^{\flat}_q=q^n\left[\frac{r}{s}\right]^{\flat}_q+\left[n\right]_q;$
    \item $\displaystyle\left[-\frac{r}{s}\right]^{\flat}_q=-q^{-1}\left[\frac{r}{s}\right]^{\flat}_{q^{-1}};$
    \item $\displaystyle\left[\frac{s}{r}\right]^{\flat}_q=\frac{1}{\left[\frac{r}{s}\right]^{\flat}_{q^{-1}}}.$
\end{enumerate}
\end{prop}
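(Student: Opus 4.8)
The plan is to exploit the defining realization $\left[\frac{r}{s}\right]^\flat_q=M_q(\frac{r}{s})\big(\frac{1}{1-q}\big)$, in which $\frac{1}{1-q}=\left[\frac{1}{0}\right]^\flat_q$ is a fixed base point and $M_q(\frac{r}{s})$ is the canonical lift to $\PSL_q(2,\ZZ)$ attached to the even regular continued fraction of $\frac{r}{s}$. Since the fractional linear action factors through $\PSL_q(2,\ZZ)$, each of the three identities reduces to tracking how this lift transforms under the corresponding operation on $\frac{r}{s}$, together with how the resulting matrix moves the base point $\frac{1}{1-q}$. I would first dispose of the exceptional fractions $\frac{0}{1}$ and $\frac{1}{0}$ by the direct evaluations recorded in the definition, and then treat a general $\frac{r}{s}=[a_1,\dots,a_{2m}]$.

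For (1) I would use that $\frac{r}{s}+n=[a_1+n,a_2,\dots,a_{2m}]$, so that $M_q(\frac{r}{s}+n)=R_q^nM_q(\frac{r}{s})$ in $\PSL_q(2,\ZZ)$. Computing $R_q^n=\left(\begin{smallmatrix} q^n & [n]_q\\ 0 & 1\end{smallmatrix}\right)$, whose fractional linear action is $z\mapsto q^nz+[n]_q$, and applying it to $z=\left[\frac{r}{s}\right]^\flat_q$ gives the claim at once.

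For (2) I would use the negation rule $-\frac{r}{s}=[-a_1,\dots,-a_{2m}]$ for continued fractions, which preserves both the length and the $R/L$-pattern. The key computation is the pair of exact conjugation identities $R_q^{-a}=PR_{q^{-1}}^{a}P^{-1}$ and $L_q^{-a}=PL_{q^{-1}}^{a}P^{-1}$ with $P=\left(\begin{smallmatrix}1&0\\0&-q\end{smallmatrix}\right)$; telescoping the intermediate $P^{-1}P$ factors yields $M_q(-\frac{r}{s})=PM_{q^{-1}}(\frac{r}{s})P^{-1}$. Since $P^{-1}$ sends the base point $\frac{1}{1-q}$ to $\frac{1}{1-q^{-1}}=\left[\frac{1}{0}\right]^\flat_{q^{-1}}$, while $P$ acts as $z\mapsto-q^{-1}z$, feeding these through the identity gives $\left[-\frac{r}{s}\right]^\flat_q=-q^{-1}\left[\frac{r}{s}\right]^\flat_{q^{-1}}$.

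The delicate point is (3), which I would deduce from (2) together with the $S_q$-equivariance $\left[-\frac{s}{r}\right]^\flat_q=S_q\big(\left[\frac{r}{s}\right]^\flat_q\big)$. Taking reciprocals changes the parity of the continued fraction, so there is \emph{no} clean matrix identity $M_q(\frac{s}{r})\equiv S_qM_q(\frac{r}{s})$ in $\PSL_q(2,\ZZ)$ (already a determinant-parity check rules this out), and this mismatch is exactly the main obstacle. The way around it is the observation that $R_q$ fixes the base point, $R_q\big(\frac{1}{1-q}\big)=\frac{1}{1-q}$: since $M(-\frac{s}{r})$ and $S\,M(\frac{r}{s})$ both send $\frac{1}{0}$ to $-\frac{s}{r}$, they differ on the right by an element of the stabilizer $\langle R\rangle$ of $\frac{1}{0}$ in $\PSL(2,\ZZ)$, and lifting through the isomorphism of Proposition~\ref{def of q-rat} gives $M_q(-\frac{s}{r})\equiv S_qM_q(\frac{r}{s})R_q^{k}$ for some $k\in\ZZ$. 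Evaluating at the $R_q$-fixed base point kills the ambiguous factor $R_q^{k}$ and yields the $S_q$-equivariance. Finally, writing $\left[-\frac{s}{r}\right]^\flat_q$ via (2) as $-q^{-1}\left[\frac{s}{r}\right]^\flat_{q^{-1}}$ and via the equivariance as $S_q\big(\left[\frac{r}{s}\right]^\flat_q\big)=-q^{-1}/\left[\frac{r}{s}\right]^\flat_q$, then equating and substituting $q\mapsto q^{-1}$, produces (3).
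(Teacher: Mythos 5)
Your proof is correct, and it is worth noting that the paper gives no argument of its own for this proposition: it only remarks that the proof is ``similar'' to that of \cite[Proposition~2.8]{LM} for the right variant. Your write-up therefore supplies a proof the paper omits, and it does so in the expected way, via the $\PSL_q(2,\ZZ)$-action on the base point $\frac{1}{1-q}$. All three computations check out: $R_q^n=\left(\begin{smallmatrix} q^n & [n]_q\\ 0&1\end{smallmatrix}\right)$ gives (1); the conjugation identities $R_q^{-a}=PR_{q^{-1}}^{a}P^{-1}$ and $L_q^{-a}=PL_{q^{-1}}^{a}P^{-1}$ with $P=\left(\begin{smallmatrix}1&0\\0&-q\end{smallmatrix}\right)$ are exact (they reduce to $[-a]_q=-q^{-1}[a]_{q^{-1}}$ and $[-a]_{q^{-1}}=-q\,[a]_q$), and $P^{-1}$ carries $\frac{1}{1-q}$ to $\frac{1}{1-q^{-1}}$, which gives (2); and your stabilizer argument for (3) is exactly the right way to obtain the $S_q$-equivariance despite the parity mismatch, since $\mathrm{Stab}_{\PSL(2,\ZZ)}\bigl(\frac{1}{0}\bigr)=\langle R\rangle$ and $R_q$ fixes $\frac{1}{1-q}$. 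One small point: in (1), if the sign of $a_1+n$ disagrees with that of $a_1$, the tuple $(a_1+n,a_2,\dots,a_{2m})$ is no longer the canonical expansion of $\frac{r}{s}+n$, so $M_q\bigl(\frac{r}{s}+n\bigr)=R_q^{\,n}M_q\bigl(\frac{r}{s}\bigr)$ holds only up to a right factor $\pm q^{N}R_q^{k}$; this is harmless precisely because of the observation you make in (3) that $R_q$ fixes the base point, so I would state that observation once, up front, as the lemma underpinning all three parts.
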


By Proposition~\ref{basic for flat} (1), for irreducible fractions $\frac{r}s, \frac{r'}s$ with $r \equiv r' \pmod{s}$, we have 
\begin{equation}\label{flat r equiv r'}
\flS_{\frac{r'}s}(q)= \flS_{\frac{r}s}(q).
\end{equation}

\begin{prop}\label{s+t=r}
The following hold. 
\begin{itemize}
\item[(1)] For irreducible fractions $\frac{r}s, \frac{r'}s$ with $r+r'\equiv 0 \pmod{s}$, we have $\flS_{\frac{r}s}(q) = \flS_{\frac{r'}s}(q)^\vee$. 
\item[(2)] For irreducible fractions $\frac{r}s, \frac{r}t >1$ with $s+t =r$,  we have $\flR_{\frac{r}s}(q) = \flR_{\frac{r}t}(q)^\vee$. 
\end{itemize}
\end{prop}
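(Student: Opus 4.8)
Both parts will follow by manipulating the left $q$-rationals through the functional equations of Proposition~\ref{basic for flat} and reading off a single numerator or denominator, keeping track only of the equivalence $\equiv$ of Convention~\ref{equiv}; the last step in each case upgrades $\equiv$ to genuine equality. I would isolate that upgrade as a preliminary observation. By \eqref{flat r equiv r'}, every $\flS_{\frac rs}$ coincides with $\flS_{\frac{r_0}{s}}$ for some representative $\frac{r_0}s>1$, and by Theorem~\ref{theorem:BBL closure} such a polynomial is a closure polynomial; hence it has nonnegative coefficients with both constant term and leading coefficient equal to $1$ (the empty set and the whole vertex set are the unique closures of extreme cardinality), i.e.\ it is monic with $\flS_{\frac rs}(0)=1$. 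The same holds for $\flR_{\frac rs}$ when $\frac rs>1$. Consequently, if $f,g\in\ZZ[q]$ are two such polynomials with $f\equiv g^\vee$, then comparing constant terms forces the scalar to be $+1$ and the power of $q$ to be $0$, so $f=g^\vee$; and since $\vee$ is an involution on polynomials with nonzero constant term, this is the same as $f^\vee=g$.

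For part (1), I would first invoke \eqref{flat r equiv r'} to replace $r'$ by $-r$, reducing the claim to $\flS_{\frac rs}(q)=\flS_{\frac{-r}s}(q)^\vee$. Proposition~\ref{basic for flat}(2) gives
\[
\left[-\frac rs\right]^\flat_q=-q^{-1}\left[\frac rs\right]^\flat_{q^{-1}}=-q^{-1}\,\frac{\flR_{\frac rs}(q^{-1})}{\flS_{\frac rs}(q^{-1})},
\]
so the denominator of $\left[-\frac rs\right]^\flat_q$ is $\equiv \flS_{\frac rs}(q^{-1})$. Using $f(q^{-1})\equiv f(q)^\vee$, this reads $\flS_{\frac{-r}s}(q)\equiv \flS_{\frac rs}(q)^\vee$, which the preliminary observation upgrades to $\flS_{\frac{-r}s}(q)=\flS_{\frac rs}(q)^\vee$; applying $\vee$ gives $\flS_{\frac rs}(q)=\flS_{\frac{-r}s}(q)^\vee=\flS_{\frac{r'}s}(q)^\vee$.

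For part (2), set $t=r-s$ and note $\frac{r-s}r=1-\frac sr$. Chaining Proposition~\ref{basic for flat}(1) (with $n=1$), then (2), then (3) (after $q\mapsto q^{-1}$) yields
\[
\left[\frac{r-s}r\right]^\flat_q=q\left[-\frac sr\right]^\flat_q+[1]_q=1-\left[\frac sr\right]^\flat_{q^{-1}}=1-\frac{1}{\left[\frac rs\right]^\flat_q}.
\]
Applying (3) once more to invert $\frac{r-s}r$, and substituting $q\mapsto q^{-1}$ in the previous line, I obtain
\[
\left[\frac r{r-s}\right]^\flat_q=\frac{1}{\left[\frac{r-s}r\right]^\flat_{q^{-1}}}=\frac{\left[\frac rs\right]^\flat_{q^{-1}}}{\left[\frac rs\right]^\flat_{q^{-1}}-1}=\frac{\flR_{\frac rs}(q^{-1})}{\flR_{\frac rs}(q^{-1})-\flS_{\frac rs}(q^{-1})}.
\]
Since $\flR_{\frac rs}$ and $\flS_{\frac rs}$ are coprime, so are the numerator and denominator on the right, so the numerator of $\left[\frac rt\right]^\flat_q$ is $\equiv \flR_{\frac rs}(q^{-1})\equiv\flR_{\frac rs}(q)^\vee$. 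As $\frac rt,\frac rs>1$, the upgrade gives $\flR_{\frac rt}(q)=\flR_{\frac rs}(q)^\vee$, i.e.\ $\flR_{\frac rs}(q)=\flR_{\frac rt}(q)^\vee$.

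The algebra is a routine chase through Proposition~\ref{basic for flat}; the one genuine obstacle is the $\pm q^n$ ambiguity, since these identities pin down numerators and denominators only up to $\equiv$. The crux is therefore the preliminary observation that both sides are normalized identically, and this is precisely where monicity and positivity---supplied by the closure-polynomial description---are indispensable: without them one recovers the equalities only up to sign and a power of $q$.
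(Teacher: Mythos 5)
Your proof is correct, and it takes a genuinely different route from the paper's, most visibly in part (1). There the paper argues combinatorially: using \eqref{flat r equiv r'} it places $\frac{r}{s}$ and $\frac{r'}{s}$ in the same unit interval $(n,n+1)$, compares their regular continued fraction expansions ($[n,a_2,\ldots,a_k]$ versus $[n,1,a_2-1,\ldots,a_k]$, via \cite[Lemma~3.1]{KMRWY}), observes that the resulting quivers satisfy $Q_{\frac{r}{s}}^{\flat,\cS}=\bigl(Q_{\frac{r'}{s}}^{\flat,\cS}\bigr)^\vee$, and concludes from Theorem~\ref{theorem:BBL closure} together with \eqref{op-cl}. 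You instead replace $r'$ by $-r$ and read the denominator off the negation identity of Proposition~\ref{basic for flat}(2), which makes the main identity a one-line computation up to $\equiv$. For part (2) the paper deduces the claim from part (1) applied to $\frac{s}{r},\frac{t}{r}$ plus Proposition~\ref{basic for flat}(3), whereas you chain (1)--(3) of Proposition~\ref{basic for flat} to compute $\bigl[\frac{r}{r-s}\bigr]^\flat_q$ directly, so your part (2) is independent of part (1); both computations are equivalent in substance. You are also right that the real content is the normalization step killing the $\pm q^n$ ambiguity, and your justification via the closure-polynomial description (monic, nonnegative, constant term $1$) is valid; note, though, that positivity is not strictly indispensable there --- since $f(0)=g(0)=1$ already forces $n=0$, and evaluating at $q=1$ gives $f(1)=g^\vee(1)=s>0$ (resp. $=r>0$), which fixes the sign without any appeal to the quiver model. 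What the paper's combinatorial route buys in exchange for the extra continued-fraction bookkeeping is the stronger structural fact that the two posets are literally opposite --- precisely the kind of information behind the bijective questions raised around \eqref{property of Q^flat}.
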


\begin{proof}
(1) By \eqref{flat r equiv r'},  we may assume that $ 1\le n < \frac{r}s < \frac{r'}s < n+1$. If $\frac{r}s=[n, a_2, \ldots, a_k]$, then $a_2 \ge 2$ and $\frac{r'}s=[n, 1, a_2-1, \ldots, a_k]$ by \cite[Lemma~3.1]{KMRWY}. So we have $Q_{\frac{r}{s}}^{\flat,\cS}=\left(Q_{\frac{r'}{s}}^{\flat,\cS}\right)^\vee$, and the assertion follows from Theorem~\ref{theorem:BBL closure} and \eqref{op-cl}. 

(2) By (1), we have $\flS_{\frac{s}r}(q)=\flS_{\frac{t}r}(q)^\vee$. By Proposition~\ref{basic for flat} (3), we have  $\flR_{\frac{r}s}(q) \equiv \flS_{\frac{s}r}(q^{-1})$ and $\flR_{\frac{r}t}(q) \equiv \flS_{\frac{t}r}(q^{-1})$. Since $\flR_{\frac{r}s}(q), \flR_{\frac{r}t}(q) \in \ZZ[q]$ with $\flR_{\frac{r}s}(0)=\flR_{\frac{r}t}(0)=1$ now, the assertion follows. 
\end{proof}

We remark that the right $q$-deformed rationals $\left[\dfrac{r}s\right]_q^\sharp$ also satisfy the equations corresponding to \eqref{flat r equiv r'} and Proposition~\ref{s+t=r} (c.f. \cite{MO1,KMRWY}).

\section{\textit{q}-transposes and orthogonal \textit{q}-transpose in \texorpdfstring{${\rm PSL}_q(2,\mathbb{Z})$}{PSLq(2,Z)}}\label{Sec3}

 In this section, we define two operations the {\it $q$-transpose} $A^{T_q}$ and the {\it orthogonal $q$-transpose} $A^{O_q},$ and give  some basic properties and applications of them.

\begin{defn}
For a matrix 

$$A=\begin{pmatrix}
\cR(q) & \mathcal{V}(q) \\
\cS(q) &  \mathcal{U}(q) \\
\end{pmatrix}$$
whose entries are elements in $\ZZ[q^{\pm 1}]$, set 
$$
A^{T_q}:=\begin{pmatrix}
\cR(q) & q^{-1}\cS(q) \\
q \mathcal{V}(q) &  \mathcal{U}(q) \\
\end{pmatrix}.
$$
We call it the {\it $q$-transpose} of $A$. 
\end{defn}

An easy calculation shows that $(A^{T_q})^{T_q} = A$, $\det (A^{T_q})= \det A$, $\Tr (A^{T_q})=\Tr A$, and $(AB)^{T_q}=B^{T_q}A^{T_q}$. The operation $(-)^{T_q}$ also makes sense up to the equivalence $\equiv$. 

\begin{lem}\label{q-transpose}
If $A \in \mathrm{PSL}_q(2,\ZZ)$, then $A^{T_q} \in \mathrm{PSL}_q(2,\ZZ)$. 
\end{lem}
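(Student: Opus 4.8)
The plan is to exploit the anti-homomorphism behaviour of $(-)^{T_q}$ together with the fact that $\PSL_q(2,\ZZ)$ is generated by $R_q$ and $S_q$. Since $(AB)^{T_q}=B^{T_q}A^{T_q}$ and $(-)^{T_q}$ is compatible with the equivalence $\equiv$ defining the quotient, it suffices to check that the $q$-transposes of the two generators lie in $\PSL_q(2,\ZZ)$ and then to propagate this through arbitrary words.

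First I would compute the generators directly. For $R_q$ one finds
$$
R_q^{T_q}=\begin{pmatrix} q & 0 \\ q & 1 \end{pmatrix}=q\,L_q\equiv L_q,
$$
which lies in $\PSL_q(2,\ZZ)$ because $L_q=q^{-1}R_qS_qR_q$. For $S_q$ one finds
$$
S_q^{T_q}=\begin{pmatrix} 0 & q^{-1} \\ -1 & 0 \end{pmatrix}=-S_q\equiv S_q\in\PSL_q(2,\ZZ).
$$
Thus both $R_q^{T_q}$ and $S_q^{T_q}$ represent elements of $\PSL_q(2,\ZZ)$.

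Next I would pass to arbitrary elements. Since $\PSL_q(2,\ZZ)$ is a group closed under inverses, the identity $(A^{-1})^{T_q}=(A^{T_q})^{-1}$ --- itself obtained by applying $(AB)^{T_q}=B^{T_q}A^{T_q}$ to $B=A^{-1}$ together with $\mathrm{Id}^{T_q}=\mathrm{Id}$ --- shows that $(R_q^{-1})^{T_q}$ and $(S_q^{-1})^{T_q}$ also lie in $\PSL_q(2,\ZZ)$. Writing a general $A\in\PSL_q(2,\ZZ)$ as a word $X_1\cdots X_k$ with each $X_i\in\{R_q^{\pm1},S_q^{\pm1}\}$ and using the anti-homomorphism property gives $A^{T_q}=X_k^{T_q}\cdots X_1^{T_q}$, a product of elements already shown to be in $\PSL_q(2,\ZZ)$; hence $A^{T_q}\in\PSL_q(2,\ZZ)$.

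There is no serious obstacle here: the only points needing a little care are that $(-)^{T_q}$ is well defined modulo $\{\pm q^n\mathrm{Id}\}$ (immediate, since scaling an entry by $\pm q^n$ scales the whole $q$-transpose by the same factor) and the bookkeeping of inverses, which the identity $(A^{-1})^{T_q}=(A^{T_q})^{-1}$ handles cleanly. I would record as a remark the conceptual formula $A^{T_q}=D\,A^{T}D^{-1}$, where $A^T$ is the ordinary transpose and $D=\begin{pmatrix}1&0\\0&q\end{pmatrix}$; this explains at a glance why $\det$ and $\Tr$ are preserved by $(-)^{T_q}$, although it is not needed for the proof of the lemma itself.
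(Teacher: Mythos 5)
Your proof is correct and follows essentially the same route as the paper: compute $R_q^{T_q}\equiv L_q$ and $S_q^{T_q}\equiv S_q$, then propagate through words in the generators using the anti-homomorphism property and $(A^{-1})^{T_q}=(A^{T_q})^{-1}$. The paper's version is just a terser statement of the same argument; your added remark $A^{T_q}=D\,A^{T}D^{-1}$ is a nice (and correct) conceptual gloss.
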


\begin{proof}

We have $R_q^{T_q} \equiv L_q$ and $S_q^{T_q} \equiv S_q$. Moreover, if $A$ is regular, then $A^{T_q}$ is also, and 
$(A^{-1})^{T_q}\equiv (A^{T_q})^{-1}$. 
So the assertion follows.   
\end{proof}

The next result immediately follows from Proposition~\ref{column of a general element} and Lemma~\ref{q-transpose}. 

\begin{cor}\label{row of a general element} 

For 
$$\begin{pmatrix}
\cR(q) & \mathcal{V}(q) \\
\cS(q) &  \mathcal{U}(q) \\
\end{pmatrix} \in \mathrm{PSL}_{q}(2,\mathbb{Z})
$$   
with $r:=\cR(1)$, $s:=\cS(1)$, $v:=\mathcal{V}(1)$ and 
$u:=\mathcal{U}(1)$, we have  
$$
\left[\displaystyle\frac{r}{v}\right]^\sharp_q=\left[ \frac{\cR(q)}{q\cV(q)} \right] \quad  \text{and} \quad \left[\displaystyle\frac{s}{u}\right]^\sharp_q=\left [ \frac{\cS(q)}{q\cU(q)} \right ]. 
$$
\end{cor}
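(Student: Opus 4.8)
The plan is to deduce the statement by applying Proposition~\ref{column of a general element} not to $A$ itself but to its $q$-transpose. First I would observe that, by Lemma~\ref{q-transpose}, the matrix
$$A^{T_q}=\begin{pmatrix} \cR(q) & q^{-1}\cS(q) \\ q\cV(q) & \cU(q) \end{pmatrix}$$
again lies in $\PSL_q(2,\ZZ)$, so that Proposition~\ref{column of a general element} becomes applicable to it rather than to $A$.

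Next I would record the entries of $A^{T_q}$ together with their values at $q=1$. Its $(1,1)$, $(2,1)$, $(1,2)$, $(2,2)$ entries are $\cR(q)$, $q\cV(q)$, $q^{-1}\cS(q)$, $\cU(q)$, and since $\cR(1)=r$, $\cV(1)=v$, $\cS(1)=s$, $\cU(1)=u$ while the factors $q^{\pm1}$ specialize to $1$, the corresponding specializations are $r$, $v$, $s$, $u$. Feeding this data into Proposition~\ref{column of a general element} applied to $A^{T_q}$, the first column gives $\left[\frac{r}{v}\right]^\sharp_q=\left[\frac{\cR(q)}{q\cV(q)}\right]$, which is exactly the first asserted identity, and the second column gives $\left[\frac{s}{u}\right]^\sharp_q=\left[\frac{q^{-1}\cS(q)}{\cU(q)}\right]$.

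Finally I would reconcile the second identity with the form printed in the corollary. Because the bracket $[\,\cdot\,]$ denotes the orbit under $G=\{\pm q^n\mid n\in\ZZ\}$, multiplying the representative $\begin{pmatrix} q^{-1}\cS(q) \\ \cU(q) \end{pmatrix}$ by $q\in G$ yields $\begin{pmatrix} \cS(q) \\ q\cU(q) \end{pmatrix}$, whence $\left[\frac{q^{-1}\cS(q)}{\cU(q)}\right]=\left[\frac{\cS(q)}{q\cU(q)}\right]$, as required. I do not expect any genuine obstacle here, since the whole argument is a one-step consequence of the two cited results; the only points to watch are this $G$-normalization in the last step and the bookkeeping of which $q=1$ specialization lands in the numerator versus the denominator once the $q$-transpose swaps the off-diagonal roles (with the usual sign adjustment from Proposition~\ref{column of a general element} in case $v$ or $u$ is negative).
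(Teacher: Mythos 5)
Your argument is correct and is exactly the route the paper takes: the paper states that the corollary ``immediately follows from Proposition~\ref{column of a general element} and Lemma~\ref{q-transpose},'' i.e.\ one applies the column result to $A^{T_q}$ and then rescales by the $G$-action. Your explicit bookkeeping of the entries of $A^{T_q}$, their specializations at $q=1$, and the final normalization $\left[\frac{q^{-1}\cS(q)}{\cU(q)}\right]=\left[\frac{\cS(q)}{q\cU(q)}\right]$ just spells out what the paper leaves implicit.
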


Now we can get the third proof of \cite[Theorem~3.5]{KMRWY} (\cite{KMRWY} already contains two proofs).

\begin{cor}[{\cite[Theorem~3.5]{KMRWY}}]\label{3rd proof}
For a positive integer $s$ and integers $v,w \in \ZZ$ with $vw \equiv -1 \pmod{s}$, we have $\shS_{\frac{v}s}(q) = \shS_{\frac{w}{s}}(q)$. 
\end{cor}

\begin{proof}
Since $vw \equiv -1 \pmod{s}$, 
there is some $r \in \ZZ$ such that $\begin{pmatrix}
r & v \\
w & s \\
\end{pmatrix} \in \mathrm{SL}(2,\mathbb{Z})$.
Now we can take $\cR(q),\cV(q),\cW(q),\cS(q) \in \ZZ[q^{\pm}]$ such that
$$
\begin{pmatrix}
\cR(q) & \cV(q) \\
\cW(q) & \cS(q) \\
\end{pmatrix} \in \mathrm{PSL}_q(2, \ZZ) 
\quad \text{and} \quad 
\begin{pmatrix}
\cR(1) & \cV(1) \\
\cW(1) & \cS(1) \\
\end{pmatrix}
\equiv \begin{pmatrix}
r & v \\
w & s \\
\end{pmatrix}.$$
By Proposition~\ref{column of a general element} and Corollary~\ref{row of a general element}, we have 
$$
\left[\displaystyle\frac{v}{s}\right]^\sharp_q
= \left[\frac{\shR_{\frac{v}{s}}(q)}{\shS_\frac{v}{s}(q)} \right]
= \left[\frac{\mathcal{V}(q)}{\mathcal{S}(q)} \right] \quad \text{and} \quad  \left[\displaystyle\frac{w}{s}\right]^\sharp_q
= \left[\frac{\shR_{\frac{w}{s}}(q)}{\shS_\frac{w}{s}(q)} \right]
= \left[
\frac{\mathcal{W}(q)}{q\mathcal{S}(q)}\right]. 
$$
Hence we have $\shS_\frac{v}{s}(q)\equiv \cS(q) \equiv \shS_\frac{w}{s}(q)$. Since $\shS_\frac{v}{s}(q), \shS_\frac{w}{s}(q) \in \ZZ[q]$ with $\shS_\frac{v}{s}(0)= \shS_\frac{w}{s}(0) =1$, we are done. 
\end{proof}

\begin{cor}[{\cite[Lemma~4.1]{KMRWY}}]\label{3rd proof for R}
For $r \in \ZZ$ and positive integers $v, \ w$ with $vw \equiv -1 \pmod{r}$, we have $\shR_{\frac{r}v}(q)\equiv\shR_{\frac{r}{w}}(q)$. If further $\frac{r}v, \frac{r}w>1$, we have   $\shR_{\frac{r}v}(q)=\shR_{\frac{r}{w}}(q)$.
\end{cor}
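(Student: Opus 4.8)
The plan is to mirror the proof of Corollary~\ref{3rd proof}, but now exploiting the column/row duality of a matrix in $\mathrm{PSL}_q(2,\ZZ)$ that the $q$-transpose provides: there the shared entry was the bottom-right one and the modulus was a denominator, whereas here the shared entry will be the \emph{top-left} one and the modulus $r$ is a numerator. Concretely, from $vw \equiv -1 \pmod r$ I would choose $t \in \ZZ$ with $vw+1 = rt$, so that the integer matrix $\begin{pmatrix} r & w \\ v & t \end{pmatrix}$ has determinant $rt - vw = 1$ and hence lies in $\mathrm{SL}(2,\ZZ)$. Using Proposition~\ref{def of q-rat} I would lift it to a matrix $A = \begin{pmatrix} \cR(q) & \cW(q) \\ \cV(q) & \mathcal{T}(q) \end{pmatrix} \in \mathrm{PSL}_q(2,\ZZ)$ with $\cR(1)=r$, $\cW(1)=w$, $\cV(1)=v$, $\mathcal{T}(1)=t$.

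The key step is then to read off the single entry $\cR(q)$ in two different ways. By Proposition~\ref{column of a general element}, the first column $\begin{pmatrix}\cR(q)\\ \cV(q)\end{pmatrix}$ represents $\left[\frac{r}{v}\right]^\sharp_q$, so $\shR_{\frac{r}{v}}(q) \equiv \cR(q)$ in the sense of Convention~\ref{equiv}. By Corollary~\ref{row of a general element}, the first row represents $\left[\frac{r}{w}\right]^\sharp_q$ via $\left[\frac{\cR(q)}{q\cW(q)}\right]$, so likewise $\shR_{\frac{r}{w}}(q) \equiv \cR(q)$. Chaining these gives $\shR_{\frac{r}{v}}(q) \equiv \cR(q) \equiv \shR_{\frac{r}{w}}(q)$, which is exactly the first assertion (note that $v,w>0$ keeps the denominators positive, so no sign adjustment of the fractions is needed).

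Finally, to upgrade $\equiv$ to honest equality when $\frac{r}{v}, \frac{r}{w} > 1$, I would invoke the normalization recorded earlier in the excerpt: for $\frac{r}{s}>1$ one has $\shR_{\frac{r}{s}}(q) \in \ZZ[q]$ with $\shR_{\frac{r}{s}}(0)=1$. Thus $\shR_{\frac{r}{v}}(q)$ and $\shR_{\frac{r}{w}}(q)$ are polynomials in $\ZZ[q]$ with constant term $1$ that differ by a factor $\pm q^{n}$; since their constant terms are equal and nonzero, necessarily $n=0$ with the $+$ sign, forcing $\shR_{\frac{r}{v}}(q) = \shR_{\frac{r}{w}}(q)$. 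I do not anticipate a genuine obstacle here: the only points requiring care are the bookkeeping of the $\equiv$-equivalence and confirming that the top-right/bottom-left placement of $w$ and $v$ yields precisely the fractions $\frac{r}{w}$ and $\frac{r}{v}$, both of which are routine. (An alternative route, should one prefer to avoid constructing a new lift, is to reduce directly to Corollary~\ref{3rd proof} through the inversion relation $\shR_{\frac{r}{v}}(q) \equiv \shS_{-\frac{v}{r}}(q)$ coming from the $S_q$-action in the proof of Proposition~\ref{column of a general element}, applied with $(-v)(-w)=vw\equiv -1\pmod r$.)
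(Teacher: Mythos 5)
Your proposal is correct and is essentially the paper's own argument: the paper also proves this by rerunning the proof of Corollary~\ref{3rd proof} while reading off the $(1,1)$-entry from the first column (via Proposition~\ref{column of a general element}) and the first row (via Corollary~\ref{row of a general element}), then upgrading $\equiv$ to $=$ using $\shR_\alpha(q)\in\ZZ[q]$ with $\shR_\alpha(0)=1$ for $\alpha>1$. Your write-up just makes the matrix $\begin{pmatrix} r & w \\ v & t\end{pmatrix}$ and the bookkeeping explicit.
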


\begin{proof}
The first assertion can be shown in a similar way to Corollary~\ref{3rd proof}. However, we focus on the $(1,1)$-entry of the matrices in the proof of the corollary. Since $\shR_\alpha(q) \in \ZZ[q]$ with $\shR_\alpha(0)=1$ for $\alpha>1$, the second assertion follows from the first. 
\end{proof}

\begin{defn}
For a matrix 

$$A=\begin{pmatrix}
\cR(q) & \mathcal{V}(q) \\
\cS(q) &  \mathcal{U}(q) \\
\end{pmatrix}$$
whose entries are elements in $\ZZ[q^{\pm 1}]$, set 
$$
A^{O_q}:=\begin{pmatrix}
\mathcal{U}(q^{-1}) & q^{-1}\mathcal{V}(q^{-1}) \\
q \cS(q^{-1}) &  \cR(q^{-1}) \\
\end{pmatrix}.
$$
We call it the {\it orthogonal $q$-transpose} of $A$. 
\end{defn}

Easy calculation shows that $(A^{O_q})^{O_q} = A$, $(AB)^{O_q}=B^{O_q}A^{O_q}$. For $d(q):= \det A$ and $t(q):=\Tr A$, we have $\det (A^{O_q})=d(q^{-1})$ and $\Tr(A^{O_q})=t(q^{-1})$. 
Since the operation $(-)^{O_q}$  also makes  sense up to the equivalence $\equiv$, and we have $(R_q)^{O_q} \equiv R_q$ and $(S_q)^{O_q} \equiv S_q$. By the same way as Lemma~\ref{q-transpose}, we can show the following. 

\begin{lem}\label{orth & double q-transpose}
If $A \in \mathrm{PSL}_q(2,\ZZ)$, then $A^{O_q} \in \mathrm{PSL}_q(2,\ZZ)$. 
\end{lem}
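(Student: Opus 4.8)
The plan is to mimic exactly the proof of Lemma~\ref{q-transpose}, as the excerpt already hints (``By the same way as Lemma~\ref{q-transpose}''). Since $R_q$ and $S_q$ generate $\mathrm{PSL}_q(2,\ZZ)$, it suffices to verify three things: first, that the operation $(-)^{O_q}$ sends generators to elements of $\mathrm{PSL}_q(2,\ZZ)$ (up to $\equiv$); second, that it is compatible with products in a contravariant way; and third, that it behaves well under inverses. The product compatibility $(AB)^{O_q} = B^{O_q} A^{O_q}$ and the involutivity $(A^{O_q})^{O_q} = A$ are already asserted in the text preceding the lemma, so I may invoke them directly.

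First I would record that $(R_q)^{O_q} \equiv R_q$ and $(S_q)^{O_q} \equiv S_q$, which are stated just before the lemma and follow by direct substitution into the definition of $A^{O_q}$ (one simply reads off the entries and uses that $q^{-1}$-substitution fixes the constant entries $0,1,-1$ up to the scalar factors $q^{\pm 1}$ absorbed by $\equiv$). Since these two images lie in $\mathrm{PSL}_q(2,\ZZ)$, and using the contravariant multiplicativity $(AB)^{O_q} = B^{O_q} A^{O_q}$, any word $A = X_1 X_2 \cdots X_n$ in $R_q^{\pm 1}, S_q^{\pm 1}$ satisfies $A^{O_q} \equiv X_n^{O_q} \cdots X_1^{O_q}$, which is again a word in these generators and their inverses.

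The one remaining point is the treatment of inverses: to push the argument through a general word I must know that $(X^{-1})^{O_q} \in \mathrm{PSL}_q(2,\ZZ)$ whenever $(X)^{O_q} \in \mathrm{PSL}_q(2,\ZZ)$. Paralleling the phrase ``$(A^{-1})^{T_q} \equiv (A^{T_q})^{-1}$'' from Lemma~\ref{q-transpose}, I would establish $(A^{-1})^{O_q} \equiv (A^{O_q})^{-1}$ for regular $A$: this follows formally from $(AB)^{O_q} = B^{O_q}A^{O_q}$ and $(\mathrm{Id})^{O_q} \equiv \mathrm{Id}$ by applying $(-)^{O_q}$ to $A A^{-1} = \mathrm{Id}$. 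Then $(A^{O_q})^{-1}$ lies in $\mathrm{PSL}_q(2,\ZZ)$ because that group is closed under inversion, and so $(A^{-1})^{O_q}$ does too.

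I expect the only genuine obstacle to be purely bookkeeping: confirming the generator images $(R_q)^{O_q} \equiv R_q$ and $(S_q)^{O_q} \equiv S_q$ really hold up to the scalar ambiguity $\equiv$, i.e.\ checking that the factors $q^{\pm 1}$ introduced by the off-diagonal shifts and the $q^{-1}$-substitution are consistent with a single global scalar $\pm q^n$. Once these base cases and the multiplicativity/inverse compatibility are in hand, the induction on word length is immediate, and the conclusion $A^{O_q} \in \mathrm{PSL}_q(2,\ZZ)$ follows exactly as in Lemma~\ref{q-transpose}. The statement is clean because all the structural identities ($\det$, $\Tr$, involutivity, product rule) have already been tabulated in the surrounding text, leaving essentially nothing to compute.
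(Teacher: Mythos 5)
Your proposal is correct and follows exactly the route the paper intends: the paper's proof is literally ``by the same way as Lemma~\ref{q-transpose},'' using the generator identities $(R_q)^{O_q}\equiv R_q$, $(S_q)^{O_q}\equiv S_q$ recorded just before the lemma together with contravariant multiplicativity and compatibility with inverses. Your write-up merely spells out the bookkeeping (the word-length induction and the identity $(A^{-1})^{O_q}\equiv(A^{O_q})^{-1}$) that the paper leaves implicit.
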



\section{The trace of \texorpdfstring{$A \in \PSL_q(2,\ZZ)$ } {}
and the Jones polynomials of rational knots}\label{left}

In this section, we give further applications of the (orthogonal) \textit{q}-transpose. These applications include arithmetic properties of the left \textit{q}-deformed rationals, 
the trace of the matrices in $\PSL_q(2,\ZZ),$ and the normalized Jones polynomials of rational links.

We recall some basic notions and results on rational links and their normalized Jones polynomials. 

For an irreducible fraction $\frac{r}s=[a_1, \ldots, a_{2m}]>1$, the rational link associated with $\frac{r}s$ in the $3$-sphere 
$\mathbb{S}^3$ is determined by the Figure~\ref{L_alpha},
\begin{figure}[ht]
\begin{center}
\begin{tikzpicture}[x=0.6pt,y=0.4pt,yscale=-1,xscale=1]

\draw   (26,89) .. controls (26,84.58) and (29.58,81) .. (34,81) -- (88,81) .. controls (92.42,81) and (96,84.58) .. (96,89) -- (96,113) .. controls (96,117.42) and (92.42,121) .. (88,121) -- (34,121) .. controls (29.58,121) and (26,117.42) .. (26,113) -- cycle ;

\draw   (147,20) .. controls (147,15.58) and (150.58,12) .. (155,12) -- (209,12) .. controls (213.42,12) and (217,15.58) .. (217,20) -- (217,44) .. controls (217,48.42) and (213.42,52) .. (209,52) -- (155,52) .. controls (150.58,52) and (147,48.42) .. (147,44) -- cycle ;

\draw   (438,90) .. controls (438,85.58) and (441.58,82) .. (446,82) -- (500,82) .. controls (504.42,82) and (508,85.58) .. (508,90) -- (508,114) .. controls (508,118.42) and (504.42,122) .. (500,122) -- (446,122) .. controls (441.58,122) and (438,118.42) .. (438,114) -- cycle ;

\draw   (536,25) .. controls (536,20.58) and (539.58,17) .. (544,17) -- (598,17) .. controls (602.42,17) and (606,20.58) .. (606,25) -- (606,49) .. controls (606,53.42) and (602.42,57) .. (598,57) -- (544,57) .. controls (539.58,57) and (536,53.42) .. (536,49) -- cycle ;
 
\draw    (26,89) .. controls (-1,63) and (-20,-8) .. (147,20) ;

\draw    (26,113) .. controls (-10,124) and (3.5,181) .. (205.5,178) .. controls (407.5,175) and (618.5,185) .. (643.5,165) .. controls (668.5,145) and (700,3) .. (606,25) ;
 
\draw    (96,89) .. controls (128,59) and (107,74) .. (147,44) ;

\draw    (96,113) .. controls (134,114) and (248.5,113) .. (290.5,114) ;

\draw    (217,44) .. controls (250.5,36) and (249.5,87) .. (287.5,86) ;
 
\draw    (362.5,114) .. controls (404,114) and (403.5,115) .. (438,114) ;

\draw    (359.5,90) .. controls (401,90) and (403.5,91) .. (438,90) ;

\draw    (508,90) .. controls (515.5,80) and (517.5,65) .. (536,49) ;

\draw    (217,20) -- (271.5,22) ;
 
\draw    (368.5,24) -- (536,25) ;

\draw    (508,114) .. controls (655.5,175) and (646.5,83) .. (606,49) ;

\draw (45,88) node [anchor=north west][inner sep=0.75pt]    {$-a_{1}$};
\draw (175,25) node [anchor=north west][inner sep=0.75pt]    {$a_{2}$};
\draw (440,88) node [anchor=north west][inner sep=0.75pt]    {$-a_{2m-1}$};
\draw (559,28.4) node [anchor=north west][inner sep=0.75pt]    {$a_{2m}$};
\draw (310,108) node [anchor=north west][inner sep=0.75pt]  [font=\Large]  {$\cdots $};
\draw (310,80) node [anchor=north west][inner sep=0.75pt]  [font=\Large]  {$\cdots $};
\draw (310,18) node [anchor=north west][inner sep=0.75pt]  [font=\Large]  {$\cdots $};

\end{tikzpicture}
\caption{The rational link of $\frac{r}s.$}
   \label{L_alpha} 
\end{center}
\end{figure}
where each square is called $a_i$-half twists determined by Figure \ref{square}.
 
\begin{figure}[ht]
\begin{center}
\begin{tikzpicture}[x=0.5pt,y=0.3pt,yscale=-1,xscale=1]

\draw    (230.21,120.52) .. controls (291.63,119.6) and (265.32,16) .. (320.9,16.27) ;

\draw    (278.74,80.13) .. controls (289.08,126.39) and (309.39,121.4) .. (317.08,121.98) ;

\draw    (238.21,14.47) .. controls (259.72,14.98) and (261.06,29.01) .. (269.07,59.75) ;

\draw    (317.08,121.98) .. controls (378.5,121.06) and (337.48,-3.96) .. (393.67,18.34) ;

\draw    (360.86,81.93) .. controls (371.21,128.19) and (387.98,129.32) .. (395.35,118.19) ;

\draw    (320.34,16.27) .. controls (341.85,16.78) and (343.18,30.81) .. (351.2,61.55) ;
 
\draw    (441.56,117.63) .. controls (496.67,142.15) and (471.97,16.21) .. (527.55,16.48) ;

\draw    (488.23,83.6) .. controls (498.57,129.87) and (518.88,124.88) .. (526.57,125.45) ;

\draw    (444.48,17.31) .. controls (465.99,17.81) and (467.7,29.22) .. (475.72,59.96) ;

\draw    (312.23,402.7) .. controls (242.24,402.7) and (289.29,292.42) .. (229.3,289.01) ;
 
\draw    (262.24,356.09) .. controls (251.06,393.61) and (259.3,402.14) .. (229.89,402.14) ;
 
\draw    (310.47,289.01) .. controls (280.47,287.87) and (276.35,315.16) .. (271.06,332.21) ;
 
\draw    (404.87,435.58) -- (531.33,436.15) ;
\draw [shift={(531.33,436.15)}, rotate = 180.26] [color={rgb, 255:red, 0; green, 0; blue, 0 }  ][line width=0.75]    (0,5.59) -- (0,-5.59)(10.93,-3.29) .. controls (6.95,-1.4) and (3.31,-0.3) .. (0,0) .. controls (3.31,0.3) and (6.95,1.4) .. (10.93,3.29)   ;

\draw    (354.88,436.59) -- (228.42,436.84) ;
\draw [shift={(228.42,436.84)}, rotate = 359.89] [color={rgb, 255:red, 0; green, 0; blue, 0 }  ][line width=0.75]    (0,5.59) -- (0,-5.59)(10.93,-3.29) .. controls (6.95,-1.4) and (3.31,-0.3) .. (0,0) .. controls (3.31,0.3) and (6.95,1.4) .. (10.93,3.29)   ;

\draw    (392.81,402.7) .. controls (322.82,402.7) and (369.88,292.42) .. (309.88,289.01) ;
 
\draw    (342.82,356.09) .. controls (331.64,393.61) and (341.64,402.7) .. (312.23,402.7) ;
 
\draw    (391.05,289.01) .. controls (361.05,287.87) and (356.94,315.16) .. (351.64,332.21) ;

\draw    (529.86,401.57) .. controls (459.87,401.57) and (506.92,291.28) .. (446.93,287.87) ;

\draw    (479.87,354.95) .. controls (468.69,392.47) and (476.93,401) .. (447.52,401) ;

\draw    (528.1,287.87) .. controls (498.1,286.74) and (493.98,314.02) .. (488.69,331.08) ;
 
\draw   (10,48.8) .. controls (10,41.73) and (15.73,36) .. (22.8,36) -- (106.2,36) .. controls (113.27,36) and (119,41.73) .. (119,48.8) -- (119,87.2) .. controls (119,94.27) and (113.27,100) .. (106.2,100) -- (22.8,100) .. controls (15.73,100) and (10,94.27) .. (10,87.2) -- cycle ;
 
\draw   (10.67,331.47) .. controls (10.67,324.4) and (16.4,318.67) .. (23.47,318.67) -- (106.87,318.67) .. controls (113.94,318.67) and (119.67,324.4) .. (119.67,331.47) -- (119.67,369.87) .. controls (119.67,376.94) and (113.94,382.67) .. (106.87,382.67) -- (23.47,382.67) .. controls (16.4,382.67) and (10.67,376.94) .. (10.67,369.87) -- cycle ;

\draw    (405.21,151.26) -- (529.67,151.81) ;
\draw [shift={(529.67,151.81)}, rotate = 180.25] [color={rgb, 255:red, 0; green, 0; blue, 0 }  ][line width=0.75]    (0,5.59) -- (0,-5.59)(10.93,-3.29) .. controls (6.95,-1.4) and (3.31,-0.3) .. (0,0) .. controls (3.31,0.3) and (6.95,1.4) .. (10.93,3.29)   ;

\draw    (356.02,152.24) -- (231.56,152.47) ;
\draw [shift={(231.56,152.47)}, rotate = 359.89] [color={rgb, 255:red, 0; green, 0; blue, 0 }  ][line width=0.75]    (0,5.59) -- (0,-5.59)(10.93,-3.29) .. controls (6.95,-1.4) and (3.31,-0.3) .. (0,0) .. controls (3.31,0.3) and (6.95,1.4) .. (10.93,3.29)   ;

\draw (408.29,108.56) node [anchor=north west][inner sep=0.75pt]  [rotate=-359.13]  {$\cdots $};
\draw (404.27,10.81) node [anchor=north west][inner sep=0.75pt]  [rotate=-359.13]  {$\cdots $};
\draw (374.55,426.63) node [anchor=north west][inner sep=0.75pt]    {$a_{i}$};
\draw (406.1,392.31) node [anchor=north west][inner sep=0.75pt]  [rotate=-359.13]  {$\cdots $};
\draw (402.46,280.65) node [anchor=north west][inner sep=0.75pt]  [rotate=-359.13]  {$\cdots $};
\draw (40,45) node [anchor=north west][inner sep=0.75pt]  [font=\Large]  {$-a_{i}$};
\draw (153,48.4) node [anchor=north west][inner sep=0.75pt]  [font=\Large]  {$=$};
\draw (55.11,334.47) node [anchor=north west][inner sep=0.75pt]  [font=\Large]  {$a_{i}$};
\draw (153.67,331.07) node [anchor=north west][inner sep=0.75pt]  [font=\Large]  {$=$};
\draw (375.27,142.33) node [anchor=north west][inner sep=0.75pt]    {$a_{i}$};

\end{tikzpicture}
\caption{$a_i$-half twists}
   \label{square} 
\end{center}
\end{figure}
We denote by $L(\frac{r}s)$ the rational link associated with $\frac{r}s$. See, for example, \cite{KL,KR}. The following theorem is due to H. Schubert in 1956.

\begin{thm}[ {c.f. \cite[Theorem~2]{KL}}]\label{Schubert}
For irreducible fractions $\frac{r}s, ~\frac{r'}{s'}$, the following are equivalent.
\begin{itemize}
\item[(1)] $L(\frac{r}s)$ and $L(\frac{r'}{s'})$ are isotopic. 
\item[(2)] $r=r'$ and either $s \equiv s' \pmod{r}$ or $ss' \equiv 1 \pmod{r}$. 
\end{itemize}
\end{thm}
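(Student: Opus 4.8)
The plan is to translate the statement into the classification of lens spaces by passing to the double cover of $\mathbb{S}^3$ branched along the link. Write $\Sigma_2(L)$ for this branched double cover and $\mathcal{L}(r,s)$ for the lens space with parameters $(r,s)$. The first step is to identify $\Sigma_2\!\left(L\!\left(\frac{r}{s}\right)\right)$ with $\mathcal{L}(r,s)$: reading the twist-region presentation of Figure~\ref{L_alpha}, each block of $a_i$ half-twists lifts to a solid-torus Dehn filling, and the linear plumbing recorded by the Hirzebruch--Jung data $[[c_1,\ldots,c_k]]$ is exactly a surgery description of $\mathcal{L}(r,s)$. This is the standard Montesinos construction for two-bridge links, and it shows that an isotopy $L\!\left(\frac{r}{s}\right)\simeq L\!\left(\frac{r'}{s'}\right)$ induces an orientation-preserving homeomorphism $\mathcal{L}(r,s)\cong\mathcal{L}(r',s')$.

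For the implication $(2)\Rightarrow(1)$ I would exhibit the two symmetries directly on the diagram. The congruence $s\equiv s'\pmod r$ reflects the built-in periodicity $s\mapsto s+r$ of the two-bridge normal form, under which the link (and the lens space) is literally unchanged. The substantive case is $ss'\equiv 1\pmod r$: the modular inverse of $s$ is produced by reversing the continued fraction, $[a_1,\ldots,a_{2m}]\mapsto[a_{2m},\ldots,a_1]$, and this reversal is realized geometrically by rotating the tangle of Figure~\ref{L_alpha} by $\pi$, that is, by reading the twist regions from the opposite end; this is an ambient isotopy of $\mathbb{S}^3$. Hence both arithmetic conditions yield isotopic links.

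The converse $(1)\Rightarrow(2)$ is where the real work lies, and it is carried by two external results. First, the classical Reidemeister--Franz classification of lens spaces, proved via Reidemeister torsion, gives $\mathcal{L}(r,s)\cong\mathcal{L}(r',s')$ if and only if $r=r'$ and $s'\equiv s^{\pm1}\pmod r$; the two cases $s'\equiv s$ and $s'\equiv s^{-1}$ are precisely $s\equiv s'$ and $ss'\equiv 1$ modulo $r$, which is $(2)$. Second, one must secure \emph{faithfulness}: that no information about the link is lost in passing to $\Sigma_2$, so that a homeomorphism of branched covers genuinely forces an isotopy of links. For two-bridge links this holds because a lens space carries a unique genus-one Heegaard splitting up to isotopy (Bonahon--Otal, Hodgson--Rubinstein), and such a splitting is exactly what encodes the two-bridge structure together with the covering involution; the link is therefore recovered from the lens space, and the equivalence of $(1)$ and $(2)$ follows.

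The main obstacle is this faithfulness step together with the lens space classification itself: neither is formal, the torsion computation being what rules out spurious coincidences among the $\mathcal{L}(r,s)$ and the uniqueness of genus-one splittings being a genuinely three-dimensional input. Once both are granted the proof is short, with the easy direction entirely diagrammatic. It is worth noting that the continued-fraction reversal driving $(2)\Rightarrow(1)$ is the same palindromic operation that governs the reciprocal symmetries of $\flS_{\frac{r}{s}}(q)$ and $\flR_{\frac{r}{s}}(q)$ studied above, so Schubert's dichotomy $s\equiv s'$ versus $ss'\equiv 1$ runs parallel to the arithmetic of the left $q$-deformed rationals.
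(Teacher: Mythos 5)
The paper does not prove this statement: it is Schubert's 1956 classification of rational links, quoted from Kauffman--Lambropoulou \cite{KL} and used as a black box (in Theorem~\ref{J-L} and Lemma~\ref{arithmetic of flat lem}). So there is no internal proof to compare against, and your sketch has to be judged as a proof of the classical theorem. What you outline is indeed the standard modern argument: identify the double branched cover of $L(\frac{r}{s})$ with the lens space $\mathcal{L}(r,s)$ via the plumbing/surgery description, quote the Reidemeister--Franz classification for $(1)\Rightarrow(2)$, and realize the arithmetic symmetries diagrammatically for $(2)\Rightarrow(1)$. This is sound in outline, and it is essentially how the result is proved in the literature.

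Two points need repair. First, the ``faithfulness'' input (standardness of the covering involution on a lens space, via uniqueness of the genus-one Heegaard splitting, Bonahon--Otal and Hodgson--Rubinstein) is misplaced: for $(1)\Rightarrow(2)$ an isotopy of links already induces an orientation-preserving homeomorphism of the branched covers, and the lens space classification then gives $r=r'$ and $s'\equiv s^{\pm 1}\pmod r$ with no further input. Faithfulness would be needed only if you wanted to deduce $(2)\Rightarrow(1)$ from the covers; since you prove that direction diagrammatically, you can drop it (or keep it and drop the diagrammatic argument, but not attribute it to the wrong implication). Second, there is a sign issue in the palindrome step: with the even-length convention $[a_1,\ldots,a_{2m}]$ used in this paper, reversing the continued fraction sends $\frac{r}{s}$ to $\frac{r}{s'}$ with $ss'\equiv -1\pmod r$, not $+1$ (take the transpose of $M(a_1,\ldots,a_{2m})$; this is exactly the Remark following Lemma~\ref{arithmetic of flat lem}). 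To realize the modular \emph{inverse} by the $\pi$-rotation you must pass to an odd-length expansion $[a_1,\ldots,a_{2m}-1,1]$, or otherwise account for the mirror, as in the Palindrome Theorem of \cite{KL}. With these two adjustments your argument is correct.
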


As a useful isotopy invariant for an oriented link $L$ in $\mathbb{S}^3$, the Jones polynomial $V_L(t)~\in \ZZ[t^{\pm 1}]\cup t^{\frac12}\ZZ[t^{\pm 1}]$ is well-studied. 
Lee and Schiffler~\cite[Proposition 1.2 (b)]{KR} introduced the following normalization $J_{\alpha }(q)$ of the Jones polynomial $V_{L(\alpha )}(t)$ of a rational link $L(\alpha )$: 
\begin{equation}\label{def normalized Jones}
 J_{\alpha }(q):=\pm t^{-h}V_{L(\alpha)}(t)|_{t=-q^{-1}},
\end{equation}
where $\pm t^{h}$ is the leading term of $V_{L(\alpha)}(t)$. 
This indicates the normalization such that the constant term is $1$ as a polynomial in $q$.

\medskip

\begin{thm}[{ \cite[Proposition A.1]{MO1}}]\label{Jones}
For a rational number $\alpha >1$,  the normalized Jones polynomial $J_{\alpha}(q)$ can be computed by 
\begin{equation}\label{eq2-6}
J_{\alpha }(q)=q\shR_{\alpha }(q)+(1-q)\shS_{\alpha }(q).  
\end{equation} 
\end{thm}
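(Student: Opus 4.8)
The plan is to evaluate the Kauffman bracket of the rational link $L(\alpha)$ by a transfer-matrix computation and to recognize the resulting $2\times 2$ matrices as $R_q$ and $S_q$ after the substitution $t=-q^{-1}$. I would work throughout modulo the equivalence $\equiv$ of Convention~\ref{equiv} (allowing also the half-integer powers of $t$ produced by $A^{\pm 1}$), and fix the single remaining scalar only at the very end. This last step is essentially free, because the right-hand side of \eqref{eq2-6} already has constant term $1$: indeed $\shR_\alpha(0)=\shS_\alpha(0)=1$ for $\alpha>1$, so $q\shR_\alpha(q)+(1-q)\shS_\alpha(q)$ equals $1$ at $q=0$, matching the normalization $J_\alpha(0)=1$ built into \eqref{def normalized Jones}.

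First I would record the Kauffman bracket of the rational tangle $T_\alpha$ with $\alpha=[[c_1,\dots,c_k]]$ as a vector in the two-dimensional skein module of the $4$-punctured disk, in the basis given by the two crossingless tangles $[0]$ and $[\infty]$. Adding one half-twist and performing the $90^\circ$ rotation act on this vector by explicit matrices coming from the skein relation $\langle L_\times\rangle=A\langle L_0\rangle+A^{-1}\langle L_\infty\rangle$ and the loop value $\delta=-A^2-A^{-2}$. The key computational step is to verify that, after setting $A^{-4}=t=-q^{-1}$ and rescaling by units, these transfer matrices become $R_q$ and $S_q$. Granting this, the product of transfer matrices assembling $T_\alpha$ is $M^{-}_q(c_1,\dots,c_k)$ up to $\equiv$, so by Proposition~\ref{negative version} the bracket vector of $T_\alpha$ is $\equiv(\shR_\alpha(q),\shS_\alpha(q))^{\mathrm{t}}$.

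Next I would take the numerator closure to pass from $T_\alpha$ to $L(\alpha)$. In the chosen basis this pairs the bracket vector against an $\alpha$-independent covector $(a(q),b(q))$, producing $a(q)\shR_\alpha(q)+b(q)\shS_\alpha(q)$ up to $\equiv$. The entries $a,b$ come from the bracket of the unknot and from $\delta=-(t^{1/2}+t^{-1/2})$, which at $t=-q^{-1}$ is a unit multiple of $1-q$; the relative factor between the two entries is supplied by the writhe correction $(-A)^{-3w}$, and one finds $(a,b)=(q,1-q)$ up to an overall unit. (Alternatively one may simply pin $a,b$ down by evaluating two base cases.) Hence $J_\alpha(q)\equiv q\shR_\alpha(q)+(1-q)\shS_\alpha(q)$, and the normalization $J_\alpha(0)=1$ removes the remaining unit, giving \eqref{eq2-6}.

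For the base case that fixes the sign conventions once and for all I would take $\alpha=n\in\ZZ_{>1}$, where $L(n)$ is the $(2,n)$-torus link; here $\shS_n(q)=1$ and $\shR_n(q)=[n]_q$, so the right-hand side of \eqref{eq2-6} is $q[n]_q+(1-q)$, which one checks directly against the normalized Jones polynomial (e.g.\ $J_2(q)=1+q^2$ and $J_3(q)=1+q^2+q^3$). The main obstacle is not the matrix bookkeeping, which is routine once a basis is fixed, but the normalization: the Kauffman bracket is only a regular-isotopy invariant, so the writhe/framing factor and the passage from half-integer powers of $t$ (and the $\sqrt{-1}$ introduced by $t^{1/2}$ at $t=-q^{-1}$) to honest integer powers of $q$ must be shown to cancel. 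Performing the entire computation modulo $\equiv$ and invoking $J_\alpha(0)=1$ at the end is exactly what circumvents this difficulty.
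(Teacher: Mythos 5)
The paper does not prove this statement: Theorem~\ref{Jones} is imported verbatim from \cite[Proposition~A.1]{MO1} and used as a black box (the authors even remark that every known proof of Theorem~\ref{J-L} relies on it), so there is no in-paper argument to compare against. Your sketch is essentially the standard proof from the literature (the appendix of \cite{MO1}, resting on the skein-theoretic matrix formulas of \cite{KR} and \cite{KW}): compute the Kauffman bracket of the rational tangle as a vector in the rank-two skein module of the four-punctured disk, identify the twist and rotation transfer matrices with $R_q$ and $S_q$ up to units after $t=-q^{-1}$, realize the numerator closure as pairing with a fixed covector, and kill the residual unit at the end. Your normalization endgame is correct and genuinely airtight: if $f,g\in\ZZ[q]$ satisfy $f\equiv g$ (even allowing the extra units $\pm A^{k}$ coming from writhe corrections and $t^{1/2}$) and $f(0)=g(0)=1$, then $f=g$; and the right-hand side of \eqref{eq2-6} does have constant term $1$ since $\shR_\alpha(0)=\shS_\alpha(0)=1$ for $\alpha>1$, while $J_\alpha(0)=1$ by the normalization \eqref{def normalized Jones}. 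Your base cases ($J_2(q)=1+q^2$, $J_3(q)=1+q^2+q^3$ against $q[n]_q+(1-q)$) check out and correctly pin down the covector $(q,1-q)$ rather than $(1-q,q)$. The one substantive gap is that the central computation --- that the half-twist and rotation matrices, in the $\{[0],[\infty]\}$ basis with $\delta=-A^2-A^{-2}$ and $A^{-4}=-q^{-1}$, really do become $R_q$ and $S_q$ up to units --- is asserted rather than carried out; since that identification \emph{is} the theorem, what you have is a correct and standard proof framework rather than a complete proof. The assertion itself is true, so filling in that one matrix computation would complete the argument.
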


 Now, we can give a new proof of the following result. This result was shown by Bapat, Becker and Licata \cite[Theorem~A3]{BBL} by considering a homological interpretation of the left $q$-deformed rational numbers, and the first author of the present paper gave a combinatorial proof by using $q$-deformed Farey sums and induction on the size of negative continued fractions (c.f \cite[Theorem~4.2]{XR2}). We remark that all known proofs (including ours) use Theorem~\ref{Jones}.

\begin{thm}[c.f. {\cite[Theorem~A3]{BBL}, see also \cite[Theorem~4.2]{XR2}}]\label{J-L}
For a rational number $\alpha>1$, we have 
$$J_\alpha(q)=\flR_\alpha (q)^\vee.$$
\end{thm}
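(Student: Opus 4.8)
The plan is to read the Lee--Schiffler formula of Theorem~\ref{Jones} through the $q$-transpose, so that the right-hand side $q\shR_\alpha+(1-q)\shS_\alpha$ becomes literally the numerator of a \emph{left} $q$-deformed rational attached to the reversed continued fraction of $\alpha$. Concretely, write $\alpha=\frac rs=[a_1,\ldots,a_{2m}]$ and let $A:=M_q(a_1,\ldots,a_{2m})$, which by Lemma~\ref{reduced form} is the reduced representative of $\alpha$; since $A(\tfrac10)=[\alpha]^\sharp_q$, its first column is $\equiv(\shR_\alpha,\shS_\alpha)^{\mathsf T}$, and after scaling by a suitable $\pm q^{n}$ so that the $(1,1)$-entry is exactly $\shR_\alpha$, coprimality forces the $(2,1)$-entry to be $\shS_\alpha$. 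Thus $A=\left(\begin{smallmatrix}\shR_\alpha & \cV\\ \shS_\alpha & \cU\end{smallmatrix}\right)$ with $r>v>0$, where $v:=\cV(1)$.

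The first key step is a one-line computation: the top entry of $A^{T_q}\left(\begin{smallmatrix}1\\ 1-q\end{smallmatrix}\right)$ equals $\shR_\alpha+q^{-1}(1-q)\shS_\alpha$, so multiplying by $q$ and invoking Theorem~\ref{Jones} gives $J_\alpha\equiv$ (numerator of $A^{T_q}$ applied to $[\tfrac10]^\flat_q=\tfrac1{1-q}$). This is exactly where the $q$-transpose earns its keep: it carries $\shS_\alpha$ from the $(2,1)$-slot into the $(1,2)$-slot (up to the weight $q^{\pm1}$), manufacturing the combination $q\shR_\alpha+(1-q)\shS_\alpha$. The second step identifies that numerator. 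Using $(XY)^{T_q}=Y^{T_q}X^{T_q}$ together with $R_q^{T_q}\equiv L_q$ and $L_q^{T_q}\equiv R_q$ (from the proof of Lemma~\ref{q-transpose}), the transpose reverses the word, giving $A^{T_q}\equiv M_q(a_{2m},a_{2m-1},\ldots,a_1)$. Hence $A^{T_q}$ is again a genuine reduced $M_q$-matrix, namely that of the reversed regular continued fraction $\tfrac rv:=[a_{2m},\ldots,a_1]>1$, so $A^{T_q}\left(\tfrac1{1-q}\right)=[\tfrac rv]^\flat_q$ and the numerator above is $\equiv\flR_{r/v}$. Combining the two steps yields $J_\alpha\equiv\flR_{r/v}$.

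It then remains to prove $\flR_{r/v}=\flR_\alpha^\vee$, i.e. that reversing the continued fraction reciprocates $\flR$; this is the heart of the matter and the step I expect to be the main obstacle, since the passage $\tfrac rs\mapsto\tfrac r{r-s}$ underlying $(-)^\vee$ is orientation-reversing and cannot be realized inside $\PSL_q(2,\ZZ)$ itself. From $\det A\equiv1$ I read off $vs\equiv-1\pmod r$, and I would feed this into the arithmetic of the left $q$-rationals: Theorem~\ref{arithmetic of flat} (the $\equiv-1$ case, with denominator $r$ and numerators $v$ and $s$) gives $\flS_{v/r}=\flS_{s/r}^\vee$, while Proposition~\ref{basic for flat}(3) translates between $\flR$ and $\flS$ via $\flR_{r/v}\equiv\flS_{v/r}^\vee$ and $\flR_{r/s}^\vee\equiv\flS_{s/r}$. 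Chaining these gives $\flR_{r/v}\equiv\flS_{v/r}^\vee=(\flS_{s/r}^\vee)^\vee=\flS_{s/r}\equiv\flR_{r/s}^\vee=\flR_\alpha^\vee$. (Alternatively, one can establish this reversal--reciprocity combinatorially, showing that $Q^{\flat,\cR}_{r/v}$ and the opposite quiver of $Q^{\flat,\cR}_\alpha$ have equal closure polynomials via Theorem~\ref{theorem:BBL closure} and \eqref{op-cl}, which is precisely where the circular-fence-poset theme enters.)

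Finally, I would upgrade every $\equiv$ to a genuine equality by normalization. Theorem~\ref{Jones} gives $J_\alpha(0)=1$, and $\flR_\alpha$ is monic with $\flR_\alpha(0)=1$ (its leading coefficient is the count of the unique top closure), so $\flR_\alpha^\vee(0)=1$; two elements of $\ZZ[q]$ with constant term $1$ that agree up to $\equiv$ must coincide, giving $J_\alpha=\flR_\alpha^\vee$. The only point worth checking en route is the integer case $v=1$ (when $\tfrac rv=r$), but this is harmless: $A^{T_q}\equiv M_q(a_{2m},\ldots,a_1)$ remains a valid reduced word, $\flR_r$ is still monic with constant term $1$, and the congruence $vs\equiv-1$ still drives Theorem~\ref{arithmetic of flat}.
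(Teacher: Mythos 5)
Your first three steps are correct and give a clean derivation of $J_\alpha\equiv\flR_{r/v}$, where $\frac rv=[a_{2m},\ldots,a_1]$ is the reversed continued fraction; the use of the $q$-transpose to reverse the word and move $\shS_\alpha$ into the $(1,2)$-slot is sound. The gap is exactly where you predicted it: the reversal--reciprocity $\flR_{r/v}=\flR_{r/s}^\vee$ for $vs\equiv-1\pmod r$. Your justification via Theorem~\ref{arithmetic of flat} is circular, because Theorem~\ref{arithmetic of flat}(1) is deduced from Lemma~\ref{arithmetic of flat lem}, whose proof uses precisely the identity $\flR_{r/s}(q)=J_{r/s}(q)^\vee$ that you are trying to establish. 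The only reciprocity proved independently of Theorem~\ref{J-L} is Proposition~\ref{s+t=r}, which covers the condition $v\equiv -s\pmod r$, not $vs\equiv-1\pmod r$. Your parenthetical combinatorial alternative does not close the gap either: $Q^\flat(b_{2m},\ldots,b_1)$ is \emph{not} the opposite quiver of $Q^\flat(b_1,\ldots,b_{2m})$ (the $2$-cycle is attached at one fixed end, so the mirror and opposite operations do not match up), so Theorem~\ref{theorem:BBL closure} together with \eqref{op-cl} does not yield the identity; indeed, the remark after Lemma~\ref{arithmetic of flat lem} derives \eqref{property of Q^flat} \emph{from} that lemma and explicitly poses a direct bijective proof as an open problem.

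The missing ingredient is Schubert's theorem (Theorem~\ref{Schubert}), the one genuinely topological input in the paper's proof, which you never invoke. Your argument can be repaired with it: running your steps on $\frac r{r-v}$ (whose reduced matrix has $(1,2)$-entry reducing to $r-s$ at $q=1$, since $v''v\equiv 1\pmod r$ forces $v''=r-s$) gives $J_{r/(r-v)}\equiv\flR_{r/(r-s)}$; since $s(r-v)\equiv-sv\equiv 1\pmod r$, Theorem~\ref{Schubert} gives $J_{r/s}=J_{r/(r-v)}$, whence $\flR_{r/v}\equiv\flR_{r/(r-s)}=\flR_{r/s}^\vee$ by Proposition~\ref{s+t=r}(2), and the normalization argument you give at the end finishes the proof. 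Once repaired this way, your route is essentially a re-packaging of the paper's own proof, which likewise combines Theorem~\ref{Jones}, the row identities coming from the $q$-transpose, Theorem~\ref{Schubert}, and Proposition~\ref{s+t=r}(2).
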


\begin{proof}
We can take an irreducible fraction $\frac{r}s >1$ with $\alpha=\frac{r}{r-s}$.  
If $\frac{r}s=[a_1, \ldots, a_{2m}]$, we have 
$$M(a_1, \ldots, a_{2m})=\begin{pmatrix}
r & t \\
s & u \\
\end{pmatrix}$$
for some $u \in \NN$, and 
$$M_q(a_1, \ldots, a_{2m}) = 
\begin{pmatrix}
q\shR_{\frac{r}s}(q) & \shR_{\frac{t}u}(q) \\
q \shS_{\frac{r}s}(q) &  \shS_{\frac{t}u}(q)\\
\end{pmatrix}.$$
By Corollaries~\ref{3rd proof for R} and \ref{row of a general element}, we have $\shR_{\frac{r}s}(q) = \shR_{\frac{r}t}(q)$ and $\shR_{\frac{t}u}(q) = \shS_{\frac{r}t}(q)$. Hence we have  
$$\flR_{\frac{r}s}(q) = q\shR_{\frac{r}s}(q) + (1-q) \shR_{\frac{t}u}(q)
=q\shR_{\frac{r}t}(q) + (1-q)\shS_{\frac{r}t}(q)
=J_{\frac{r}t}(q),$$
where the last equality follows from Theorem~\ref{Jones}. 
Since $t(r-s) \equiv -st \equiv 1 \pmod{r}$, we have $J_{\frac{r}t}(q)=J_{\frac{r}{r-s}}(q)=J_\alpha(q)$ by Theorem~\ref{Schubert}. By Proposition~\ref{s+t=r}~(2), we have $\flR_{\frac{r}s}(q)=\flR_{\frac{r}{r-s}}(q)^\vee$. Summing up, we have 
$$\flR_\alpha(q)^\vee = \flR_{\frac{r}{r-s}}(q)^\vee=\flR_{\frac{r}s}(q)=J_{\frac{r}t}(q)=J_\alpha(q).$$
\end{proof}

\begin{lem}\label{arithmetic of flat lem} 
 For irreducible fractions $\frac{r}s, \frac{r}{s'} >1$ with $ss' \equiv 1 \pmod{r}$ {\rm (}resp. $ss' \equiv -1 \pmod{r}${\rm )},  we have $\flR_{\frac{r}s}(q) = \flR_{\frac{r}{s'}}(q)$ {\rm (}resp. $\flR_{\frac{r}s}(q) = \flR_{\frac{r}{s'}}(q)^\vee${\rm )}.   
\end{lem}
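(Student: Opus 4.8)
The plan is to reduce everything to the already-proved identity $J_\alpha(q)=\flR_\alpha(q)^\vee$ of Theorem~\ref{J-L}, together with Schubert's classification (Theorem~\ref{Schubert}) and Proposition~\ref{s+t=r}(2). First I would treat the case $ss'\equiv 1\pmod r$. Here the hypotheses $\frac rs,\frac r{s'}>1$ give $0<s,s'<r$ with $\gcd(r,s)=\gcd(r,s')=1$, so the second alternative of Theorem~\ref{Schubert}(2) applies and the rational links $L(\frac rs)$ and $L(\frac r{s'})$ are isotopic. Since the normalized Jones polynomial is an isotopy invariant with a canonical normalization (constant term $1$), this forces $J_{\frac rs}(q)=J_{\frac r{s'}}(q)$; this is exactly the deduction already made inside the proof of Theorem~\ref{J-L}. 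Applying Theorem~\ref{J-L} to both sides gives $\flR_{\frac rs}(q)^\vee=\flR_{\frac r{s'}}(q)^\vee$.

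The next step is to remove the reciprocals. Because $\frac rs,\frac r{s'}>1$, we have $\flR_{\frac rs}(q),\flR_{\frac r{s'}}(q)\in\ZZ[q]$ with constant term $1$ (recorded right after the definition of the $q$-deformed rationals), so the operation $(-)^\vee$ is an involution on each of them. Hence $\flR_{\frac rs}(q)^\vee=\flR_{\frac r{s'}}(q)^\vee$ yields $\flR_{\frac rs}(q)=\flR_{\frac r{s'}}(q)$, settling the first case.

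For the case $ss'\equiv -1\pmod r$, I would introduce the auxiliary denominator $s'':=r-s'$. Then $0<s''<r$ and $\gcd(r,s'')=\gcd(r,s')=1$, so $\frac r{s''}>1$ is again an irreducible fraction, and $ss''=s(r-s')\equiv -ss'\equiv 1\pmod r$. The first case therefore gives $\flR_{\frac rs}(q)=\flR_{\frac r{s''}}(q)$. Now $s''+s'=r$, so Proposition~\ref{s+t=r}(2) applies to the pair $\frac r{s''},\frac r{s'}$ and yields $\flR_{\frac r{s''}}(q)=\flR_{\frac r{s'}}(q)^\vee$. Chaining the two identities gives $\flR_{\frac rs}(q)=\flR_{\frac r{s'}}(q)^\vee$, as desired.

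The argument is essentially a bookkeeping exercise once Theorem~\ref{J-L} is in hand; the only points that require care are (i) checking that the range and coprimality conditions are preserved when passing to $s''=r-s'$, so that Proposition~\ref{s+t=r}(2) and the ``$>1$'' hypotheses genuinely apply, and (ii) the legitimacy of cancelling $(-)^\vee$, which rests on the normalization $\flR_\alpha(0)=1$ for $\alpha>1$. The one conceptual input I am importing rather than reproving is that isotopic rational links share the same normalized Jones polynomial, but this is precisely the implication already used in the proof of Theorem~\ref{J-L}, so no new topological work is needed.
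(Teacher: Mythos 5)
Your proposal is correct and follows essentially the same route as the paper's own proof: Schubert's classification gives isotopy of $L(\frac rs)$ and $L(\frac r{s'})$ when $ss'\equiv 1\pmod r$, invariance of the normalized Jones polynomial combined with Theorem~\ref{J-L} gives $\flR_{\frac rs}(q)=\flR_{\frac r{s'}}(q)$, and the case $ss'\equiv -1\pmod r$ is reduced to the first case via Proposition~\ref{s+t=r}(2). Your write-up is slightly more explicit about the auxiliary denominator $s''=r-s'$ and the cancellation of $(-)^\vee$, but there is no substantive difference.
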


\begin{proof}
If $ss' \equiv 1 \pmod{r}$, then the links $L(\frac{r}s)$ and $L(\frac{r}{s'})$ are isotopic by Theorem~\ref{Schubert}, and have the same (normalized) Jones polynomial. By Theorem~\ref{Jones}, we have  
$$\flR_{\frac{r}s}(q) =J_{\frac{r}s}(q)^\vee=J_{\frac{r}{s'}}(q)^\vee=\flR_{\frac{r}{s'}}(q).$$
The case $ss' \equiv -1 \pmod{r}$ follows from the above equation and Proposition~\ref{s+t=r}~(2). 
\end{proof}

\begin{rem}

If $ss' \equiv -1 \pmod{r}$, then it is known that $\frac{r}s=[a_1, \ldots, a_{2m}]$ implies $\frac{r}{s'}=[a_{2m}, \ldots, a_1]$ $($to see this,  take the transpose of $M(a_1, \ldots, a_{2m})$$)$. Hence, the $ss' \equiv -1 \pmod{r}$ case of the above lemma states that, for a tuple of integers $(b_1,b_2,\ldots, b_{2m})$ with $b_1, b_{2m}\geq 0$, $b_2,\ldots,b_{2m-1} >0$, the equation 
\begin{equation}\label{property of Q^flat}
\cl(Q^\flat(b_1, \ldots, b_{2m}),q) =\cl(Q^\flat(b_{2m}, \ldots, b_1),q)^\vee
\end{equation}
holds. It is easy to see that \eqref{property of Q^flat} is equivalent to the following equation for $(b_1,\ldots, b_{2m+1})$ with $b_1, b_{2m+1}\geq 0$, $b_2,\ldots,b_{2m} >0$:
\begin{equation}\label{flat quiver of odd length}
\cl(Q^\flat(b_1, \ldots, b_{2m+1}),q) =\cl(Q^\flat(b_{2m+1}, \ldots, b_1),q).
\end{equation}
It might be an interesting problem to find a bijective proof of the above equations. 
\end{rem}

\
\begin{exam}
This is an example of the equation \eqref{flat quiver of odd length}. Considering two tuples of integers $(1,2,0)$ and $(0,2,1),$ we can draw $Q^\flat(1,2,0)$ (for convenience, we number each vertex) and count its $l$-closures as follows.

\begin{figure}[ht]
\begin{center}
\begin{tikzpicture}[x=0.6pt,y=0.5pt,yscale=-1,xscale=1]

\draw    (74.54,42.83) -- (34.71,43.58) ;
\draw [shift={(32.71,43.62)}, rotate = 358.92] [color={rgb, 255:red, 0; green, 0; blue, 0 }  ][line width=0.75]    (10.93,-3.29) .. controls (6.95,-1.4) and (3.31,-0.3) .. (0,0) .. controls (3.31,0.3) and (6.95,1.4) .. (10.93,3.29)   ;

\draw    (167.18,44.13) -- (207.01,43.42) ;
\draw [shift={(209.01,43.38)}, rotate = 178.98] [color={rgb, 255:red, 0; green, 0; blue, 0 }  ][line width=0.75]    (10.93,-3.29) .. controls (6.95,-1.4) and (3.31,-0.3) .. (0,0) .. controls (3.31,0.3) and (6.95,1.4) .. (10.93,3.29)   ;

\draw    (100.2,43.98) -- (140.03,43.27) ;
\draw [shift={(142.03,43.23)}, rotate = 178.98] [color={rgb, 255:red, 0; green, 0; blue, 0 }  ][line width=0.75]    (10.93,-3.29) .. controls (6.95,-1.4) and (3.31,-0.3) .. (0,0) .. controls (3.31,0.3) and (6.95,1.4) .. (10.93,3.29)   ;

\draw    (231.86,36.76) .. controls (252.7,-5.83) and (273.41,12.07) .. (285.31,34.7) ;
\draw [shift={(286.21,36.45)}, rotate = 243.39] [color={rgb, 255:red, 0; green, 0; blue, 0 }  ][line width=0.75]    (10.93,-3.29) .. controls (6.95,-1.4) and (3.31,-0.3) .. (0,0) .. controls (3.31,0.3) and (6.95,1.4) .. (10.93,3.29)   ;

\draw    (284.94,49.63) .. controls (262.9,76.46) and (245.25,72.41) .. (234.22,49.84) ;
\draw [shift={(233.38,48.07)}, rotate = 65.55] [color={rgb, 255:red, 0; green, 0; blue, 0 }  ][line width=0.75]    (10.93,-3.29) .. controls (6.95,-1.4) and (3.31,-0.3) .. (0,0) .. controls (3.31,0.3) and (6.95,1.4) .. (10.93,3.29)   ;

\draw (10.12,40) node [anchor=north west][inner sep=0.75pt]  [font=\Large]  {$\circ $};
\draw (76.88,40) node [anchor=north west][inner sep=0.75pt]  [font=\Large]  {$\circ $};
\draw (143.93,40) node [anchor=north west][inner sep=0.75pt]  [font=\Large]  {$\circ $};
\draw (211.97,40) node [anchor=north west][inner sep=0.75pt]  [font=\Large]  {$\circ $};
\draw (287.26,40) node [anchor=north west][inner sep=0.75pt]  [font=\Large]  {$\circ $};
\draw (12,20) node [anchor=north west][inner sep=0.75pt]    {$1$};
\draw (78,20) node [anchor=north west][inner sep=0.75pt]    {$2$};
\draw (145,20) node [anchor=north west][inner sep=0.75pt]    {$3$};
\draw (212,20) node [anchor=north west][inner sep=0.75pt]    {$4$};
\draw (290,20) node [anchor=north west][inner sep=0.75pt]    {$5$};
\end{tikzpicture}
\end{center}
\caption{The quiver $Q^\flat(1,2,0).$}
   \label{quiver_(120)} 
\end{figure}

\begin{table}[ht]
 \centering
  \begin{tabular}{c@{\hspace{3.0cm}}ll}
   \hline \hline
 $l$  &  $l$-closures &  the number of  $l$-closures
\\
   \hline \hline
  $0$  &  $\emptyset$ &\qquad \qquad $1$  \\
   \hline
   $1$  &  $\{1\}$ & \qquad \qquad$1$  \\ 
    \hline
    $2$  &  $\{4,5\}$ & \qquad \qquad$1$  \\
    \hline
     $3$  &  $\{1,4,5\}, \{3,4,5\}$ & \qquad \qquad$2$  \\
    \hline
       $4$  &  $\{1,3,4,5\}$ & \qquad \qquad$1$  \\
    \hline

       $5$  &  $\{1,2,3,4,5\}$ & \qquad \qquad$1$  \\
    \hline
  \end{tabular}\label{table:001}
  \caption{The $l$-closure of $Q^\flat(1,2,0)$}
\end{table}
Hence we have
$$\cl(Q^\flat(1,2,0))=1+q+q^2+2q^3+q^4+q^5.$$

On the other hand, 
we can draw $Q^\flat(0,2,1)$, and count its $l$-closures as follows. 
\begin{figure}[ht]
\begin{center}
\begin{tikzpicture}[x=0.6pt,y=0.5pt,yscale=-1,xscale=1]

\draw    (211.34,43.23) -- (171.51,43.98) ;
\draw [shift={(169.51,44.02)}, rotate = 358.92] [color={rgb, 255:red, 0; green, 0; blue, 0 }  ][line width=0.75]    (10.93,-3.29) .. controls (6.95,-1.4) and (3.31,-0.3) .. (0,0) .. controls (3.31,0.3) and (6.95,1.4) .. (10.93,3.29)   ;

\draw    (33.18,44.13) -- (73.01,43.42) ;
\draw [shift={(75.01,43.38)}, rotate = 178.98] [color={rgb, 255:red, 0; green, 0; blue, 0 }  ][line width=0.75]    (10.93,-3.29) .. controls (6.95,-1.4) and (3.31,-0.3) .. (0,0) .. controls (3.31,0.3) and (6.95,1.4) .. (10.93,3.29)   ;

\draw    (100.2,43.98) -- (140.03,43.27) ;
\draw [shift={(142.03,43.23)}, rotate = 178.98] [color={rgb, 255:red, 0; green, 0; blue, 0 }  ][line width=0.75]    (10.93,-3.29) .. controls (6.95,-1.4) and (3.31,-0.3) .. (0,0) .. controls (3.31,0.3) and (6.95,1.4) .. (10.93,3.29)   ;

\draw    (231.86,36.76) .. controls (252.7,-5.83) and (273.41,12.07) .. (285.31,34.7) ;
\draw [shift={(286.21,36.45)}, rotate = 243.39] [color={rgb, 255:red, 0; green, 0; blue, 0 }  ][line width=0.75]    (10.93,-3.29) .. controls (6.95,-1.4) and (3.31,-0.3) .. (0,0) .. controls (3.31,0.3) and (6.95,1.4) .. (10.93,3.29)   ;
 
\draw    (284.94,49.63) .. controls (262.9,76.46) and (245.25,72.41) .. (234.22,49.84) ;
\draw [shift={(233.38,48.07)}, rotate = 65.55] [color={rgb, 255:red, 0; green, 0; blue, 0 }  ][line width=0.75]    (10.93,-3.29) .. controls (6.95,-1.4) and (3.31,-0.3) .. (0,0) .. controls (3.31,0.3) and (6.95,1.4) .. (10.93,3.29)   ;

\draw (10.12,40) node [anchor=north west][inner sep=0.75pt]  [font=\Large]  {$\circ $};
\draw (76.88,40) node [anchor=north west][inner sep=0.75pt]  [font=\Large]  {$\circ $};
\draw (143.93,40) node [anchor=north west][inner sep=0.75pt]  [font=\Large]  {$\circ $};
\draw (211.97,40) node [anchor=north west][inner sep=0.75pt]  [font=\Large]  {$\circ $};
\draw (287.26,40) node [anchor=north west][inner sep=0.75pt]  [font=\Large]  {$\circ $};
\draw (12,20) node [anchor=north west][inner sep=0.75pt]    {$1$};
\draw (78,20) node [anchor=north west][inner sep=0.75pt]    {$2$};
\draw (145,20) node [anchor=north west][inner sep=0.75pt]    {$3$};
\draw (212,20) node [anchor=north west][inner sep=0.75pt]    {$4$};
\draw (290,20) node [anchor=north west][inner sep=0.75pt]    {$5$};
\end{tikzpicture}
\end{center}
\caption{The quiver $Q^\flat(0,2,1).$}
   \label{quiver_(021)} 
\end{figure}
\begin{table}[ht]
 \centering
  \begin{tabular}{c@{\hspace{3.0cm}}ll}
   \hline \hline
  $l$  &  $l$-closures & the number of  $l$-closures\\ 
   \hline \hline
  $0$  &  $\emptyset$ &\qquad \qquad $1$  \\
   \hline
   $1$  &  $\{3\}$ & \qquad \qquad$1$  \\ 
    \hline
    $2$  &  $\{2,3\}$ & \qquad \qquad$1$  \\
    \hline
     $3$  &  $\{1,2,3\}, \{3,4,5\}$ &\qquad \qquad $2$  \\
    \hline
        $4$  &  $\{2,3,4,5\}$ & \qquad \qquad$1$  \\
    \hline
       $5$  &  $\{1,2,3,4,5\}$ & \qquad \qquad$1$  \\
    \hline
  \end{tabular}
\caption{The $l$-closure of $Q^\flat(0,2,1)$} 
\label{table:002}
\end{table}

Hence we have
$$\cl(Q^\flat(0,2,1))=1+q+q^2+2q^3+q^4+q^5=\cl(Q^\flat(1,2,0)).$$
\end{exam}

\medskip

\begin{thm}\label{arithmetic of flat} 
The following hold. 
\begin{itemize}
\item[(1)]  For irreducible fractions $\frac{r}s, \frac{r'}s$ with $rr' \equiv 1 \pmod{s}$ $($resp. $rr' \equiv -1 \pmod{s}$~$)$,  we have $\flS_{\frac{r}s}(q)=\flS_{\frac{r^\prime}{s}}(q)$ {\rm (}resp. $\flS_{\frac{r}s}(q)=\flS_{\frac{r^\prime}{s}}(q)^\vee$~$)$.
\item[(2)]  For irreducible fractions $\frac{r}s, \frac{r}{s'}$ with $ss' \equiv 1 \pmod{r}$ \rm{(}resp. $ss' \equiv -1 \pmod{r}$\rm{)},  we have $\flR_{\frac{r}s}(q) \equiv \flR_{\frac{r}{s'}}(q)$ \rm{(}resp. $\flR_{\frac{r}s}(q) \equiv \flR_{\frac{r}{s'}}(q)^\vee$ \rm{)}.   
(That is, if we replace $=$ by $\equiv$,  the statement of Lemma~\ref{arithmetic of flat lem} holds without the restriction that $\frac{r}s, \frac{r}{s'} >1$.)
\end{itemize}
\end{thm}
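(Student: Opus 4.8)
The plan is to deduce both parts from Lemma~\ref{arithmetic of flat lem}, which already gives part~(2) under the extra hypothesis $\frac{r}s,\frac{r}{s'}>1$, by two maneuvers: removing the restriction $>1$ for part~(2), and passing from numerators to denominators for part~(1) via the reciprocal relation in Proposition~\ref{basic for flat}~(3). I would prove part~(2) first, then bootstrap part~(1) from it.

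For part~(2), I would first record the \emph{shift invariance} $\flR_{\frac{r}{s}}(q)\equiv\flR_{\frac{r}{s+nr}}(q)$ for all $n\in\ZZ$. This follows because Proposition~\ref{basic for flat}~(3) gives $\flR_{\frac{r}{s}}(q)\equiv\flS_{\frac{s}{r}}(q^{-1})$, while $s+nr\equiv s\pmod r$ together with \eqref{flat r equiv r'} yields $\flS_{\frac{s+nr}{r}}=\flS_{\frac{s}{r}}$; one uses here that $\equiv$ is preserved under $q\mapsto q^{-1}$. After reducing to $r>0$ via Proposition~\ref{basic for flat}~(2) (and disposing of the trivial cases $r=0,\pm1$, where $\flR\equiv1$), I may replace $s,s'$ by representatives $\bar s,\bar s'\in(0,r)$, so that $\frac{r}{\bar s},\frac{r}{\bar s'}>1$ and $\bar s\bar s'\equiv ss'\equiv\pm1\pmod r$. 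Lemma~\ref{arithmetic of flat lem} then applies verbatim to $\frac{r}{\bar s},\frac{r}{\bar s'}$ (both the $+1$ and $-1$ cases), and chaining the $\equiv$'s gives part~(2).

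For part~(1), I would exploit the duality $\frac{r}{s}\leftrightarrow\frac{s}{r}$, under which the congruence $rr'\equiv1\pmod s$ becomes exactly the hypothesis of part~(2) for the flipped fractions $\frac{s}{r},\frac{s}{r'}$. Concretely, after reducing to $0<r,r'<s$ by \eqref{flat r equiv r'} (the case $s=1$ being trivial), Proposition~\ref{basic for flat}~(3) gives $\flS_{\frac{r}{s}}(q)\equiv\flR_{\frac{s}{r}}(q^{-1})$ and likewise for $r'$. Since $0<r,r'<s$ forces $\frac{s}{r},\frac{s}{r'}>1$, Lemma~\ref{arithmetic of flat lem} applied to these (using $rr'\equiv1\pmod s$) yields $\flR_{\frac{s}{r}}\equiv\flR_{\frac{s}{r'}}$, hence $\flS_{\frac{r}{s}}\equiv\flS_{\frac{r'}{s}}$. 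Because both $\flS_{\frac{r}{s}},\flS_{\frac{r'}{s}}\in\ZZ[q]$ have constant term $1$, the relation $\equiv$ upgrades to genuine equality, giving the $rr'\equiv1$ assertion. For the $rr'\equiv-1$ case I would choose $r''$ with $0<r''<s$ and $r''\equiv-r'\pmod s$; then $rr''\equiv1\pmod s$ while $r''+r'\equiv0\pmod s$, so combining the case just proved with Proposition~\ref{s+t=r}~(1) gives $\flS_{\frac{r}{s}}=\flS_{\frac{r''}{s}}=\flS_{\frac{r'}{s}}^\vee$.

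The main obstacle is the passage into the range $>1$ where Lemma~\ref{arithmetic of flat lem} (which itself rests on Schubert's theorem and the identity $J_\alpha=\flR_\alpha^\vee$ of Theorem~\ref{J-L}) is available: one must verify the shift invariance $\flR_{\frac{r}{s}}\equiv\flR_{\frac{r}{s+nr}}$ and check that the reciprocal and sign identities of Proposition~\ref{basic for flat} interact correctly with $\equiv$, with $(-)^\vee$, and with the normalization $\flS(0)=1$ that converts $\equiv$ into $=$ in part~(1). Once this bookkeeping is in place, the two arguments are short, and the only genuine input beyond elementary manipulations is Lemma~\ref{arithmetic of flat lem}.
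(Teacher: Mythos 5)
Your proposal is correct and follows essentially the same route as the paper: both reduce to Lemma~\ref{arithmetic of flat lem} by combining the integer-shift invariance \eqref{flat r equiv r'} with the flip $\frac{r}{s}\leftrightarrow\frac{s}{r}$ of Proposition~\ref{basic for flat}~(3), and then use the normalization $\flS_{\frac{r}{s}}(0)=1$ to upgrade $\equiv$ to genuine equality in part~(1). The only cosmetic differences are that you prove (2) directly (rather than deducing it from (1) as the paper does) and handle the $rr'\equiv-1$ subcase of (1) via Proposition~\ref{s+t=r}~(1) instead of the $-1$ case of the lemma; both variants are valid.
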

\begin{proof}

(1) By \eqref{flat r equiv r'}, we may assume that $0 < \frac{r}s, \frac{r'}s < 1$. Since we always have $\flS_{\frac{r}s}(q) \in \ZZ[q]$ with $\flS_{\frac{r}s}(0)=1$, the assertion follows from Lemma~\ref{arithmetic of flat lem} and 
Proposition~\ref{basic for flat} (3). 

(2) The assertion follows from (1) and Proposition~\ref{basic for flat} (3). 
\end{proof}

Using the above property of $\flR_{\frac{r}s}(q)$ and (orthogonal) $q$-transposes, we can get a new proof of the following impressive result on $\Tr A$ for $A \in \PSL_q(2,\ZZ)$.

\begin{thm}[{\cite[Theorem~3]{LM}}]\label{Tr A}
For $A \in \PSL_q(2,\ZZ)$, $\Tr A$ is equivalent to a palindromic polynomial whose coefficients are non-negative. 
\end{thm}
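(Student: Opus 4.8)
The plan is to package $\Tr A$ through the three symmetries that the $q$-transpose and orthogonal $q$-transpose provide: $\Tr A$ is invariant under conjugation in $\PSL_q(2,\ZZ)$ and under $\equiv$, it is unchanged by the $q$-transpose (so $\Tr(A^{T_q})=\Tr A$), and it is sent to its reflection by the orthogonal $q$-transpose (so $\Tr(A^{O_q})=(\Tr A)(q^{-1})$). First I would reduce to a good conjugacy representative. The elliptic and parabolic classes are represented, up to conjugation and $\equiv$, by the finitely many words $S_q$, $R_qS_q$ and $R_q^n$, whose traces $0$, $1$ and $q^n+1$ are visibly palindromic with non-negative coefficients; the substance is in the hyperbolic classes. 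Every hyperbolic class contains, up to cyclic rotation and the even-length normalization of Remark~\ref{odd length}, a positive word $M_q(a_1,\ldots,a_{2m})$ with all $a_i\ge 1$, attached to an irreducible fraction $\frac rs=[a_1,\ldots,a_{2m}]>1$.

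For the non-negativity I would reuse the explicit matrix form that already appears in the proof of Theorem~\ref{J-L}:
\[
M_q(a_1,\ldots,a_{2m})=\begin{pmatrix} q\shR_{\frac rs}(q) & \shR_{\frac tu}(q)\\ q\shS_{\frac rs}(q) & \shS_{\frac tu}(q)\end{pmatrix},\qquad M(a_1,\ldots,a_{2m})=\begin{pmatrix} r & t\\ s & u\end{pmatrix}\in\SL(2,\ZZ),
\]
so that $\Tr A\equiv q\shR_{\frac rs}(q)+\shS_{\frac tu}(q)$. Since $ru-st=1$ forces $st\equiv -1\pmod u$, Corollary~\ref{3rd proof} gives $\shS_{\frac tu}(q)=\shS_{\frac su}(q)$, and Lemma~\ref{reduced form} yields $0<u<s$, hence $\frac su>1$. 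Both $\shR_{\frac rs}(q)$ and $\shS_{\frac su}(q)$ are then closure polynomials by Theorem~\ref{theorem:MO closure}, so they lie in $\ZZ_{\ge 0}[q]$; therefore $\Tr A\equiv q\shR_{\frac rs}(q)+\shS_{\frac su}(q)\in\ZZ_{\ge 0}[q]$, with constant term $\shS_{\frac su}(0)=1$.

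For palindromicity I would show $(\Tr A)(q^{-1})\equiv \Tr A$, which is exactly palindromicity up to $\equiv$. Using $R_q^{O_q}\equiv R_q$, $L_q^{O_q}\equiv L_q$ and $(XY)^{O_q}=Y^{O_q}X^{O_q}$, the orthogonal $q$-transpose sends the positive word to its reversal, $A^{O_q}\equiv L_q^{a_{2m}}R_q^{a_{2m-1}}\cdots L_q^{a_2}R_q^{a_1}$, i.e.\ to the $q$-matrix of the reversed continued fraction $[a_{2m},\ldots,a_1]$; by the Remark after Lemma~\ref{arithmetic of flat lem} this is the fraction $\frac{r}{s'}$ with $ss'\equiv -1\pmod r$. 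Applying the trace formula of the previous paragraph to $A^{O_q}$ and then invoking the reciprocal relations — Theorem~\ref{arithmetic of flat} together with the sharp analogues recalled in the introduction and Corollary~\ref{3rd proof for R} — one identifies each diagonal contribution of $A^{O_q}$ with the reciprocal of the corresponding contribution of $A$, so that $\Tr(A^{O_q})\equiv (\Tr A)^\vee$. Since $\Tr(A^{O_q})=(\Tr A)(q^{-1})$, comparing the two descriptions gives $(\Tr A)(q^{-1})\equiv \Tr A$, and combined with the previous paragraph this is the assertion.

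The hard part will be this last step. The reversal symmetry obtained from $O_q$ (or from $T_q$) only reshuffles the \emph{off}-diagonal entries of the $M_q$-word and, via $\Tr(XY)=\Tr(YX)$, becomes transparent only when there are at most two $R_q$/$L_q$-blocks; for longer words reversal is genuinely distinct from cyclic permutation, so one cannot extract palindromicity from the trace identities alone. The real work is therefore to convert the reversal $[a_1,\ldots,a_{2m}]\mapsto[a_{2m},\ldots,a_1]$ into the single statement $\Tr(A^{O_q})\equiv(\Tr A)^\vee$ about the \emph{diagonal sum}, which requires matching the reciprocal relations for $\shR_{\frac rs}$ and for $\shS_{\frac su}$ separately and keeping careful track of the powers of $q$ and signs absorbed into each $\equiv$. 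A secondary point to verify is that the reduction to positive words in the first paragraph is exhaustive and interacts correctly with the even-length bookkeeping of Remark~\ref{odd length}.
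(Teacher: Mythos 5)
Your non-negativity argument is sound and genuinely different from the paper's. By passing to a positive-word conjugacy representative $M_q(a_1,\ldots,a_{2m})$ you read off $\Tr A\equiv q\shR_{\frac{r}{s}}(q)+\shS_{\frac{t}{u}}(q)$ as a \emph{sum} of two closure polynomials via Theorem~\ref{theorem:MO closure}, whereas the paper works with the \emph{difference} $\shR_{\frac{s}{u}}(q)-\shR_{\frac{t}{u}}(q)$ and must iterate $O_q$-reductions until the two terms have opposite sign patterns. Your route is cleaner, provided you justify (or cite) the classical fact that every hyperbolic element of $\PSL(2,\ZZ)$ is conjugate to a positive $R,L$-word; note the paper only recovers that normal form a posteriori, in Proposition~\ref{3 types for trace}, as a consequence of this very theorem, so you must import it from elsewhere rather than from the paper.

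The palindromicity argument, however, has a genuine gap, precisely where you locate ``the hard part.'' The chain you propose is circular: since $(\Tr A)^\vee=q^{N}(\Tr A)(q^{-1})$ for $N=\deg \Tr A$, the statement $\Tr(A^{O_q})\equiv(\Tr A)^\vee$ is \emph{identical} to the general identity $\Tr(A^{O_q})=(\Tr A)(q^{-1})$ and carries no information, so ``comparing the two descriptions'' cannot produce $(\Tr A)(q^{-1})\equiv\Tr A$. What you actually need is $\Tr(A^{O_q})\equiv\Tr A$, i.e.\ that the reversed word $L_q^{a_{2m}}\cdots R_q^{a_1}$ has the same $q$-trace as the original. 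The $q$-transpose only gives reversal \emph{combined with} the swap $R_q\leftrightarrow L_q$ for free (since $R_q^{T_q}\equiv L_q$ and $\Tr A^{T_q}=\Tr A$), so the missing statement is that the global swap $R_q\leftrightarrow L_q$ preserves the trace up to $\equiv$; classically this is conjugation by $\left(\begin{smallmatrix}0&1\\1&0\end{smallmatrix}\right)$, which has no counterpart in $\PSL_q(2,\ZZ)$, and one checks that this swap-invariance is \emph{equivalent} to the palindromicity being proved. The paper closes the gap with an input your sketch never reaches: the identity $\flR_{\frac{r}{s}}(q)=\flR_{\frac{r}{t}}(q)^\vee$ for $st\equiv-1\pmod{r}$ (Lemma~\ref{arithmetic of flat lem}), which rests on Schubert's classification of rational links and the Jones-polynomial formulas (Theorems~\ref{Schubert}, \ref{Jones}, \ref{J-L}); subtracting $\flR_{\frac{r}{s}}=q\shR_{\frac{r}{s}}+(1-q)\shS_{\frac{r}{t}}$ from $\flR_{\frac{r}{t}}=q\shR_{\frac{r}{s}}+(1-q)\shS_{\frac{r}{s}}$ then exhibits $(q-1)\Tr A$ as equivalent to an anti-palindromic polynomial, whence $\Tr A$ is palindromic up to $\equiv$. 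Your plan of matching ``each diagonal contribution with its reciprocal'' via the reciprocal relations for $\shR$ and $\shS$ separately will not assemble into $(\Tr A)^\vee\equiv\Tr A$, because the two diagonal entries have different degrees and the reciprocal of a sum is not the sum of the reciprocals; some single global identity of the above kind is required.
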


In the following, we say $f(q) \in \ZZ[q]$ is {\it anti-palindromic} if $f(q) = -f(q)^\vee$. 
\begin{proof}
In this proof, we treat  
$$M:=(-S_q)^{-1}A=\begin{pmatrix}
\cR(q) &   \cT(q)  \\
\cS(q) & \cU(q) \\
\end{pmatrix} \quad \text{with} \quad M|_{q=1}=\begin{pmatrix}
r & t \\
s & u
\end{pmatrix}
$$ 
rather than $A$ itself. Note that 
$$
M \equiv 
\begin{pmatrix}
q \shR_{\frac{r}s}(q) & \shS_{\frac{r}t}(q) \\
q \shS_{\frac{r}s}(q) & \cU(q) \\
\end{pmatrix}
\equiv 
\begin{pmatrix} 
\cR(q) & \shR_{\frac{t}u}(q) \\
q \shR_{\frac{s}u}(q) & \shS_{\frac{t}u}(q) \\
\end{pmatrix}
$$
by Proposition~\ref{column of a general element} and Corollary~\ref{row of a general element},  
$$A = (-S_q)M \equiv\begin{pmatrix}
\shS_{\frac{r}s}(q) &   q^{-1}\cU(q)  \\
-q\shR_{\frac{r}s}(q) & -\shS_{\frac{r}t}(q) \\
\end{pmatrix}
\equiv
\begin{pmatrix}
\shR_{\frac{s}u}(q) &   q^{-1}\shS_{\frac{t}u}(q)  \\
-\cR(q) & -\shR_{\frac{t}u}(q) \\
\end{pmatrix}$$
and $$\Tr A\equiv  \shS_{\frac{r}s}(q)-\shS_{\frac{r}t}(q)\equiv 
\shR_{\frac{s}u}(q)-\shR_{\frac{t}u}(q).$$
So we may assume that $s \ne t$. 

First, consider the case $u=0$. 
Then we have $(s,t)=(\pm 1,\mp 1)$, and  
$(\cS(q),\cT(q)) \equiv (q^n, -1)$ for some $n \in \ZZ$. 
Therefore $\Tr A \equiv \cS(q)-\cT(q)=q^n+1$. 
 So we may assume that $u \ne 0$. 

Replacing $A$ by either $-A$, $A^{O_q}$ or $-A^{O_q}$ , if necessary, 
we may assume that $u>0$ and $s > t$. By Proposition~\ref{basic for flat} (1) and Corollary~\ref{3rd proof} (note that $st \equiv -1 \pmod{u}$ now), we have 
\begin{align*}
\shR_{\frac{s}u+n}(q)-\shR_{\frac{t}{u}+n}(q) &= (q^n\shR_{\frac{s}u}(q)+[n]_q \cdot \shS_{\frac{s}u}(q))-  (q^n \shR_{\frac{t}u}(q)+[n]_q \cdot \shS_{\frac{t}u}(q)) \\
&= (q^n\shR_{\frac{s}u}(q)+[n]_q \cdot \shS_{\frac{s}u}(q))-  (q^n \shR_{\frac{t}u}(q)+[n]_q \cdot \shS_{\frac{s}u}(q)) \\
&= q^n(\shR_{\frac{s}u}(q) - \shR_{\frac{t}u}(q)) \equiv \Tr A. 
\end{align*}
We also remark that there is some $\cR'(q)\in \ZZ[q^{\pm 1}]$ such that 
$$
\begin{pmatrix}
\cR'(q) & \shR_{\frac{t}{u}+n}(q) \\
q \shR_{\frac{s}u+n}(q) & \cU(q) \\
\end{pmatrix} 
 \in \PSL_q(2,\ZZ). 
$$

So keeping $\Tr A$ up to $\equiv$, we may assume that $s,t > u>0$. 
More precisely, we replace $s$ (resp. $t$) by $s+nu$ (resp. $t+nu$)  for $n \gg 0$, if necessary. 
Then $r > s,t>u$, and 
$$M_q(a_1, \ldots, a_{2m})= 
\begin{pmatrix}
q \shR_{\frac{r}s}(q) & \shS_{\frac{r}t}(q) \\
q \shS_{\frac{r}s}(q) & \shS_{\frac{t}u}(q) \\
\end{pmatrix}$$ 
for $\frac{r}s=[a_1, \ldots, a_{2m}]>1$ by Lemma~\ref{reduced form}. 
Then $\frac{r}t=[a_{2m}, \ldots, a_1]>1$ and 
$$
M_q(a_{2m}, \ldots, a_1)= 
\begin{pmatrix}
q \shR_{\frac{r}s}(q) & \shS_{\frac{r}s}(q) \\
q \shS_{\frac{r}t}(q) & \shS_{\frac{t}{u}}(q) \\
\end{pmatrix}.  
$$
Hence
$$\flR_{\frac{r}s}(q)=q\shR_{\frac{r}s}+(1-q)\shS_{\frac{r}t} \quad \text{and} \quad \flR_{\frac{r}t}(q)=q\shR_{\frac{r}s}+(1-q)\shS_{\frac{r}s}.$$
Since $\flR_{\frac{r}s}(q) = \flR_{\frac{r}t}(q)^\vee$ by Lemma~\ref{arithmetic of flat} (note that $st \equiv -1 \pmod{r}$ now), 
\begin{equation}\label{s-t}
\flR_{\frac{r}s}(q) -\flR_{\frac{r}t}(q)=(q-1)(\shS_{\frac{r}s}(q) -\shS_{\frac{r}t}(q))
\end{equation}
is equivalent to an anti-palindromic polynomial. Since $(q-1)$ is also anti-palindromic,  $\shS_{\frac{r}s}(q) -\shS_{\frac{r}t}(q) \, (\equiv \Tr A)$ is  equivalent to a palindromic polynomial. 

Next we will show that all  non-zero coefficients of $\Tr A$ have the same sign. 
Using the above equation $\shR_{\frac{s}u+n}(q)-\shR_{\frac{t}{u}+n}(q) \equiv \shR_{\frac{s}u}(q)-\shR_{\frac{t}{u}}(q)$ for all $n \in \ZZ$, we may assume that $0 \ge t > -u$. Then all non-zero coefficients of $\shR_{\frac{t}u}(q)$ are negative. So if $s \ge 0$, all non-zero coefficients of $\shR_{\frac{s}u}(q)$ are positive, and the same is true for $\shR_{\frac{s}u}(q)-\shR_{\frac{t}{u}}(q) 
\, (\equiv \Tr A)$. 

It remains to show the case $0 > s > t> -u$. In this case, 
$|s|, |t| < u$. Since $ru-st=\pm 1$ now, we have $u > r> 0$.  
Since 
$$M^{O_q} \equiv 
\begin{pmatrix}
 \shS_{\frac{t}u}(q^{-1}) & q^{-1}\shR_{\frac{t}u}(q^{-1}) \\
\shR_{\frac{s}u}(q^{-1}) &  \cR(q^{-1})\\
\end{pmatrix}
\equiv \begin{pmatrix}
\cU'(q) &   \shR_{\frac{t}r}(q)  \\
q \shR_{\frac{s}r}(q) & \shS_{\frac{t}r}(q)  \\
\end{pmatrix} \text{ with }  M^{O_q}|_{q=1} = \begin{pmatrix}
u &   t \\
s &  r   \\
\end{pmatrix}, $$
if we set $\tau(q):=\Tr A$ then 
we have $$\tau(q^{-1})\equiv \shR_{\frac{s}u}(q^{-1})-\shR_{\frac{t}{u}}(q^{-1}) \equiv \shR_{\frac{s}r}(q)-\shR_{\frac{t}{r}}(q).$$ 
In this way, it suffices to show that all non-zero coefficients of $\shR_{\frac{s}r}(q)-\shR_{\frac{t}{r}}(q)$ have the same sign. So we  apply the above argument to $M^{O_q}.$ That is,  the reduction using $$\shR_{\frac{s}r+n}(q)-\shR_{\frac{t}{r}+n}(q) \equiv \shR_{\frac{s}r}(q)-\shR_{\frac{t}r}(q).$$  Next we take the orthogonal $q$-transpose again, if necessary. Repeating this procedure, we can reduce to the case $s > 0 > t$. 
\end{proof}

\begin{prop}\label{3 types for trace}  
For  $A \in \mathrm{PSL}_q(2,\ZZ)$, the trace $\Tr A$ is equivalent to one of the following polynomials.  
\begin{itemize}
\item[(1)] $1+q^n$ for some $n \in \ZZ_{\ge 0}$. 
\item[(2)] $[n]_q$ for some $n \in \ZZ_{\ge 0}$ $($note that $[0]_q=0$$)$. 
\item[(3)] 
$\Tr M_q(a_1, \ldots, a_{2m})$ for $a_1, \ldots, a_{2m} >0$. 
\end{itemize} 
\end{prop}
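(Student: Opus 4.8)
The plan is to leverage the fact that $\Tr$ is a conjugacy invariant up to $\equiv$ and then evaluate it on a short list of standard representatives. First I would record the invariance: if $A \equiv B_q C_q B_q^{-1}$ with $B_q \in \mathrm{GL}(2,\ZZ[q^{\pm 1}])$, then $\Tr A \equiv \Tr(B_q C_q B_q^{-1}) = \Tr C_q$ by the cyclic property of the trace, where $B_q^{-1}$ is the genuine matrix inverse (legitimate since $\det B_q \in \{\pm q^n\}$). Via the isomorphism $\mathrm{PSL}(2,\ZZ)\cong \mathrm{PSL}_q(2,\ZZ)$ of Proposition~\ref{def of q-rat}, any conjugacy relation in $\mathrm{PSL}(2,\ZZ)$ lifts to a relation of exactly this shape (the ambiguity in lifting $B_q^{-1}$ only multiplies by $\pm q^n$, which $\equiv$ absorbs). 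Hence it suffices to evaluate $\Tr$ on one representative of each conjugacy class of $\mathrm{PSL}(2,\ZZ)$.

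Next I would invoke the classical structure $\mathrm{PSL}(2,\ZZ)\cong \ZZ/2 * \ZZ/3$. For the torsion classes (including the identity): since torsion in a free product is conjugate into a factor, each such element is conjugate to $I$, to the order-two generator $S$, or to the order-three element $SR$ or its square. Evaluating the $q$-deformed trace on these representatives gives $\Tr I = 2 = 1+q^0$, $\Tr S_q = 0 = [0]_q$, and $\Tr(S_qR_q)=1$ together with $\Tr\bigl((S_qR_q)^2\bigr)=-1$, both $\equiv 1=[1]_q$; these fall into cases (1) and (2). The parabolic classes are conjugate to $R^n$ with $n\neq 0$, and $\Tr R_q^{\,n}=q^n+1\equiv 1+q^{|n|}$ places them in case (1).

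The genuinely substantial case, which I expect to be the main obstacle, is the hyperbolic one. Here I would use the classical correspondence between conjugacy classes of hyperbolic elements of $\mathrm{PSL}(2,\ZZ)$ and cyclic positive words: every hyperbolic element is conjugate to $R^{a_1}L^{a_2}\cdots R^{a_{2m-1}}L^{a_{2m}}=M(a_1,\ldots,a_{2m})$ with all $a_i>0$, obtained from the eventually periodic regular continued fraction of an axis endpoint by starting the period at an $R$-block (evenness of the length is forced by the alternation of $R$- and $L$-blocks around the cyclic word). By conjugation invariance this yields $\Tr A \equiv \Tr M_q(a_1,\ldots,a_{2m})$, which is precisely case (3). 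Making the hyperbolic normal form and the choice of even-length representative fully rigorous is the delicate point of the argument.

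Finally I would record the bookkeeping that makes the trichotomy exhaustive and consistent: evaluating at $q=1$, case (1) gives the value $2$, case (2) gives $n$ (with $n\in\{0,1\}$ for the torsion representatives that actually occur), and case (3) gives $\Tr M(a_1,\ldots,a_{2m})>2$, matching the classical split $|\Tr \overline{A}|=2$, $<2$, $>2$, so no class is missed. As a cross-check I would note that the two delicate auxiliary points are (a) that lifting a $\mathrm{PSL}(2,\ZZ)$-conjugacy to $\mathrm{PSL}_q(2,\ZZ)$ disturbs the trace only by a factor $\pm q^n$, and (b) the precise hyperbolic normal form above; moreover the degenerate cases (1) and (2) can alternatively be read off directly from the $s=t$ and $u=0$ branches in the proof of Theorem~\ref{Tr A}, which gives an independent verification of those two families.
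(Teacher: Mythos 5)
Your argument is correct, but it proceeds along a genuinely different route from the paper. The paper does not invoke the conjugacy classification of $\mathrm{PSL}(2,\ZZ)$ at all: it recycles the explicit reduction machinery from its proof of Theorem~\ref{Tr A}, namely writing $A=(-S_q)M$, expressing $\Tr A \equiv \shR_{\frac{s}u}(q)-\shR_{\frac{t}u}(q)$, and then normalizing the integer matrix $M|_{q=1}$ step by step using sign changes, the translation identity $\shR_{\frac{s}u+n}(q)-\shR_{\frac{t}{u}+n}(q)\equiv \shR_{\frac{s}u}(q)-\shR_{\frac{t}{u}}(q)$, the $q$-transpose, and the orthogonal $q$-transpose (the latter being legitimate only because $\Tr A$ has already been shown palindromic up to $\equiv$), until Lemma~\ref{reduced form} identifies $A^{T_q}$ with some $M_q(a_1,\ldots,a_{2m})$ with all $a_i>0$; the branches $u=0$, $s=t$, and $s=0$ or $t=0$ yield cases (1) and (2). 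Your proof instead rests on the cyclic invariance of the trace up to $\equiv$ together with the classical elliptic/parabolic/hyperbolic trichotomy and the normal forms $I$, $S$, $SR$, $(SR)^2$, $R^n$, and $M(a_1,\ldots,a_{2m})$. What your route buys is conceptual transparency — it makes visible that the three cases of the proposition track the type of $A|_{q=1}$, and it shows that in case (2) only $n\in\{0,1\}$ actually arises from torsion classes — at the cost of importing two external classical facts (conjugacy of torsion into the free factors of $\ZZ/2*\ZZ/3$, and the positive $RL$-word normal form for hyperbolic classes, which you correctly flag as the delicate point but do not prove). The paper's route is longer but self-contained and effectively algorithmic, and it comes essentially for free once Theorem~\ref{Tr A} has been proved; note also that the paper's reduction is not purely a conjugation argument, since the $q$-transpose is an anti-automorphism and the orthogonal $q$-transpose involves $q\mapsto q^{-1}$. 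Your cross-checks at $q=1$ and the identification of cases (1)--(2) with the $u=0$ and $s=t$ branches are sound.
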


\begin{proof}
We reuse the proof of Theorem~\ref{Tr A}. 
In the notation there, the case $u=0$ corresponds to (1). If $s=t$, then $\Tr A=0$. 
So, in that proof, assuming $u \ne 0$ and $s \ne t$, we reduced to the case
$$
M|_{q=1}=\begin{pmatrix}
r&t \\ s & u 
\end{pmatrix} \quad \text{and} \quad 
A^{T_q}|_{q=1}=\begin{pmatrix}
s&-r \\ u & -t 
\end{pmatrix}. 
$$
with $0 \ge t>-u$ and $s \ge 0$. If $s=0$ or $t=0$, then $\Tr A \equiv [n]_q$ for some $n \in \NN$. So it remains to consider the case $-u <t<0<s$. Now  we have  
$A^{T_q}=M_q(a_1, \ldots, a_{2m})$ for some $a_1 \ge 0$ and 
$a_2, \ldots, a_{2m}>0$ by Lemma~\ref{reduced form}. If $s>u$, then we have $a_1>1$ and $\Tr A = \Tr A^{T_q}$ is the type (3). 
So assume that $0< s<u$. Since $|s|, |t|<u$ now, we have $0<|r|<u$.  
Like the last step of the proof of Theorem~\ref{Tr A}, we replace $M$ by $M^{O_q}$ keeping $\Tr A$ up to $\equiv$.  And then we ``adjust" $M$ again to satisfy $0 \ge t>-u$ and $s \ge 0$. By repeating this procedure if necessary, we can reduce to the case $s >u$. 
\end{proof}

For a sequence of positive integers $a_1, \ldots, a_{2m}$, \Oguz and Ravichandran \cite{OR} defined the {\it circular fence poset} $\overline{Q}(a_1, \ldots, a_{2m})$, and \Oguz \cite{KO25} showed that 
$$\Tr (M_q(a_1, \ldots, a_{2m}))=\mathsf{rk}(\overline{Q}(a_1, \ldots, a_{2m}), q)$$
(\cite[Proposition~5.5]{KO25}), and hence 
$$\Tr (M_q(a_1, \ldots, a_{2m}))|_{q=0}=\mathsf{rk}(\overline{Q}(a_1, \ldots, a_{2m}), 0)=1.$$
She also raised the following conjecture. 
For a polynomial $f(q)=\sum_{i=0}^n a_i q^i \in \ZZ[q]$, we say $f(q)$ is {\it unimodal} if 
$a_0 \le \cdots \le a_m  \ge \cdots \ge a_n
$ for some $m$ with $0 \le m \le n$. 

\begin{conj}[{\cite[Conjecture~1.4]{KO25}}]\label{Oguz}
For any sequence $a_1, \ldots, a_{2m} >0$, the trace  $\Tr (M(a_1, \ldots, a_{2m}))$ is unimodal except for the cases  
$(a_1,a_2, \ldots, a_{2m})=(1,k,1,k)$ or $(k,1,k,1)$ for some $k$. ($m=2$ holds in these cases.)    
\end{conj}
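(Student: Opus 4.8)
The plan is to prove unimodality on the combinatorial side, using the identity $\Tr M_q(a_1,\dots,a_{2m})=\mathsf{rk}(\overline{Q}(a_1,\dots,a_{2m}),q)$ of \cite[Proposition~5.5]{KO25} together with Theorem~\ref{Tr A}. Since this trace is equivalent to a palindromic polynomial with constant term $1$, if we write $n=a_1+\cdots+a_{2m}$ (the number of vertices of $\overline{Q}$) and let $\rho_i$ be the number of order ideals of size $i$, then unimodality is equivalent to the single chain of inequalities $\rho_0\le\rho_1\le\cdots\le\rho_{\lfloor n/2\rfloor}$. The engine I would exploit is the Cayley--Hamilton trace identity for $2\times 2$ matrices: writing $t_a:=\Tr\!\big(R_q^{\,a}\,N\big)$ for a fixed $N\in\GL(2,\ZZ[q^{\pm 1}])$, one has $t_{a+1}=(q+1)\,t_a-q\,t_{a-1}$, because $\Tr R_q=q+1$ and $\det R_q=q$; the characteristic roots are $1$ and $q$, so $t_a=A+B\,q^{a}$ for some $A,B$ independent of $a$. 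The same computation with $L_q$ (where $\Tr L_q=1+q^{-1}$ and $\det L_q=q^{-1}$) shows that, up to $\equiv$, the trace is affine in a power of $q$ in every segment length $a_i$. This affine-in-$q^{a_i}$ structure is what I would use to run an induction on the segments.

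Concretely, I would induct on the number of segments $2m$, carrying not just the trace but the whole matrix $M_q(a_1,\dots,a_{2m})$, whose four entries are (up to sign and $\equiv$) rank polynomials of genuine linear fence posets by Theorems~\ref{theorem:MO closure} and~\ref{theorem:BBL closure}, and are therefore individually unimodal by the theorem of McConville, Sagan and Smyth. The point is that the trace is the sum of the two diagonal entries, and a sum of two palindromic unimodal polynomials is unimodal precisely when their peaks are suitably aligned; so the correct inductive hypothesis ought to be a statement about the \emph{locations of the peaks} of the four entries, and of the relevant differences $\shS_{\frac{r}s}-\shS_{\frac{r}t}$ appearing in the proof of Theorem~\ref{Tr A}, strong enough to be preserved under right multiplication by $R_q^{a}$ and $L_q^{a}$. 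Preservation is where the closed form $t_a=A+B\,q^{a}$ enters: enlarging a segment only adds higher-degree copies of the fixed polynomial $B$, so the evolution of the peak is governed by the comparison between $A$ and the advancing hump $B\,q^{a}$, which is exactly the quantity the inductive invariant should track.

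The exceptional families $(1,k,1,k)$ and $(k,1,k,1)$, which all have $m=2$, must surface as the base-case sequences on which the peak-alignment hypothesis just fails; for these the two diagonal peaks sit one apart rather than coinciding, producing the bimodal shape already visible in $\mathsf{rk}(\overline{Q}(1,1,1,1),q)=1+2q+q^2+2q^3+q^4$. I would verify the short sequences by hand, isolate the precise numerical reason the alignment fails there (equivalently, the reason the $\gamma$-vector acquires a negative entry, e.g.\ $\gamma_1=-2$ in the displayed example), and then show that appending any further segment restores alignment, so that no sequence of length $2m>4$ and no length-$4$ sequence outside the two listed families can fail. I expect the heart of the difficulty to be the formulation of the inductive invariant itself: one must find a peak-location (or interlacing) property of the linear-fence entries that is simultaneously preserved by both $R_q$- and $L_q$-multiplication and yet sharp enough to detect the $(1,k,1,k)$ obstruction and nothing else. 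A viable alternative, sidestepping the matrix induction, is to prove $\gamma$-positivity of $\mathsf{rk}(\overline{Q},q)$ directly by a valley-hopping action on order ideals, since $\gamma$-positivity forces unimodality for palindromic polynomials and the exceptional cases are exactly those in which a single $\gamma$-coefficient turns negative; but constructing and analyzing that action appears no easier than the invariant above.
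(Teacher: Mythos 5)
The statement you have attempted to prove is not a theorem of this paper: it is Conjecture~\ref{Oguz}, quoted verbatim from \cite[Conjecture~1.4]{KO25}, and it is open. The paper offers no proof to compare against; it only uses the conjecture as a hypothesis, via Proposition~\ref{3 types for trace}, to derive a further conjecture about the polynomials $I_\alpha(q)$. So the only question is whether your argument actually settles the problem, and it does not: it is a research plan in which every load-bearing step is deferred. You say the inductive hypothesis ``ought to be'' a statement about peak locations, that one ``must find'' an invariant preserved by right multiplication by $R_q^{a}$ and $L_q^{a}$, and you explicitly concede that the heart of the difficulty is formulating that invariant --- which you never do. The Cayley--Hamilton observation $t_{a+1}=(q+1)t_a-q\,t_{a-1}$, hence $t_a=A+Bq^a$, is correct but only restates the problem as controlling the unimodality of $A+Bq^a$ for complicated $A,B$; the assertion that a sum of two palindromic unimodal polynomials is unimodal ``precisely when their peaks are suitably aligned'' is not a theorem (and the diagonal entries of $M_q(a_1,\ldots,a_{2m})$ are neither palindromic nor of equal degree in general); and the claim that the families $(1,k,1,k)$, $(k,1,k,1)$ are exactly where alignment fails while ``appending any further segment restores alignment'' is the conjecture itself, restated. (A minor attribution point: unimodality for \emph{linear} fence posets was conjectured by McConville--Sagan--Smyth \cite{TBEC1} and proved by \Oguz and Ravichandran \cite{OR}; that is the theorem you would be invoking for the individual matrix entries.)

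Your fallback route via $\gamma$-positivity fails on the very first non-exceptional example. One computes $\Tr M_q(1,1)\equiv 1+q+q^2=\mathsf{rk}(\overline{Q}(1,1),q)$, whose $\gamma$-expansion is $(1+q)^2-q$, i.e.\ $\gamma_1=-1<0$, even though $(1,1)$ is not in the exceptional list and $1+q+q^2$ is perfectly unimodal. So it is false that negative $\gamma$-coefficients occur exactly in the exceptional cases: $\gamma$-positivity is strictly stronger than palindromic unimodality and is simply not a property these circular rank polynomials have. Any workable version of this strategy would need a genuinely weaker positivity certificate, which you do not propose. In short, neither route is carried out, one of them is demonstrably broken, and the conjecture remains open both in the paper and after your proposal.
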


We use the above observation to analyze the behavior of $\flR_{\frac{r}s}(q)$. 

\begin{lem}\label{Palin for flR}
For an irreducible fraction $\frac{r}s >1$, $\flR_{\frac{r}s}(q)$ is palindromic if and only if $s^2 \equiv -1 \pmod{r}$. 
\end{lem}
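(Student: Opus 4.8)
The plan is to show that palindromicity of $\flR_{\frac rs}(q)$ is equivalent, via two known reductions, to the coincidence of two \emph{right} denominator polynomials, at which point the desired congruence falls out by setting $q=1$. Throughout I assume $s\ge 2$, so that $\frac rs=[a_1,\ldots,a_{2m}]$ is non-integral; the integral case $s=1$ is dispatched directly from the explicit formula for $\flR_{\frac n1}(q)$, which is palindromic precisely when $r\le 2$, matching $1\equiv -1\pmod r$. First I would introduce the unique $t\in\{1,\ldots,r-1\}$ with $st\equiv -1\pmod r$; then $\gcd(t,r)=1$, $\frac rt>1$, and in fact $\frac rt=[a_{2m},\ldots,a_1]$ is the reversed continued fraction (the Remark following Lemma~\ref{arithmetic of flat lem}). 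By Lemma~\ref{arithmetic of flat lem} in the case $ss'\equiv -1$, applied with $s'=t$, we have $\flR_{\frac rs}(q)=\flR_{\frac rt}(q)^\vee$, so $\flR_{\frac rs}(q)$ is palindromic if and only if $\flR_{\frac rs}(q)=\flR_{\frac rt}(q)$.

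The decisive step is to replace the numerators $\flR$ by the right denominators $\shS$. Here I would invoke the computation carried out in the proof of Theorem~\ref{Tr A}: writing $M_q(a_1,\ldots,a_{2m})$ in the form pinned down by Lemma~\ref{reduced form} and comparing it with its $q$-transpose $M_q(a_{2m},\ldots,a_1)$, one obtains exactly the identity \eqref{s-t}, namely $\flR_{\frac rs}(q)-\flR_{\frac rt}(q)=(q-1)\bigl(\shS_{\frac rs}(q)-\shS_{\frac rt}(q)\bigr)$. Since $q-1$ is a non-zero-divisor in $\ZZ[q]$, this gives $\flR_{\frac rs}=\flR_{\frac rt}$ if and only if $\shS_{\frac rs}=\shS_{\frac rt}$. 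Now the arithmetic condition emerges for free by specializing $q=1$: because $\shS_{\frac rs}(1)=s$ and $\shS_{\frac rt}(1)=t$ with $0<s,t<r$, the equality $\shS_{\frac rs}=\shS_{\frac rt}$ forces $s=t$, and combined with $st\equiv -1\pmod r$ this is exactly $s^2\equiv -1\pmod r$. Conversely, if $s^2\equiv -1\pmod r$ then $t=s$, and the entire chain of equivalences runs backwards to yield palindromicity.

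The point of this route is that, whereas $s\mapsto\flR_{\frac rs}$ is only two-to-one (it cannot see the difference between $s$ and $t$, both giving numerator-value $r$ at $q=1$), passing to $\shS$ via \eqref{s-t} produces an invariant that separates $s$ from $t$ at $q=1$; this is what turns the seemingly hard implication ``palindromic $\Rightarrow$ congruence'' into a one-line specialization. I therefore expect the genuine work to be purely in justifying the intermediate identity in exactly the generality needed: \eqref{s-t} is derived inside the proof of Theorem~\ref{Tr A} after auxiliary reductions (replacing $s,t$ by $s+nu,t+nu$), so I must confirm that the $q$-transpose derivation of $\flR_{\frac rs}=q\shR_{\frac rs}+(1-q)\shS_{\frac rt}$ and $\flR_{\frac rt}=q\shR_{\frac rs}+(1-q)\shS_{\frac rs}$ is valid for our $\frac rs>1$ and its reverse with no extra hypotheses, and that these hold as genuine equalities in $\ZZ[q]$ rather than merely up to $\equiv$ (guaranteed by $\flR_{\frac rs}(0)=\shS_{\frac rs}(0)=1$). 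This bookkeeping is the main obstacle; once it is settled, the equivalence with $s^2\equiv -1\pmod r$ is immediate.
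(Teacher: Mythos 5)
Your proposal is correct and follows essentially the same route as the paper: the paper likewise combines Lemma~\ref{arithmetic of flat lem} (giving $\flR_{\frac{r}s}(q)^\vee=\flR_{\frac{r}t}(q)$ for the $t$ with $st\equiv-1\pmod r$) with the identity \eqref{s-t} from the proof of Theorem~\ref{Tr A}, and then specializes at $q=1$ to conclude that the difference vanishes iff $s=t$. Your extra bookkeeping (the $s=1$ case and the check that \eqref{s-t} holds in the required generality) is a harmless refinement of the same argument.
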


\begin{proof}
Set $\frac{r}s=[a_1, \ldots, a_{2m}]$ and $M(a_1, \ldots, a_{2m})=\begin{pmatrix}r& t\\s& u \end{pmatrix}$. Then we have  
$$\flR_{\frac{r}s}(q)-\flR_{\frac{r}s}(q)^\vee
=\flR_{\frac{r}s}(q)-\flR_{\frac{r}t}(q)=(q-1)(\shS_{\frac{r}s}(q) -\shS_{\frac{r}t}(q))$$
by \eqref{s-t}. 
Since $\shS_{\frac{r}s}(1)=s$ and $\shS_{\frac{r}t}(1)=t$, $\shS_{\frac{r}s}(q) -\shS_{\frac{r}t}(q)=0$ if and only if $s=t$.  The former condition is equivalent to that $\flR_{\frac{r}s}(q)$ is palindromic, and the latter condition is equivalent to $s^2 \equiv -1 \pmod{r}$. (Note that $0< s,t <r$ now.)
  
\end{proof}

\begin{thm}\label{palin}
The following hold. 
\begin{itemize}
\item[(1)] For an irreducible fraction $\frac{r}s$, $\flS_{\frac{r}s}(q)$ is palindromic 
if and only if $r^2 \equiv -1 \pmod{s}$; 
\item[(2)] For an irreducible fraction $\frac{r}s$, $\flR_{\frac{r}s}(q)$ is equivalent to a palindromic polynomial  if and only if $s^2 \equiv -1 \pmod{r}$;
\item[(3)] For an irreducible fraction $\frac{r}{s}>1$, the normalized Jones polynomial $J_{\frac{r}s}(q)$ is palindromic if and only if $s^2 \equiv -1 \pmod{r}$.
\end{itemize}
\end{thm}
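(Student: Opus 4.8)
The plan is to establish (2) first, since it reduces directly to Lemma~\ref{Palin for flR}, and then to deduce (1) from (2) via the inversion rule and (3) from the Jones-polynomial description. The organizing observation is that the predicate ``$f(q)$ is equivalent to a palindromic polynomial'' depends only on the $\equiv$-class of $f$ and is preserved by $(-)^\vee$: if $f \equiv g$ with $g = g^\vee$, then $f^\vee \equiv g^\vee = g$ as well. Dually, the arithmetic condition $s^2 \equiv -1 \pmod r$ is unchanged when $s$ is replaced by any $s'$ with $ss' \equiv \pm 1 \pmod r$ (then $s'^2 \equiv (s^2)^{-1}$, and $x \equiv -1 \Leftrightarrow x^{-1} \equiv -1$), and also when $r$ is replaced by $-r$. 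These two invariances match exactly the moves supplied by Theorem~\ref{arithmetic of flat}~(2) and Proposition~\ref{basic for flat}~(2).

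For (2), I would first use Proposition~\ref{basic for flat}~(2), which gives $\flR_{\frac{-r}s}(q) \equiv \flR_{\frac rs}(q)^\vee$, to reduce to the case $r > 0$; the cases $r = 0$ and $r = 1$ are degenerate (here $\flR_{\frac 1s}(q) \equiv 1$ and $\flR_{\frac 01}(q) \equiv q - 1$) and can be checked by hand against the congruence. When $r > 1$, choose the representative $0 < s' < r$ of either $s^{-1}$ or $-s^{-1}$ modulo $r$; then $\frac r{s'} > 1$, and Theorem~\ref{arithmetic of flat}~(2) gives $\flR_{\frac rs}(q) \equiv \flR_{\frac r{s'}}(q)$ or $\flR_{\frac rs}(q) \equiv \flR_{\frac r{s'}}(q)^\vee$. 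By the invariances above, $\flR_{\frac rs}$ is equivalent to a palindromic polynomial iff $\flR_{\frac r{s'}}$ is, and $s^2 \equiv -1 \pmod r$ iff $s'^2 \equiv -1 \pmod r$, so I may assume $\frac rs > 1$. In that case $\flR_{\frac rs}(q) \in \ZZ[q]$ with $\flR_{\frac rs}(0) = 1$, so ``equivalent to palindromic'' upgrades to genuine palindromicity (a nonzero constant term forces the scaling factor $\pm q^n$ to be $1$), and Lemma~\ref{Palin for flR} finishes the equivalence with $s^2 \equiv -1 \pmod r$.

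For (1), apply the inversion rule Proposition~\ref{basic for flat}~(3), which yields $\flS_{\frac rs}(q) \equiv \flR_{\frac sr}(q^{-1}) \equiv \flR_{\frac sr}(q)^\vee$. Since $\flS_{\frac rs}(q) \in \ZZ[q]$ has $\flS_{\frac rs}(0) = 1$, it is palindromic iff it is merely equivalent to a palindromic polynomial, hence (by the $\vee$- and $\equiv$-invariance) iff $\flR_{\frac sr}$ is equivalent to a palindromic polynomial. Applying (2) to the fraction $\frac sr$ (numerator and denominator exchanged), this holds iff $r^2 \equiv -1 \pmod s$, which is (1). Finally (3) is immediate: for $\frac rs > 1$, Theorem~\ref{J-L} gives $J_{\frac rs}(q) = \flR_{\frac rs}(q)^\vee$, so $J_{\frac rs}$ is palindromic iff $\flR_{\frac rs}$ is, and by Lemma~\ref{Palin for flR} this is iff $s^2 \equiv -1 \pmod r$.

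The main obstacle is the bookkeeping in (2): one must verify carefully that both the target property and the arithmetic congruence are genuinely invariant under the reduction moves, and then pass from the ``$\equiv$'' statements of Theorem~\ref{arithmetic of flat}~(2) to honest equalities once $\frac rs > 1$. Everything else is formal once the two invariances are in place; the small-$r$ degenerate cases require a separate but routine check.
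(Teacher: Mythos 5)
Your proof is correct and rests on essentially the same ingredients as the paper's: Lemma~\ref{Palin for flR} as the core computation, the inversion rule of Proposition~\ref{basic for flat}~(3) to pass between $\flS_{\frac{r}s}$ and $\flR_{\frac{s}r}$, and the normalization $\flS_{\frac{r}s}(0)=1$ (resp.\ $\flR_{\frac{r}s}(0)=1$ for $\frac{r}s>1$) to upgrade ``equivalent to a palindromic polynomial'' to genuine palindromicity. The only difference is the order of operations: you establish (2) first by reducing to $\frac{r}s>1$ via Theorem~\ref{arithmetic of flat}~(2) and then deduce (1) by inversion, whereas the paper establishes (1) first by reducing to $0<\frac{r}s<1$ via translation invariance and then deduces (2) by inversion --- a cosmetic reordering.
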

\begin{proof}
(1) By Proposition~\ref{basic for flat} (1), we may assume that $0 < \frac{r}s, \frac{r'}s < 1$.  Now the assertion follows from Lemma~\ref{Palin for flR} and Proposition~\ref{basic for flat} (3). 

(2) The assertion follows from (1) and  Proposition~\ref{basic for flat} (3).

(3) The assertion follows from Theorem~\ref{J-L} and Lemma~\ref{Palin for flR}.
\end{proof}

Let $\frac{r}s=[a_1, \ldots, a_{2m}]>1$ be an irreducible fraction with 
\begin{equation}\label{M & A}
M:=M_q(a_1, \ldots, a_{2m})=\begin{pmatrix}
r & t \\
s &  u\\
\end{pmatrix} \quad \text{and} \quad 
A:=(-S_q)M. 
\end{equation}

By our proof of Theorem~\ref{Tr A}, we have 
\begin{equation}\label{trace and Jones}
(q-1)\Tr A = \flR_{\frac{r}s}(q)-\flR_{\frac{r}s}(q)^\vee=J_{\frac{r}s}(q)^\vee-J_{\frac{r}s}(q).
\end{equation}
For an irreducible fraction $\alpha=\frac{r}s >1$ with $s^2 \not \equiv -1 \pmod{r}$, $J_\alpha(q)$ is not palindromic by Theorem~\ref{palin} (3), and $J_\alpha(q)^\vee-J_\alpha(q)$ is equivalent to a non-zero anti-palindromic polynomial which can be divided by $(1-q)$. So there is a unique palindromic polynomial $I_\alpha(q) \in \ZZ[q]$ with $I_\alpha(0) > 0$ and 
$$I_\alpha(q) \equiv \frac{J_\alpha(q)^\vee-J_\alpha(q)}{1-q}.$$

If $J_\alpha(q)$ is palindromic, we set $I_\alpha(q)=0$.  
By \eqref{trace and Jones}, we have $I_\alpha(q) \equiv \Tr A$. 

\begin{cor}
With the above notation, we have $I_\alpha(q) \in \ZZ_{\ge 0}[q]$ and $I_\alpha(0)=1$.   
\end{cor}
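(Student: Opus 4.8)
The plan is to read everything off the identity $I_\alpha(q)\equiv \Tr A$ established just before the statement, where $A=(-S_q)M$ as in \eqref{M & A}. First I would settle non-negativity. By Theorem~\ref{Tr A}, $\Tr A$ is equivalent to a palindromic polynomial with non-negative coefficients; since a palindromic polynomial has nonzero constant term, two palindromic representatives of a single $\equiv$-class with positive constant term must coincide (if $f=\pm q^n g$ with $f,g$ palindromic and $f(0),g(0)>0$, then $n<0$ is impossible because $g(0)\neq 0$ and $n>0$ forces $f(0)=0$, so $n=0$ and the sign is $+$). As $I_\alpha$ is by definition the palindromic representative of its class with $I_\alpha(0)>0$, it equals that non-negative representative, whence $I_\alpha\in\ZZ_{\ge 0}[q]$.

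For the normalization $I_\alpha(0)=1$ I would compute the lowest-degree coefficient of $\Tr A$ in a convenient representative. Following the reduction in the proof of Theorem~\ref{Tr A}, after replacing $A$ by a suitable one of $\pm A,\pm A^{O_q}$ and shifting $s,t$ by multiples of $u$ (operations that preserve $\Tr A$ up to $\equiv$), we may assume $u>0$, $s\ge 0$ and $-u<t\le 0$, so that $\Tr A\equiv \shR_{\frac{s}{u}}(q)-\shR_{\frac{t}{u}}(q)$ is sign coherent. The key input is then the statement that $\shR_{\beta}(q)$ has lowest-degree coefficient $1$ for every $\beta>0$, while for $\beta\in(-1,0)$ its lowest-degree term is exactly $-q^{-1}$; both follow from the recursion $[\beta]^\sharp_q=q^{-1}[\beta+1]^\sharp_q+[-1]_q$ together with $\shR_{\gamma}(0)=0$ for $\gamma\in(0,1)$ and $\shS_{\gamma}(0)=1$. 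Since $\shR_{\frac{s}{u}}$ involves only non-negative powers of $q$, the unique lowest-degree term of the difference is the $+q^{-1}$ coming from $-\shR_{\frac{t}{u}}$ when $t<0$, or the lowest term of $\shR_{\frac{s}{u}}$ (coefficient $1$) when $t=0$. In either case the extreme coefficient of $\Tr A$ is $\pm 1$, and multiplying by the power of $q$ that makes $I_\alpha$ a genuine polynomial with positive constant term yields $I_\alpha(0)=1$.

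Alternatively, and more in the spirit of the surrounding text, I would invoke Proposition~\ref{3 types for trace}: in the non-palindromic case $\Tr A\neq 0$, so $\Tr A$ is equivalent to $[n]_q$ with $n\ge 1$ (constant term $1$), to $\Tr M_q(a_1,\ldots,a_{2m})$ with all $a_i>0$ (constant term $1$, by the identity $\Tr M_q(a_1,\ldots,a_{2m})|_{q=0}=1$ recorded before Conjecture~\ref{Oguz}), or to $1+q^n$. Among these, the only value whose palindromic positive-constant-term representative has constant term $\neq 1$ is $1+q^0=2$, that is, type~(1) with $n=0$.

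The hard part is exactly excluding this degenerate value $\Tr A\equiv 2$. I expect to rule it out via the coefficient computation of the second paragraph: $\Tr A\equiv 2$ would force $\shR_{\frac{s}{u}}-\shR_{\frac{t}{u}}$ (equivalently $\shS_{\frac{r}{s}}-\shS_{\frac{r}{t}}$) to be a single monomial with coefficient $\pm 2$, contradicting the fact that its extreme coefficient is $\pm 1$. Establishing that extreme-coefficient statement cleanly — either as the lemma on the lowest-degree coefficients of $\shR_\beta$, or equivalently as the assertion that the first coefficient in which $\flR_\alpha$ (hence $J_\alpha$, by Theorem~\ref{J-L}) fails to be palindromic equals $\pm 1$ — is where the real work lies; the remainder is bookkeeping with the reductions of Theorem~\ref{Tr A}.
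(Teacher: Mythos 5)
Your proposal is correct in substance, and your treatment of non-negativity (Theorem~\ref{Tr A} plus the observation that an $\equiv$-class contains at most one palindromic polynomial with positive constant term) is what the paper does implicitly. Where you genuinely diverge is in pinning down $I_\alpha(0)=1$, i.e.\ in excluding the value $\Tr A\equiv 2$. You propose to compute the extreme coefficient of $\shR_{\frac{s}{u}}(q)-\shR_{\frac{t}{u}}(q)$ in the reduced position $u>0$, $s\ge 0$, $-u<t\le 0$ from the proof of Theorem~\ref{Tr A}, using that $\shR_\beta$ has lowest term exactly $-q^{-1}$ for $\beta\in(-1,0)$ while $\shR_\beta\in\ZZ[q]$ for $\beta\ge 0$; this does work (for $t<0$ the unique degree $-1$ term of the difference is $+q^{-1}$, and $t=0$ forces $u=1$ and $\Tr A\equiv[s]_q$), but it obliges you to prove the lowest-coefficient lemma, which you rightly identify as the real work, and to note that $\shR_\gamma\in q\,\ZZ[q]$ for $\gamma\in(0,1)$, not merely $\shR_\gamma(0)=0$. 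The paper sidesteps all of this: by Proposition~\ref{3 types for trace} the only admissible value whose palindromic positive representative has constant term $\ne 1$ is $2=1+q^0$, and $\Tr A\equiv 2$ forces $A\equiv\mathrm{Id}$ by the case analysis in the proof of Theorem~\ref{Tr A}, hence $M=(-S_q)^{-1}A\equiv S_q$, which is impossible since $M=M_q(a_1,\ldots,a_{2m})$ with all $a_i>0$ (its specialization at $q=1$ has all entries positive). So the paper's exclusion of $2$ is a one-line structural argument, whereas yours is a coefficient computation; both are valid, and yours has the side benefit of bypassing the circular fence poset identity $\Tr M_q(a_1,\ldots,a_{2m})|_{q=0}=1$ that the paper's route quietly relies on for type (3).
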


\begin{proof}
By \eqref{trace and Jones}, the assertion follows from Proposition~\ref{3 types for trace}, 
while it remains to show that $I_\alpha(q) \ne 2$. 
By the proof of Theorem~\ref{Tr A}, we see that if $\Tr A \equiv 2$ then $A$ is equivalent to the identity matrix. In this case, we have $M \equiv S_q$, but it is not possible since $M=M_q(a_1, \ldots, a_{2m})$  for $a_1, \ldots, a_{2m} >0$. \end{proof}

By Proposition~\ref{3 types for trace},  Conjecture~\ref{Oguz} implies the following conjecture on the normalized Jones polynomials of rational links (equivalently, left $q$-deformed rationals).

\begin{conj}
$I_\alpha(q)$ is unimodal except for the following two types.  
\begin{itemize}
\item[(1)]  $1+q^n$ for some $n \ge 2$. 
\item[(2)] $\sum_{i=0}^{2k}a_iq^i$ with 
$(a_0, \ldots, a_{2k+1})=(1,2,\ldots, k, k-1, k, k-1, k-2, \ldots, 2,1)$ for some $k \ge 2$.  
\end{itemize}
Especially, $I_\alpha(q)$ is at most bimodal. 
\end{conj}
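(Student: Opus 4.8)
The plan is to prove the stated conditional: \emph{assuming} Conjecture~\ref{Oguz}, the displayed dichotomy for $I_\alpha(q)$ holds. The starting point is the identity $I_\alpha(q)\equiv\Tr A$ recorded just above together with Proposition~\ref{3 types for trace}, which asserts that $\Tr A$, and hence $I_\alpha(q)$, is equivalent to exactly one of $1+q^n$, $[n]_q$, or $\Tr M_q(b_1,\dots,b_{2\ell})$ with all $b_i>0$. Since $I_\alpha(q)\in\ZZ_{\ge 0}[q]$ with $I_\alpha(0)=1$ by the preceding Corollary, in each of the three types I would pass to the unique representative with constant term $1$ and nonnegative coefficients, and then analyze its coefficient sequence.

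First I would dispose of the two ``small'' types. For $[n]_q=1+q+\cdots+q^{n-1}$ every coefficient equals $1$, so $I_\alpha(q)$ is (weakly) unimodal. For $1+q^n$, the constraints $I_\alpha(0)=1$ and $I_\alpha(q)\ne 2$ (both from the preceding Corollary) force $n\ge 1$: if $n=1$ then $1+q$ is unimodal, and if $n\ge 2$ we are in exception~(1). The whole content is thus in type~(3), where by the identification $\Tr M_q(b_1,\dots,b_{2\ell})=\mathsf{rk}(\overline{Q}(b_1,\dots,b_{2\ell}),q)$ of \cite[Proposition~5.5]{KO25} the distinguished representative is the circular--fence rank polynomial, whose value at $q=0$ is $1$. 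Conjecture~\ref{Oguz} then yields unimodality outright unless $(b_1,\dots,b_{2\ell})=(1,k,1,k)$ or $(k,1,k,1)$.

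The heart of the argument is therefore the explicit evaluation of the trace in the two exceptional families. I would set $P:=R_qL_q^{k}$ and compute, using $R_q^{a}=\bigl(\begin{smallmatrix} q^{a} & [a]_q \\ 0 & 1\end{smallmatrix}\bigr)$ and $L_q^{k}=\bigl(\begin{smallmatrix} 1 & 0 \\ \ast & q^{-k}\end{smallmatrix}\bigr)$, that $\Tr P=q+\sum_{j=0}^{k}q^{-j}=q^{-k}[k+2]_q$ and $\det P=q^{1-k}$. Since $M_q(1,k,1,k)=P^{2}$, the identity $\Tr(P^{2})=(\Tr P)^{2}-2\det P$ gives, after clearing the factor $q^{-2k}$,
\[
\Tr M_q(1,k,1,k)\equiv [k+2]_q^{\,2}-2q^{k+1}.
\]
Expanding $[k+2]_q^{\,2}$ produces the coefficient sequence $1,2,\dots,k+1,k+2,k+1,\dots,2,1$, and subtracting $2q^{k+1}$ replaces the central entry $k+2$ by $k$, giving exactly the sequence of exception~(2) (with the parameter there equal to $k+1$); its value at $q=0$ is $1$, so this polynomial \emph{is} $I_\alpha(q)$. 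The case $(k,1,k,1)$ follows from the symmetry $\Tr M_q(a_1,\dots,a_{2m})\equiv\Tr M_q(a_{2m},\dots,a_1)$, a consequence of $\Tr(A^{T_q})=\Tr A$ together with $R_q^{T_q}\equiv L_q$. Finally, each of the three types is either unimodal or one of the two exceptional shapes, and both exceptional shapes have exactly two peaks, so $I_\alpha(q)$ is at most bimodal in every case.

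The step I expect to be the main obstacle is the bookkeeping of the normalization. Proposition~\ref{3 types for trace} controls $\Tr A$ only up to the equivalence $\equiv$, so in each type one must verify that the polynomial whose coefficients are being examined is genuinely the representative with constant term $1$ and nonnegative coefficients, rather than some $\pm q^{N}$-shift of it. This is immediate for $[n]_q$ and for the rank-polynomial representative in type~(3), but in the exceptional computation it requires confirming that the Laurent factor is precisely $q^{-2k}$ and that no sign ambiguity survives; the determinant/trace identity above is exactly what pins this down.
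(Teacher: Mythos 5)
Your argument is essentially the paper's own: the paper likewise presents this conjecture as a consequence of Conjecture~\ref{Oguz} via Proposition~\ref{3 types for trace} together with the identification $\Tr M_q(b_1,\dots,b_{2\ell})=\mathsf{rk}(\overline{Q}(b_1,\dots,b_{2\ell}),q)$, merely citing \cite[Example~1.8]{OR} for the exceptional traces $\Tr M_q(1,k,1,k)$ that you instead compute directly (and correctly) via the Cayley--Hamilton identity $\Tr(P^2)=(\Tr P)^2-2\det P$. Your normalization bookkeeping and the parameter shift matching your central coefficient $k$ to the paper's exception~(2) with parameter $k+1$ are both right, so there is no gap.
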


The case (2) (resp. (3)) corresponds to the case 
$$A \equiv R^n \quad  \text{( resp. $A=M_q(1,k,1,k), M_q(k,1,k,1)$ )}.$$ Note that $\Tr M_q(1,k,1,k) (=\Tr M_q(k,1,k,1))$ is computed in  
\cite[Example 1.8.]{OR}.  

\begin{exam}

Easy calculation shows that $I_{\frac{12}{5}}(q)$ and $I_{\frac{15}{4}}(q)$ are not unimodal. In fact, 
$I_{\frac{12}{5}}(q)=I_{\frac{12}{7}}(q)=q^{3}+1$ and 
 $I_{\frac{15}{4}}(q)=I_{\frac{15}{11}}(q)=q^{4}+2 q^{3}+q^{2}+2 q^{}+1.$

\end{exam}

Finally, we give the following proposition.

\begin{prop}
For any irreducible fraction $\alpha >1$, there is an infinite sequence of irreducible fractions $\alpha_1, \alpha_2, \ldots $$($every $\alpha_i>1$ are distinct$)$ such that $I_{\alpha_i}(q)=I_{\alpha}(q)$ for all $i \ge 1$.
\end{prop}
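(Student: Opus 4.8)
The plan is to exploit the reduction that already appears inside the proof of Theorem~\ref{Tr A}: there the difference $\shR_{\frac{s}{u}+n}(q)-\shR_{\frac{t}{u}+n}(q)$ is shown to be equivalent to $\Tr A$ for \emph{every} $n$, and the key new observation is that, as $n$ varies, each such shifted pair is the bottom data of the reduced matrix of a genuinely different irreducible fraction $\alpha_n>1$. So the family will be produced by ``sliding'' this reduction index, not by appealing to Schubert's theorem (which, for fixed $r$, only yields finitely many fractions).

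Concretely, I would start from $\alpha=\frac{r}{s}=[a_1,\dots,a_{2m}]>1$, put $M:=M_q(a_1,\dots,a_{2m})$ and $A:=(-S_q)M$, so that $I_\alpha\equiv \Tr A$ by \eqref{trace and Jones}. By Lemma~\ref{reduced form} one has $M|_{q=1}=\begin{pmatrix} r& t\\ s& u\end{pmatrix}$ with $0<u<s$ and $ru-st=1$; in particular $u\ge 1$ and $st\equiv -1\pmod u$. As recorded in the proof of Theorem~\ref{Tr A}, $\Tr A\equiv \shR_{\frac{s}{u}}(q)-\shR_{\frac{t}{u}}(q)$, and since $\shS_{\frac{s}{u}}=\shS_{\frac{t}{u}}$ by Corollary~\ref{3rd proof},
$$\shR_{\frac{s}{u}+n}(q)-\shR_{\frac{t}{u}+n}(q)\equiv \shR_{\frac{s}{u}}(q)-\shR_{\frac{t}{u}}(q)\equiv I_\alpha\qquad(n\in\ZZ).$$

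For $n\ge 1$ I would then set $\sigma_n:=s+nu$ and $\rho_n:=\frac{(s+nu)(t+nu)+1}{u}$. The congruence $(s+nu)(t+nu)\equiv st\equiv -1\pmod u$ guarantees $\rho_n\in\ZZ$, and the identity $\rho_n u-(t+nu)\sigma_n=1$ shows both that $\gcd(\rho_n,\sigma_n)=1$ and that the integral matrix $\begin{pmatrix}\rho_n & t+nu\\ \sigma_n & u\end{pmatrix}$ lies in $\SL(2,\ZZ)$. Because $0<u<\sigma_n$ (as $s,u\ge 1$, $n\ge1$), Lemma~\ref{reduced form} tells us this matrix is exactly the reduced matrix of $\alpha_n:=\frac{\rho_n}{\sigma_n}=[b_1,\dots,b_{2\ell}]$, so that $M_{\alpha_n}:=M_q(b_1,\dots,b_{2\ell})$ satisfies $M_{\alpha_n}|_{q=1}=\begin{pmatrix}\rho_n & t+nu\\ \sigma_n & u\end{pmatrix}$; moreover $\alpha_n>1$ once $n$ is large, since $\rho_n$ is quadratic and $\sigma_n$ linear in $n$. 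Reading the trace identity for $M_{\alpha_n}$ (with $s,t,u$ replaced by $\sigma_n,\,t+nu,\,u$) gives
$$I_{\alpha_n}\equiv \Tr\big((-S_q)M_{\alpha_n}\big)\equiv \shR_{\frac{\sigma_n}{u}}(q)-\shR_{\frac{t+nu}{u}}(q)=\shR_{\frac{s}{u}+n}(q)-\shR_{\frac{t}{u}+n}(q)\equiv I_\alpha.$$
Since both $I_{\alpha_n}$ and $I_\alpha$ are palindromic, lie in $\ZZ_{\ge 0}[q]$, and have constant term $1$ (or are both $0$) by the preceding corollary, the equivalence $\equiv$ upgrades to an equality $I_{\alpha_n}=I_\alpha$. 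As the denominators $\sigma_n=s+nu$ strictly increase, the $\alpha_n$ are pairwise distinct, giving the required infinite sequence.

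I expect the main obstacle to be the bookkeeping in the third paragraph: confirming that the shifted data $(s+nu,\,t+nu)$ over the common denominator $u$ is genuinely realized as the reduced matrix of one irreducible fraction $\alpha_n>1$. This is exactly where the existence of the integer $\rho_n$ (hence the hypothesis $st\equiv -1\pmod u$, equivalently $\det M=1$) and the precise form of Lemma~\ref{reduced form} — which forces the off-diagonal entry to be $t+nu$ so that the trace formula can be re-read verbatim for $M_{\alpha_n}$ — are essential. A smaller but necessary point is the final normalization turning $I_{\alpha_n}\equiv I_\alpha$ into $I_{\alpha_n}=I_\alpha$, which relies on $I(0)=1$ and palindromicity from the preceding corollary; note that in the palindromic case $s=t$ (i.e.\ $I_\alpha=0$) the same construction still applies and recovers the explicit family $\frac{(n+2)^2+1}{\,n+2\,}$.
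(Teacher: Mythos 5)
Your proposal is correct and takes essentially the same route as the paper: both exploit the shift invariance $\shR_{\frac{s}{u}+n}(q)-\shR_{\frac{t}{u}+n}(q)\equiv \shR_{\frac{s}{u}}(q)-\shR_{\frac{t}{u}}(q)$ from the proof of Theorem~\ref{Tr A} and realize the shifted data as the reduced matrix of a new fraction with the same $I$-polynomial. Indeed your $\rho_n=\bigl((s+nu)(t+nu)+1\bigr)/u$ simplifies (using $ru-st=1$) to $r+n(s+t)+n^2u$, so your sequence $\alpha_n=\rho_n/\sigma_n$ coincides with the paper's $\alpha_i=\frac{r+i(s+t)+i^2u}{s+iu}$, obtained there by iterating the single step $M\mapsto RML$.
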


\begin{proof}
    With the same notation as \eqref{M & A},
we have $$I_\alpha(q)
\equiv \shS_{\frac{r}s}(q) -\shS_{\frac{r}t}(q)=\shR_{\frac{s}u}(q)-\shR_{\frac{t}u}(q)$$
by \eqref{s-t}. For $\alpha_1:= \dfrac{r+s+t+u}{s+u}=[b_1, \ldots, b_{2l}]>1$, we have 
$$M(b_1, \ldots, b_{2l})= \begin{pmatrix}
r+s+t+u & t+u \\
s+u &  u\\
\end{pmatrix}$$
and 
$$I_{\alpha_1}(q)\equiv \shR_{\frac{s+u}u}(q)-\shR_{\frac{t+u}u}(q)\equiv \shR_{\frac{s}u}(q)-\shR_{\frac{t}u}(q)\equiv I_\alpha(q).$$
Hence $I_{\alpha_1}(q)=I_\alpha(q)$. 
Repeating this procedure, we get the sequence $\{ \alpha_i \}_{i \ge 1}$ with 
$$\alpha_i=\frac{r+i(s+t)+i^2u}{s+iu},$$
which satisfies the desired property.  
\end{proof}



\section*{Acknowledgments}

We would like to thank Professor Michihisa Wakui for helpful comments. 
The first author is supported by JSPS KAKENHI Grant Number 21H04994, 19K03456. The second author is supported by JSPS KAKENHI Grant Number 22K03258.

\end{document}